\let\oldmarginpar\marginpar
\renewcommand\marginpar[1]{\-\oldmarginpar[\raggedleft\footnotesize #1]%
{\raggedright\footnotesize #1}}
\newtheorem{theorem}{Theorem}
\newtheorem{lemma}{Lemma}
\newtheorem{corollary}{Corollary}
\theoremstyle{definition}
\newtheorem{definition}{Definition}
\theoremstyle{remark}
\newtheorem{remark}{Remark}
\newcommand{\Z}{\mathbb{Z}}
\newcommand{\D}{\mathbb{D}}
\newcommand{\abs}[1]{|#1|}
\newcommand{\Abs}[1]{\left|#1\right|}
\newcommand{\Norm}[1]{\left\|#1\right\|}
\newcommand{\set}[1]{\big\{#1\big\}}
\newcommand{\N}{\mathbb{N}}
\newcommand{\R}{\mathbb{R}}
\newcommand{\C}{\mathbb{C}}
\newcommand{\Q}{\mathbb{Q}}
\def\L{\Lambda}
\def\l{\lambda}
\def\N{\mathbb{N}}
\def\Z{\mathbb{Z}}
\def\Q{\mathbb{Q}}
\def\R{\mathbb{R}}
\def\C{\mathbb{C}}
\def\F{\mathcal{F}}
\def\1{\mathbf{1}}
\def\eps{\varepsilon}
\newcommand{\dif}{\mathrm{d}}
\newcommand{\e}{\mathrm{e}}
\newcommand{\im}{\mathrm{i}}
\newcommand{\norm}[1]{\|#1\|}
\renewcommand{\Re}{\operatorname{Re}}
\renewcommand{\Im}{\operatorname{Im}}
\begin{document}

 \title[de Branges spaces with doubling phase]{Sampling and interpolation in de Branges spaces with doubling phase}

\author{Jordi Marzo, Shahaf Nitzan and Jan-Fredrik Olsen}

\address{Departament de matem\`atica aplicada i an\`alisi, Universitat de Barcelona, Gran Via 585, 08007, Barcelona, Spain}
\email{jmarzo@ub.edu}

\address{@School of Mathematical Sciences, Tel Aviv University Ramat Aviv, Tel Aviv 69978, Israel}
\email{shahaf.n.h@gmail.com}

\address{Centre for Mathematical Sciences, Lund University, P.O. Box 118, SE-221 00 Lund, Sweden}
\email{janfreol@maths.lth.se}

\section*{PRELIMINARY VERSION}

\begin{abstract}	
	The de Branges spaces of entire functions generalise the classical Paley-Wiener space of 
	square summable bandlimited functions. 
	Specifically, the square norm is computed on the real line
	with respect to weights given by the values of certain entire functions.
	For the Paley-Wiener space, this can be chosen to be an  exponential function where the phase increases linearly. 
	As our main result, we   establish a natural geometric characterisation, in terms of densities, for  real sampling and interpolating sequences  in the case when the derivative of the phase function
	merely gives a doubling measure on the real line. 
	Moreover, a consequence of this doubling condition, is that the spaces we consider
	are one component model spaces.
	A novelty of our work is the application to de Branges spaces of techniques developed by Marco, Massaneda and Ortega-Cerd{\`a} for Fock spaces satisfying a doubling condition analogue to ours. 
\end{abstract}

\maketitle



\section{Introduction}

\subsection{de Branges spaces and model spaces}

	An entire function $E$ is a Hermite-Biehler function  if it has no real zeroes and $|E^\ast(z)|<|E(z)|$ for
    $z\in \C_+=\{ z\in \C: \Im z>0 \}$, where $E^\ast(z) = \overline{E(\bar{z})}$.
    For such functions, the corresponding de Branges space is the Hilbert space of functions
\begin{equation*}
    H(E) = \left\{ f \; \text{entire} \;  : \; {f}/{E}, {f^\ast}/{E} \in H^2(\C_+) \right\},
\end{equation*}
    where $H^2(\C_+)$ is the Hardy space of the upper half-plane. The space $H(E)$ is equipped with the norm
\begin{equation*}	
    \norm{f}_{H(E)} := \sqrt{ \int_\R \Abs{ \frac{f(x)}{E(x)} }^2 \dif x}.
\end{equation*}

    The main example of a de Branges space is the Paley-Wiener space, which is
    obtained by taking $E(z)=e^{-i\pi z}$. Inspired by this, it is usual to consider the
    polar decomposition
\begin{equation*}
	E(x) = \abs{E(x)} \e^{- \im \phi(x)}.
\end{equation*}
    The phase function $\phi$ is the increasing argument of $E$.
	
    If  $Z(E)$ is the zero set of  $E$, then for some $\sigma \geq 0$ we have
\begin{equation*}
		\phi '(x)=\sigma+\sum_{z \in Z(E)}  \Im \frac{1}{x  - \overline{z}}.
\end{equation*}
	This is easy to see, e.g. since $E^\ast/E$ is an inner function on the upper half-plane with singular support only at infinity.
	Observe that when $\sigma =0$, which is an assumption we will make from now on,   the measure $\mu = \phi'(x) \dif x$ is
	a sum of harmonic measures with respect to the complex conjugate of the points in $Z(E)$.

	We say that the de Branges space $H(E)$ satisfies the doubling condition if this measure
	 is a doubling measure. I.e., if there exists a constant $C>0$ such that
	$\mu(2I) \leq C\mu(I)$ for all intervals $I \subset \R$, where $2I$ is the interval co-centric to $I$ of double length.

    The measure $\mu$ induces a natural metric on $\R$, namely
\begin{equation*}
    d_\phi(x,y) = \abs{\phi(x) - \phi(y)}.
\end{equation*}
    To motivate the significance of this metric, we consider the reproducing kernels of the space $H(E)$. These
    are functions $K_w \in H(E)$, for $w \in \C$,  such that $\langle f, K_w\rangle = f(w)$ for every $f \in H(E)$.
    It is not hard to see that in the inner product implied by the norm of $H(E)$, we have
\begin{equation*}
    K_{w}(z)=K(z,w)=\frac{i}{2\pi}\frac{E(z)\overline{E(w)}-E^{*}(z)\overline{E^{*}(w)}}{ (z-\overline{w})},\;\;z,w\in \C.
\end{equation*}
    We denote by $\widetilde{K}_{w}=K_{w}/\| K_{w} \|$ the normalised reproducing kernel.
    From this, it follows that for $x,y \in \R$ we have
\begin{equation*}
    K_y(x)  = \frac{\abs{E(x)E(y)}}{\pi}  \frac{\sin(\phi(x)-\phi(y))}{(x-y)},\;\;\widetilde{K}_y(x)=
    \frac{\abs{E(x)}\sin(\phi(x)-\phi(y))}{\sqrt{\pi\phi'(y)}(x-y)}.
\end{equation*}
	So, if we, for some fixed $\alpha \in [0,2\pi)$, let $\{ \omega_n \}$ be a sequence of points such that $\phi(\omega_n) = \alpha + \pi n$,
	then the normalised reproducing kernels $\{ \widetilde{K}_{\omega_n} \}$ form an orthonormal sequence.
	It is a theorem of de Branges   \cite[p. 55]{deBranges1968}, that for every such $\alpha$,
	except at most one, this sequence of reproducing kernels is an orthonormal basis for $H(E)$.

    The model spaces $K_\Theta^2$, where  $\Theta$ is an inner function, is the subspace of the classical Hardy space $H^2(\C_+)$  defined by 
\begin{equation*}
    K_\Theta^2  = H^2(\C_+) \cap (\Theta H^2(\C_+))^\perp.
\end{equation*}
    It is a simple observation that  $f\mapsto f/E$ is a unitary homeomorphism between $H(E)$ and
    the model subspace
    $K^2_\Theta$, where $\Theta =E^*/E$ is the inner function. In this way, it is possible to show that the de Branges spaces correspond exactly to the
      model subspaces with meromorphic inner functions \cite{havin_mashreghi2003}.

    A model space $K_\Theta^2$ satisfies the one component condition if there exists an $\epsilon \in (0,1)$ such that
    the level set
\begin{equation*}
		\{ z : \abs{\Theta(z)} < \epsilon \}
\end{equation*}
    is connected. These spaces where introduced by Cohn \cite{cohn1982, cohn1986}.
    By   a characterization due to Aleksandrov \cite{aleksandrov1999} and Lemma
   \ref{local doubling equivalences} below,   the meromorphic
    model spaces satisfying the one component condition essentially correspond to  the class of de Branges spaces satisfying
    a local doubling condition (Corollary \ref{aleksandrov corollary}).


\subsection{The Fock space approach to sampling and interpolation}
    Sampling and interpolating sequences can be seen as dual concepts.
    We give the definitions for a general Hilbert space $H$ of functions, analytic
    on a domain $\Omega \subset \C$, with reproducing kernels $k_z$.
\begin{definition}
		Let $\Gamma \subset \Omega$ be a sequence of points. Then:
		\begin{itemize}
			\item[(a)] 	$\Gamma$ is called sampling for $H$ if there exists constants $A,B > 0$ such that for all $f \in H$ we have
		\begin{equation*}
			A \norm{f}^2 \leq \sum_{\gamma \in \Gamma} \abs{f(\gamma)}^2/\norm{k_\gamma}^2 \leq B\norm{f}^2.
		\end{equation*}
		(If only the upper inequality holds,   $\Gamma$ is said to be a Bessel sequence.)
		\item[(b)]  $\Gamma$ is called interpolating for $H$ if, for every sequence $\{a_\gamma\}$ satisfying
		\begin{equation*}
			\sum_{\gamma \in \Gamma} \abs{a_\gamma}^2 / \norm{k_\gamma}^2 < \infty,
		\end{equation*}
			there exist $f \in H$ such that $f(\gamma) = a_\gamma$.
		\end{itemize}
\end{definition}

    Observe that     a   sequence $\Gamma$ is sampling if and only if the
    normalized reproducing kernels $\{ k_{\gamma}/\| k_{\gamma} \| \}$ form a frame in $H$, while $\Gamma$ is
    interpolating if and only if $\{ k_{\gamma}/\| k_{\gamma} \| \}$ is a Riesz sequence in $H$ (for these, and related notions, see \cite{seip2004book}).
    
We use the following notion of separation. 
\begin{definition}
    A real sequence $\Gamma$ is called $\phi-$separated when there exists an $\epsilon>0$ such that
    $d_\phi( \gamma, \gamma') >\epsilon$	
    whenever $\gamma, \gamma' \in \Gamma$ and $\gamma\neq \gamma'$.
\end{definition}
Observe that the points $\omega_n$ for which $\phi(\omega_n) = \alpha + \pi n$ are at an integer distance from each other with respect to this metric.

        To study the \emph{real} sampling and interpolating sequences for $H(E)$, we introduce the following Beurling-type densities.
Recall that $\mu(I)=\int_I \phi'(x)dx$.
\begin{definition} 
 	Given a $\phi-$separated sequence $\Gamma=\{ \gamma_{k} \}\subset \R$, and a measure $\mu$   as above, the lower and upper uniform densities of $\Gamma$
	are given by
\begin{equation*}
	D^-_\phi(\Gamma)=\liminf_{r\to \infty} \inf_{I:\mu(I)=r} \frac{  \abs{\Gamma \cap I}}{r},
\end{equation*}
and
\begin{equation*}
	D^+_\phi(\Gamma)=\limsup_{r\to \infty} \sup_{I:\mu(I)=r} \frac{ \abs{\Gamma \cap I}}{r},
\end{equation*}
where the $I$ are bounded intervals in $\R$ and $\abs{\Gamma \cap I}$ denotes the number of points in $\Gamma \cap I$.
\end{definition}
    Our approach is motivated by   work on sampling and interpolating sequences
    in the so-called weighted Paley-Wiener spaces \cite{lyubarskii_seip2002},
    and
        in the Fock space setting \cite{marco_massaneda_ortega-cerda2003}.
    I.e., the space
\begin{equation*}
    \mathcal{F}  =  \Big\{ f \; \text{entire} : \int_\C \abs{f(z)}^2 \e^{-\Phi(z)} \dif m(z) < \infty \Big\},
\end{equation*}
    where $\dif m$ is the planar Lebesgue measure and $\Phi$ is a subharmonic function on $\C$.

    In \cite{seip1992, seip_wallsten1992}, 
    the sampling and interpolating sequences are characterized in terms of Beurling densities for the classical Fock space
    $\Phi(z)=|z|^2$ (for which $\Delta \Phi = 1$).
    In subsequent work, the $L^p$ case for spaces with $\Delta \Phi\simeq 1$ was studied  \cite{ortega-cerda_seip1998}.
    Finally, in \cite{marco_massaneda_ortega-cerda2003},
    the sampling and interpolating sequences are characterized in terms of Beurling-type densities for spaces where
    $\Delta \Phi$ gives a doubling measure on $\C$.

    The analogy to the de Branges setting becomes clearer, perhaps, when we observe that the  function
\begin{equation*}
    \Phi(z) := \left\{ \begin{array}{cc}   \log \abs{E(z)} & \Re z \geq 0,  \\ \log \abs{E^\ast(z)} & \Re z < 0,   \end{array} \right.
\end{equation*}
    has a Laplacian which is supported on $\R$ and satisfies $\Delta \Phi = 2 \phi'$ there (see Lemma \ref{delta psi lemma}). With this notation,
    the classical case (Paley-Wiener space) corresponds to $\Delta \Phi = 1$ when restricted to $\R$.
    Since functions in $H(E)$ have similar local properties as those in the corresponding Fock space, we are able to use
     Fock space techniques in our setting.

    Recall that the   density results for the Paley-Wiener case due to
    Beurling, Jaffard and Seip \cite{beurling1989, jaffard1991, seip1992} extend essentially without changes to
    the so called Weighted Paley-Wiener spaces, \cite{lyubarskii_seip2002}.
	In particular, they show that any such space is equal to a de Branges space for which the
	zeroes of the Hermite-Biehler function do not approach the real axis, and for which  $\phi'(x) \simeq 1$. The family of spaces we study contains spaces which were not studied in that setting. However, the converse is also true, they give an
	example of a de Branges space that is equal to the classical Paley-Wiener space, but for which
	the derivative of the phase of the Hermite-Biehler function is not   doubling.

    In an example where the phase function is made to oscillate by placing zeroes exponentially sparse on a horizontal line, 
    Baranov \cite{baranov2005stability}
  proved that, in contrast to what is known for Paley-Wiener spaces,   there are separated sequences that are not Bessel sequences. This indicates that density results   like the
    ones of theorems \ref{theorem_sampling} \and\ref{theorem_interpolation},  cannot hold in this non-doubling situation. Recently, this was studied in more generality by Belov, Mengestie and    Seip  \cite{belov_mengestie_seip2010}, who give a complete characterisation of Bessel sequences for a class of  ``small'' de Branges spaces.   (See also the discussion following the statement of Theorem \ref{theorem_interpolation}.)

	
    Having said this, it appears natural to  study the de Branges spaces with doubling
    measure $\phi'(x)dx$  as an  extension of Paley-Wiener spaces,
    using techniques adapted from the Fock space setting.

    Observe that this approach suggests the study of the de Branges
      analogs to the ``large'' and ``small'' Fock spaces introduced
    in \cite{borichev_dhuez_kellay2007} and \cite{borichev_lyubarskii2010},
    among other problems. Some of this work has been already done by Mengestie in
    his forthcoming doctoral dissertation \cite{mengestie_thesis} (see also \cite{belov_mengestie_seip2010}).


\subsection{Main result}
    We obtain  the following description of sampling and interpolating sequences for the de Branges spaces
    satisfying the doubling condition.

\begin{theorem}                                             \label{theorem_sampling}
    Let $E$ be a Hermite-Biehler function with phase function $\phi$, and suppose that $\phi'(x) \dif x$ is a doubling
    measure on $\R$.

    If $\Gamma$ is a real sampling sequence for $H(E)$, then it is a finite union of $\phi-$separated subsequences
    and there exists a $\phi-$separated subset  $\Gamma'\subset \Gamma$ which is sampling with
    $D^-_\phi(\Gamma')\ge 1/\pi$. Conversely, if a real sequence $\Gamma$ is a finite union of $\phi-$separated sequences
    and there is a $\phi-$separated subsequence $\Gamma'\subset \Gamma$ such that
    $D^-_\phi(\Gamma')> 1/\pi$, then $\Gamma$ is sampling.
\end{theorem}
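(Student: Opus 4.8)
The plan is to transpose to $H(E)$ the method developed by Marco, Massaneda and Ortega-Cerd{\`a} \cite{marco_massaneda_ortega-cerda2003} for doubling Fock spaces, with the doubling of $\mu=\phi'\,dx$ playing the role played there by the doubling of $\Delta\Phi$. Everything rests on a short list of local estimates for the reproducing kernels, all of which hold because $\mu$ is doubling and are proved by comparing $\phi'$ and $|E|$ on $d_\phi$-balls of bounded radius (these are the de Branges analogues of the Fock-space lemmas of \cite{marco_massaneda_ortega-cerda2003}): writing $B_\phi(a,r)=\{x\in\R:d_\phi(x,a)<r\}$, one has (i) off-diagonal decay $|\langle\widetilde K_x,\widetilde K_y\rangle|\le C_N(1+d_\phi(x,y))^{-N}$ for every $N$; (ii) a near-diagonal lower bound $|\langle\widetilde K_x,\widetilde K_y\rangle|\ge c_0>0$ whenever $d_\phi(x,y)\le\delta_0$; (iii) continuity, $\|\widetilde K_x-\widetilde K_y\|\to0$ as $d_\phi(x,y)\to0$, uniformly in $x$; and (iv) a submean-value inequality $|f(\gamma)|^2/\|K_\gamma\|^2\lesssim\int_{B_\phi(\gamma,1)}|f/E|^2$ for $f\in H(E)$. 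Besides these we use the de Branges orthonormal system $\{\widetilde K_{\omega_n}\}$, $\phi(\omega_n)=\alpha+\pi n$, which is $\phi$-separated with $D^\pm_\phi=1/\pi$ and serves as the model against which densities are measured.

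\emph{Necessity.} Let $\Gamma$ be sampling, so that $\{\widetilde K_\gamma\}_{\gamma\in\Gamma}$ is in particular a Bessel system with some bound $B$. Feeding $f=K_{\gamma_0}$, $\gamma_0\in\Gamma$, into the Bessel inequality and keeping only the terms with $d_\phi(\gamma,\gamma_0)\le\delta_0$ gives, by (ii), that $\Gamma$ has at most $B/c_0^2$ points in every ball $B_\phi(\cdot,\delta_0)$; hence $\Gamma$ is a finite union of $\phi$-separated sequences. Next, let $\Gamma'\subseteq\Gamma$ be a maximal $\eps$-separated subset; every point of $\Gamma$ lies in some $B_\phi(\gamma',\eps)$, $\gamma'\in\Gamma'$, so by (iii) and the finite overlap just obtained a standard frame-perturbation argument shows that for $\eps$ small enough $\{\widetilde K_{\gamma'}\}_{\gamma'\in\Gamma'}$ is still a frame, i.e. $\Gamma'$ is sampling. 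There remains the density bound for a $\phi$-separated sampling $\Gamma'$, which we obtain by a comparison-of-densities argument in the style of Ramanathan--Steger and Landau. Let $T_I=PM_{\mathbf{1}_{I}}P$ be the Toeplitz (concentration) operator on $K^2_\Theta$ attached to a bounded interval $I$; then $\mathrm{trace}(T_I)=\mu(I)/\pi$, and by the one-component property (Corollary \ref{aleksandrov corollary}) its spectrum clusters near $0$ and $1$, so the span $V_I$ of the eigenfunctions with eigenvalue close to $1$ has dimension $\mu(I)/\pi-o(\mu(I))$, uniformly in $I$. Functions in $V_I$ are almost entirely concentrated on $I$, hence by (iv) they are, up to an arbitrarily small relative error, ``seen'' only by the points of $\Gamma'$ in a fixed $d_\phi$-neighbourhood $I_R$ of $I$; the sampling inequality then makes $f\mapsto(f(\gamma)/\|K_\gamma\|)_{\gamma\in\Gamma'\cap I_R}$ injective on $V_I$, so $\#(\Gamma'\cap I_R)\ge\dim V_I$. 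Since $\mu(I_R)\le\mu(I)+2R$, dividing by $\mu(I_R)$ and letting $\mu(I)\to\infty$ with $R$ fixed yields $D^-_\phi(\Gamma')\ge1/\pi$.

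\emph{Sufficiency.} Now suppose $\Gamma$ is a finite union of $\phi$-separated sequences containing a $\phi$-separated $\Gamma'$ with $D^-_\phi(\Gamma')>1/\pi$. The upper sampling bound for $\Gamma$ is immediate from (iv): for a single $\phi$-separated sequence the balls $B_\phi(\gamma,1)$ have bounded overlap, so $\sum_\gamma|f(\gamma)|^2/\|K_\gamma\|^2\lesssim\|f\|^2$, and a finite union only changes the constant. Since $\Gamma'\subseteq\Gamma$, the lower bound for $\Gamma$ follows once $\Gamma'$ is shown to be sampling, so it suffices to prove: a $\phi$-separated sequence with lower density strictly above $1/\pi$ is sampling. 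This we do by the weak-limit (normal families) argument of \cite{marco_massaneda_ortega-cerda2003}, in the spirit of Beurling \cite{beurling1989} and Seip \cite{seip1992}, carried out in the category of pairs $(E,\Gamma)$. Because the estimates (i)--(iv) hold with constants depending only on the doubling constant --- which is invariant under the translations $E\mapsto E(\cdot-a)$, $\Gamma\mapsto\Gamma-a$ --- the unit ball of $H(E)$ and the relevant families of Hermite--Biehler functions form normal families, and if $\Gamma'$ were not sampling one could extract from a minimizing sequence, after recentring, a weak limit consisting of a de Branges space $H(E_\infty)$ with doubling phase of the same constant, a nonzero $f_\infty\in H(E_\infty)$, and a $\phi$-separated set $\Gamma_\infty$ on which $f_\infty$ vanishes. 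By lower semicontinuity of the density under this procedure, $D^-_{\phi_\infty}(\Gamma_\infty)\ge D^-_\phi(\Gamma')>1/\pi$. But a $\phi$-separated set with lower $\phi$-density strictly greater than $1/\pi$ is a set of uniqueness for any de Branges space with doubling phase, because the zero set of a nonzero function in such a space has upper $\phi$-density at most $1/\pi$ --- a Jensen-type count of zeros against the growth of $\arg f$, or a comparison with the complete system $\{\widetilde K_{\omega_n}\}$. This contradiction finishes the proof.

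\textbf{The main obstacle.} The heart of the matter is the sufficiency. In \cite{marco_massaneda_ortega-cerda2003} the weak-limit correspondence, together with the construction of the peak/interpolating functions it requires, is powered by H\"ormander's $\bar\partial$-estimate with weight $e^{-\Phi}$; but here $\Delta\Phi=2\phi'$ is carried by $\R$ rather than being a genuine planar density (Lemma \ref{delta psi lemma}), so $\Phi$ is harmonic off the real line and that machinery degenerates. One must therefore argue directly on $\R$, leaning on the estimates (i)--(iv) and, where it helps, on the one-component model-space picture supplied by Lemma \ref{local doubling equivalences} and Corollary \ref{aleksandrov corollary}; proving (i)--(iv) with constants uniform over the translated phases, and setting up the weak-limit formalism so that a nonzero limit function, $\phi$-separation, and the lower density bound all survive the passage to the limit in a space which is not translation invariant, is the delicate technical core on which every step above rests.
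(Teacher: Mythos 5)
Your overall strategy (Ramanathan--Steger/Landau for necessity, Beurling--Marco--Massaneda--Ortega-Cerd\`a weak limits for sufficiency) is the same as the paper's, but the sufficiency argument as you sketch it has a genuine gap precisely at the point you defer to ``the delicate technical core''. You run the weak-limit compactness argument directly in the Hilbert space $H(E)$: if $\Gamma'$ were not sampling you take unit-norm $f_k$ with small samples and ``after recentring'' extract a nonzero limit $f_\infty\in H(E_\infty)$ vanishing on $\Gamma_\infty$. This step fails: failure of the $L^2$ lower bound gives no point $x_k$ at which $|f_k(x_k)|e^{-\Phi(x_k)}/\sqrt{\phi'(x_k)}$ is bounded below by a constant depending only on $\|f_k\|_{H(E)}$ (the mass of an $L^2$-normalized function can spread out, so the normalized sup can be arbitrarily small), hence the locally uniform limit after recentring may be identically zero; and even a nonzero locally uniform limit need not lie in $H(E_\infty)$, since the $L^2$ norm does not pass to such limits. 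This is exactly why the paper does \emph{not} argue in $H(E)$: it first reduces $L^2$ sampling to sup-norm sampling in $\mathcal{F}^\infty_{(1+\epsilon)\Phi}$ (Lemma \ref{tango}), a reduction that rests on Beurling's duality/balayage argument together with the peak functions of Lemma \ref{peak lemma}, which in turn require the multiplier construction of Theorem \ref{multiplier theorem}; the weak-limit machinery is then set up with the \emph{scaled} translations $\tau_{x_0}(z)=z/\psi'(x_0)+x_0$ and the potential operator $K[\cdot]$ (Lemmas \ref{w-l of L}--\ref{finaly close to the end}), not with plain translations $E\mapsto E(\cdot-a)$, which in a non-translation-invariant doubling setting let the limit phase degenerate. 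None of this machinery appears in your proposal, and the strict inequality $D^-_\phi>1/\pi$ is needed precisely to absorb the $(1+\epsilon)$ bump of the weight in the final Jensen-type uniqueness count (Lemma \ref{finaly close to the end}); that bookkeeping is also absent.

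Two further points on the necessity half. Your estimate (i) is false as stated: since $|\langle\widetilde K_x,\widetilde K_y\rangle|=|\sin(\phi(x)-\phi(y))|/\bigl(\sqrt{\phi'(x)\phi'(y)}\,|x-y|\bigr)$, the decay is only a fixed power of $d_\phi(x,y)$ coming from Lemma \ref{technicallemma-2} (already in the Paley--Wiener case it is merely $d_\phi(x,y)^{-1}$), not faster than every polynomial; and the Landau/Toeplitz clustering you invoke ``by the one-component property'' requires an actual second-moment estimate ($\operatorname{tr}T_I-\operatorname{tr}T_I^2=o(\mu(I))$ uniformly in $I$) which you do not prove. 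These are plausibly repairable, and indeed the paper sidesteps them by the Ramanathan--Steger comparison of the separated sampling sequence with the interpolating sequences furnished by the de Branges orthonormal bases $\{\widetilde K_{\omega_n}\}$, $\phi(\omega_n)=\alpha+\pi n$, of density exactly $1/\pi$, together with Corollary \ref{separation and sampling} and the stability Lemma \ref{SAsec: stability lemma} for the separation statements. The essential missing content, however, is the $L^2\to L^\infty$ transfer and the scaled-translation weak-limit formalism in the sufficiency direction.
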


    The interpolation part reads as follows.

\begin{theorem}                                             \label{theorem_interpolation}
    Let $E$ be a Hermite-Biehler function with phase function $\phi$, and suppose that $\phi'(x) \dif x$ is a doubling
    measure on $\R$.

    If $\Gamma$ is a real  interpolating sequence for $H(E)$, then it is $\phi-$separated with $D^+_\phi(\Gamma)\le 1/\pi$. Conversely, if
    a real sequence
    $\Gamma$ is $\phi-$separated and $D^+_\phi(\Gamma)< 1/\pi$, then $\Gamma$ is interpolating.
\end{theorem}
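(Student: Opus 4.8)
The plan is to transport the problem into the Fock-type picture provided by the weight $\Phi$ of Lemma \ref{delta psi lemma}, whose distributional Laplacian is $2\phi'\,\dif x$ and is carried by $\R$, so that $\|f\|_{H(E)}^2=\int_\R|f|^2 e^{-2\Phi}\,\dif x$, and then to adapt the $\bar\partial$-techniques of Marco, Massaneda and Ortega-Cerd\`a \cite{marco_massaneda_ortega-cerda2003}. The doubling hypothesis enters throughout in two complementary ways. First, by Lemma \ref{local doubling equivalences} and Corollary \ref{aleksandrov corollary} it makes $H(E)$ a one-component model space, which yields uniform control of the reproducing kernels --- the norm identity $\|K_\gamma\|^2=\pi^{-1}|E(\gamma)|^2\phi'(\gamma)$, the concentration of the $H(E)$-mass of $\widetilde K_\gamma$, with polynomially decaying tails, in the $d_\phi$-ball of radius $O(1)$ about $\gamma$, and the fact that every $\phi$-separated sequence is automatically a Bessel sequence for $H(E)$ --- and, crucially, makes the natural metric of the model space comparable to $d_\phi$. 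Second, doubling guarantees that the de Branges orthonormal basis $\{\widetilde K_{\omega_n}\}$, with $\phi(\omega_n)=\alpha+\pi n$ for a suitable $\alpha$, has nodes of uniform $\mu$-density exactly $1/\pi$; this is the yardstick against which $\Gamma$ is measured.

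For the necessity part, recall from the excerpt that $\Gamma$ is interpolating for $H(E)$ precisely when $\{\widetilde K_\gamma\}_{\gamma\in\Gamma}$ is a Riesz sequence in $H(E)$, hence in particular uniformly minimal. Combined with the one-component kernel estimates --- from which one reads off that $|\langle\widetilde K_\gamma,\widetilde K_{\gamma'}\rangle|\to 1$ as $d_\phi(\gamma,\gamma')\to 0$, uniformly in $\gamma,\gamma'$ --- uniform minimality forces $\Gamma$ to be $\phi$-separated. For the bound $D^+_\phi(\Gamma)\le 1/\pi$ I would run a Ramanathan--Steger-type comparison. Fix an interval $I$ with $\mu(I)=r$ large and let $P_I$ be the orthogonal projection of $H(E)$ onto $\operatorname{span}\{\widetilde K_{\omega_n}:\omega_n\in I\}$, a space of dimension $r/\pi+O(1)$. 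The homogeneous approximation property --- that $\widetilde K_\gamma$ is norm-approximated by $P_I\widetilde K_\gamma$ for $\gamma$ well inside $I$, uniformly in the location of $I$ --- follows from the tail decay of the kernels in the $d_\phi$-metric and the doubling geometry; together with the lower Riesz bound and $\phi$-separation it shows that, for $r$ large, the map $\ell^2(\Gamma\cap I)\ni c\mapsto P_I\big(\sum_{\gamma\in\Gamma\cap I}c_\gamma\widetilde K_\gamma\big)$ is bounded below. Being injective, it gives $|\Gamma\cap I|\le\dim\operatorname{ran}P_I=r/\pi+o(r)$, uniformly in $I$, that is $D^+_\phi(\Gamma)\le 1/\pi$ --- the precise form of ``an interpolating family cannot be denser than a complete one.''

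For the sufficiency part, suppose $\Gamma$ is $\phi$-separated with $D^+_\phi(\Gamma)<1/\pi$ and let $\{a_\gamma\}$ satisfy $\sum_\gamma|a_\gamma|^2/\|K_\gamma\|^2<\infty$. Following \cite{marco_massaneda_ortega-cerda2003}, I would obtain the interpolant by a $\bar\partial$-correction of a naive one, carried out in a planar weighted $L^2(\C)$ space whose weight is a suitably adjusted version of $2\Phi$ --- still with Laplacian essentially the doubling measure $4\phi'\,\dif x$ near $\R$, but modified away from $\R$ so that entire functions of the relevant growth become square-integrable. Let $\chi_\gamma$ be a smooth cut-off equal to $1$ in the $d_\phi$-ball of radius $\epsilon/2$ about $\gamma$ and vanishing outside radius $\epsilon$ (with $\epsilon$ the separation constant), and set $F=\sum_\gamma\big(a_\gamma/\widetilde K_\gamma(\gamma)\big)\,\chi_\gamma\,\widetilde K_\gamma$; since the $\widetilde K_\gamma$ are entire, $F$ is smooth with $F(\gamma)=a_\gamma$, it belongs to that weighted space, and $\bar\partial F$ is supported in thin, boundedly overlapping annuli about the points of $\Gamma$, with weighted $L^2$-norm $\lesssim\big(\sum_\gamma|a_\gamma|^2/\|K_\gamma\|^2\big)^{1/2}$ by $\phi$-separation, the Bessel property and the kernel decay. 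One then solves $\bar\partial u=\bar\partial F$ with $u$ vanishing on $\Gamma$, via a weighted H\"ormander estimate for that same weight augmented by the logarithmic singularities at the points of $\Gamma$ which force this vanishing; it is precisely the strict gap $D^+_\phi(\Gamma)<1/\pi$ --- through comparison with the density-$1/\pi$ reference nodes $\{\omega_n\}$ --- that makes this weighted $\bar\partial$-problem solvable, after the routine regularisation replacing $\phi'\,\dif x$ by a comparable smooth density. Then $f:=F-u$ is entire and interpolates $\{a_\gamma\}$; transferring the planar $L^2$-bound to the boundary estimate $\int_\R|f/E|^2\,\dif x<\infty$ --- via Plancherel--P\'olya-type inequalities available uniformly by doubling --- and a Phragm\'en--Lindel\"of argument in $\C_+$ and $\C_-$ yield $f/E,\,f^\ast/E\in H^2(\C_+)$, i.e. $f\in H(E)$, with norm $\lesssim\big(\sum_\gamma|a_\gamma|^2/\|K_\gamma\|^2\big)^{1/2}$; boundedness of the interpolation operator follows by tracking the constants.

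I expect the main obstacle to be the sufficiency, and within it the $\bar\partial$-step: choosing the right planar weight --- a modification of $2\Phi$ whose Laplacian is a doubling, but genuinely singular, line-supported measure --- so that H\"ormander's estimate applies, solving with the extra vanishing conditions at $\Gamma$ exactly when $D^+_\phi(\Gamma)<1/\pi$, and then transferring the resulting planar estimate back to the boundary integral that defines $H(E)$. These are precisely the points where the doubling condition --- through the one-component property, the geometry of $d_\phi$, and the attendant Plancherel--P\'olya inequalities --- is indispensable, and where adapting the Fock-space machinery of \cite{marco_massaneda_ortega-cerda2003} to the de Branges setting calls for genuine care.
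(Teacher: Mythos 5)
Your necessity argument is essentially the paper's: separation plus a Ramanathan--Steger comparison of $\Gamma$ against a sampling sequence of density $1/\pi$ (the paper compares a general interpolating sequence with a separated sampling sequence via the trace of $P_\Lambda P_\Gamma P_\Lambda$, and the de Branges orthonormal basis $\{\widetilde K_{\omega_n}\}$, $\phi(\omega_n)=\alpha+\pi n$, plays exactly the role of your reference system). The paper proves separation by a direct construction (an interpolating function $f_\gamma$ together with the subharmonicity estimate of Lemma \ref{CMBsec: subharmonicity lemma} and the discrete Carleson measure of Remark \ref{CMBsec: discrete carleson remark}), whereas you invoke uniform minimality and kernel overlap; that part is fixable. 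The real issue is the sufficiency half.

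For sufficiency the paper does not use $\bar\partial$-methods at all: it constructs a multiplier $f\in\mathcal F^\infty_\Phi$ vanishing on $\Gamma$ (Theorem \ref{multiplier theorem}, proved by atomizing $\mu/2\pi-\sum\delta_\gamma$ with $n$ vanishing moments per block, Lemma \ref{ortega lemma}), and then shows directly that the Lagrange series $F(z)=\sum_\gamma w_\gamma f(z)/\big((z-\gamma)f'(\gamma)\big)$ lies in $H(E)$, by duality against $H^2(\C_\pm)$, the boundedness of the Hilbert transform, and the Carleson measures of Lemma \ref{CMBsec: carleson lemma}. Your proposed Hörmander route has a concrete gap at its central step: the weight you want to use has $\Delta(2\Phi)=4\phi'(x)\,\dif x$, a singular measure carried by $\R$ and \emph{zero} off $\R$, while $\bar\partial F$ is supported in annuli touching $\R$; the weighted $\bar\partial$-estimate requires a strictly positive Laplacian (a pointwise lower bound for $\Delta$ of the weight) precisely on the support of the data, so the standard estimate degenerates there. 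You acknowledge this by saying the weight should be ``modified away from $\R$,'' but that modification is the whole difficulty: one must smear the line-supported doubling measure into a genuinely two-dimensional one comparable to the MMOC setting, add the logarithmic singularities at $\Gamma$ without destroying subharmonicity (this is where $D^+_\phi(\Gamma)<1/\pi$ must enter, and you assert rather than prove that it does), and then transfer the planar bound back not merely to $\int_\R|f/E|^2\,\dif x<\infty$ but to the Hardy-space conditions $f/E,\,f^\ast/E\in H^2(\C_\pm)$, which needs growth control in both half-planes that your sketch leaves to a Phragm\'en--Lindel\"of argument. As written, the sufficiency direction is a program whose decisive steps are exactly the ones deferred; the paper's multiplier-plus-Lagrange-series argument avoids the degenerate $\bar\partial$-problem altogether, which is presumably why the authors chose it.
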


    As we   pointed out above, for some spaces $H(E)$, where the derivative of the phase function $\phi$ is highly oscillating, such results are no
    longer valid \cite{baranov2005stability, belov_mengestie_seip2010}.
%
    An important motivation to study such de Branges spaces, was to try to find
    a counterexample to Feichtinger's conjecture, which in our setting
    says that a sampling sequence can be written as a finite union of interpolating sequences.

     However, it has been recently proved that Feichtinger's conjecture for reproducing kernels holds true   both in this case \cite{belov_mengestie_seip2010}, and in the case of  one-component
    model subspaces \cite{baranov_dyakonov2009preprint}. 
    (Observe that for de Branges spaces with doubling phase, this also follows   from theorems \ref{theorem_sampling} and
    \ref{theorem_interpolation}.)
    This seems to suggest that counterexamples, if there are any, should be found when the phase goes from doubling to
    highly oscillatory.


\subsection{Outline of the paper.}
    In section \ref{sec_preliminaires}, we prove  some preliminary results
    about sampling and interpolating sequences in de Branges spaces with doubling and locally doubling measures.
    In section \ref{sec_Basic_constructions}, following Beurling we construct a multiplier function.
    In section \ref{sec_Main_results}, we first prove the necessity of the density conditions in theorems
    \ref{theorem_sampling} and  \ref{theorem_interpolation}  using the techniques developed by
    Ramanathan and Steger. Then, we prove the sufficiency for interpolation using a Lagrange interpolating function and the
    sufficiency for sampling following the weak limit techniques developed by Beurling.


%
%

\section{Preliminaries}                                                         \label{sec_preliminaires}

    We begin by establishing some preliminary results needed in the rest of the paper, some of which
    are previously known. We only provide full proofs if the result is new, or there is some novelty in the argument.

	First some notation. For $x \in \R$ and $r>0$, we set   $I_\phi (x,r)=\{ y\in \R : |\phi(x)-\phi(y)|<r \}$,
    and let $D_\phi(x,r)$ denote the disk in $\C$ with both   diameter and intersection with $\R$ equal to $I_\phi(x,r)$.
    For $\phi(x) = x$, we simply write $I(x,r)$ and $D(x,r)$, respectively. By $\dif m$, we denote the Lebesgue measure on $\C$, and we use
    the symbol $f \lesssim g$ to denote that there exists a constant $C>0$ such that $f \leq C g$ for real valued functions $f$ and $g$.
    The symbols $\gtrsim$ and $\simeq$ are defined analogously.
    For an entire function $f$,  we denote its zero set by $Z(f)$.

%
%

\subsection{Locally doubling measures and phase functions}

    Recall from the introduction that a positive Borel measure $\mu$ is doubling if there exists a constant $C>0$
    such that $\mu(2I) \leq C\mu(I)$ for every bounded interval $I$. We say that $\mu$ is locally doubling if
    the same holds true whenever $\mu(2I) < 1$.

    (Observe that this notion of locally doubling measures is different than the one defined with the same name in \cite{ortega-cerda2002}.)

    We state a standard result for doubling measures,
    see \cite[Lemma 2.1.]{mastroianni_totik2000} or \cite[Lemma 1]{ortega-cerda2002}.

\begin{lemma}                                               \label{lemma gamma doubling}
    Let $\mu$ be a positive (Borel) doubling measure. Then there exists $0<\gamma$ such that for any
    pair of intervals $I,I'$ with radius $r,r'$ such that $r>r',$ $I\cap I'\neq \emptyset$
\begin{equation}                                                        \label{doubling}
 	\left( \frac{\mu(I)}{\mu(I')}\right)^\gamma \lesssim \frac{r}{r'} \lesssim \left( \frac{\mu(I)}{\mu(I')}\right)^{1/\gamma}
\end{equation}
\end{lemma}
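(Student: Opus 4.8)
The plan is to reduce the assertion to the case of two concentric nested intervals and, in that case, to establish a two-sided polynomial comparison between the ratio of $\mu$-masses and the ratio of lengths. Precisely, I will show that there are exponents $\beta,\delta>0$, depending only on the doubling constant $C$, such that for every pair of concentric intervals $I\subset J$,
\[
\left(\frac{|J|}{|I|}\right)^{\delta}\lesssim \frac{\mu(J)}{\mu(I)}\lesssim \left(\frac{|J|}{|I|}\right)^{\beta},
\]
with implied constants depending only on $C$. Granting this, the statement of the lemma follows by a covering argument carried out at the end.

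The upper bound is immediate from the hypothesis: if $2^{m-1}<|J|/|I|\le 2^{m}$ then $J$ is contained in the concentric dilate $2^{m}I$, and $m$ successive applications of $\mu(2\,\cdot)\le C\mu(\cdot)$ give $\mu(J)\le C^{m}\mu(I)\le C\,(|J|/|I|)^{\log_2 C}\mu(I)$, so $\beta=\log_2 C$ works. The lower bound is the only place where one must use that a doubling measure cannot concentrate. For this I would first prove the one-step estimate $\mu(3I)\ge (1+2C^{-2})\mu(I)$ for every bounded interval $I$: writing $3I=I_1\cup I\cup I_3$ as three consecutive intervals of length $|I|$, one controls $\mu(I)$ by $\mu(I_1)$ (and by $\mu(I_3)$), since $I$ is contained in the length-$2|I|$ interval $I_1\cup I$, which in turn lies inside the concentric triple of $I_1$; two applications of doubling then give $\mu(I)\le C^{2}\mu(I_1)$, i.e. $\mu(I_1)\ge C^{-2}\mu(I)$, and likewise $\mu(I_3)\ge C^{-2}\mu(I)$. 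Summing the three pieces yields $\mu(3I)\ge(1+2C^{-2})\mu(I)$. Iterating this $k=\lfloor \log_3(|J|/|I|)\rfloor$ times and using $3^{k}I\subset J$ gives $\mu(J)\ge (1+2C^{-2})^{k}\mu(I)\gtrsim (|J|/|I|)^{\delta}\mu(I)$ with $\delta=\log_3(1+2C^{-2})>0$.

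To pass to the general configuration, suppose $I,I'$ have radii $r>r'$ and $I\cap I'\ne\emptyset$, and let $I''$ be the interval concentric with $I'$ of radius $3r$. Since the centres of $I$ and $I'$ lie at distance less than $r+r'<2r$, we have $I\subset I''$ and of course $I'\subset I''$, while $|I''|=6r$. Thus $\mu(I)\le\mu(I'')$; on the other hand $I''$ is contained in the concentric dilate of $I$ by the bounded factor $5$, so doubling gives $\mu(I'')\le C^{3}\mu(I)$, whence $\mu(I)\simeq\mu(I'')$. Applying the concentric estimate to the pair $I'\subset I''$, whose radii have ratio $3r/r'\simeq r/r'$, we obtain $(r/r')^{\delta}\lesssim \mu(I'')/\mu(I')\lesssim (r/r')^{\beta}$, and combining with $\mu(I)\simeq\mu(I'')$ this reads $(r/r')^{\delta}\lesssim \mu(I)/\mu(I')\lesssim (r/r')^{\beta}$. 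Taking $\gamma=\min\{\delta,1/\beta\}$ and using that $\mu(I)/\mu(I')\gtrsim 1$ (so that replacing an exponent by a smaller positive one only helps, the remaining case $r\simeq r'$ being trivial), both inequalities of \eqref{doubling} follow.

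The routine parts are the iterations of the doubling hypothesis and the covering argument with universally bounded overlap; the one genuinely non-formal step, and the main obstacle, is the estimate $\mu(3I)\ge (1+2C^{-2})\mu(I)$ with a constant strictly larger than $1$, which encodes the non-concentration of doubling measures and is exactly what forces the lower exponent $\delta$ to be positive.
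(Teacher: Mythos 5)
Your proof is correct. Note that the paper does not prove this lemma at all: it is quoted as a standard fact with references to Mastroianni--Totik and Ortega-Cerd\`a, so your self-contained argument is a genuine addition rather than a variant of an argument in the text. What you do is the standard route and it is sound: the upper bound $\mu(J)/\mu(I)\lesssim(|J|/|I|)^{\log_2 C}$ by iterating the doubling inequality on concentric dilates; the lower bound from the reverse-doubling step $\mu(3I)\ge(1+2C^{-2})\mu(I)$, whose proof (comparing $\mu(I)$ with $\mu(I_1)$ and $\mu(I_3)$ via two doublings of the outer thirds) is exactly the non-concentration input that makes $\delta=\log_3(1+2C^{-2})$ positive; and the passage from concentric to merely intersecting intervals via the auxiliary interval $I''$ concentric with $I'$ of radius $3r$, using $\mu(I)\simeq\mu(I'')$. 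The only cosmetic points: your closing reference to a ``covering argument'' is a misnomer, since the reduction is done directly through $I''$; and in summing $\mu(I_1)+\mu(I)+\mu(I_3)\le\mu(3I)$ one should either take the pieces half-open or invoke the (easy, standard) fact that doubling measures on $\R$ have no atoms. Neither affects the validity of the argument, and your bookkeeping with $\gamma=\min\{\delta,1/\beta\}$ and the harmless case $\mu(I)\lesssim\mu(I')$, $r\simeq r'$ is handled correctly.
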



\begin{remark}
    A version     also holds for locally doubling measures.
    Indeed, one gets the same conclusion
    (\ref{doubling}) supposing that $\mu(I\cup I')\le 1$.
\end{remark}

	For meromorphic inner functions $\Theta$, it was pointed out to us by A.~Baranov that there is a close connection between the one component
	condition and the doubling condition on the phase. In the  following lemma, we essentially prove that for such functions, the local doubling condition on the phase is equivalent to the one component condition (Corollary \ref{aleksandrov corollary}).



\begin{lemma} 								\label{local doubling equivalences}
	Let $E$ be a Hermite-Biehler function with phase $\phi$. Then the following are equivalent.
	\begin{itemize}
		\item[(a)] The measure $\phi'(x) \dif x$ is locally doubling.
		\item[(b)] There exist constants such that $\phi'(x) \simeq \phi'(y)$ whenever $\abs{\phi(x) - \phi(y)} \leq 1$.
		\item[(c)] The inequality $\abs{\phi''(x)} \leq C (\phi'(x))^2$ holds.
	\end{itemize}
\end{lemma}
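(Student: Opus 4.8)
The plan is to prove the chain of implications (c) $\Rightarrow$ (b) $\Rightarrow$ (a) $\Rightarrow$ (c), using throughout the fundamental identity $\phi'(x) = \sum_{z \in Z(E)} \Im \tfrac{1}{x - \bar z} > 0$, which in particular shows that $\phi'$ is a real-analytic positive function on $\R$ (and $\phi$ is a strictly increasing $C^\infty$ diffeomorphism of $\R$). First I would record the elementary observation that, after the change of variables $t = \phi(x)$, condition (b) says precisely that the function $g(t) := \phi'(\phi^{-1}(t))$ satisfies $g(t) \simeq g(s)$ whenever $\abs{t - s} \le 1$; and since $\tfrac{d}{dt}\log g(t) = \phi''(x)/(\phi'(x))^2$, condition (c) is exactly the statement that $\abs{(\log g)'(t)} \le C$ is bounded. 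So in the $t$-variable the lemma reduces to the routine fact that a positive $C^1$ function on $\R$ has $\abs{(\log g)'}$ bounded if and only if $\log g$ is Lipschitz if and only if $g(t) \simeq g(s)$ for $\abs{t-s}\le 1$; the implications (c) $\Rightarrow$ (b) and (b) $\Rightarrow$ (c) then follow (for (b) $\Rightarrow$ (c) one needs a mean-value/Cauchy-estimate argument to pass from the two-point comparison back to a derivative bound — see below).

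For (b) $\Rightarrow$ (a): suppose $\phi'(x) \simeq \phi'(y)$ whenever $d_\phi(x,y) \le 1$, and let $I = (a,b)$ be an interval with $\mu(2I) < 1$, i.e. $\phi(\beta) - \phi(\alpha) < 1$ where $2I = (\alpha,\beta)$. Then all points of $2I$ lie within $d_\phi$-distance $<1$ of each other, so $\phi'$ is comparable to a constant, say $\phi'(y) \simeq c$, throughout $2I$; hence $\mu(2I) = \int_{2I}\phi' \simeq c\,\abs{2I} = 2c\,\abs{I} \simeq 2\int_I \phi' = 2\mu(I)$, giving $\mu(2I) \lesssim \mu(I)$, which is the local doubling condition. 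For (a) $\Rightarrow$ (b): assume $\phi'(x)\dif x$ is locally doubling and fix $x,y$ with $\abs{\phi(x)-\phi(y)}\le 1$. I would first reduce to the case $\abs{\phi(x)-\phi(y)}$ small (a bounded number of doubling steps) and then use the local version of Lemma \ref{lemma gamma doubling} (the Remark): taking $I'$ a tiny interval around $x$ and $I$ a tiny interval around $y$ of the same Euclidean radius, both contained in a fixed interval of $\mu$-length $\le 1$, the estimate $(\mu(I)/\mu(I'))^\gamma \lesssim r/r' = 1 \lesssim (\mu(I)/\mu(I'))^{1/\gamma}$ forces $\mu(I) \simeq \mu(I')$; letting $r = r' \to 0$ and using continuity of $\phi'$ yields $\phi'(x) \simeq \phi'(y)$.

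The main obstacle I anticipate is the implication (b) $\Rightarrow$ (c): the two-point comparison $\phi'(x)\simeq\phi'(y)$ does not by itself control $\phi''$ — one needs to exploit that $\phi'$ is not an arbitrary function but is harmonic data, namely $\phi'(x) = \sum_z \Im\frac{1}{x-\bar z}$, the Poisson-type integral $\int \frac{\eta}{(x-\xi)^2+\eta^2}\,d\nu(\xi,\eta)$ against the positive measure $\nu = \sum_z \delta_z$ supported in $\C_+$. The clean way is to pass to the variable $t=\phi(x)$: condition (b) makes $g = \phi'\circ\phi^{-1}$ a positive function with $g(t)\simeq g(s)$ for $\abs{t-s}\le1$; but $\phi'$ extends to a function harmonic in a neighbourhood of $\R$ in $\C$ (indeed $\Im\frac{E^*{}'/E^* - E'/E}{2}$ or similar), and a positive harmonic-type function comparable to a constant on unit balls automatically has derivative controlled by Cauchy/Harnack estimates on a ball of radius $\sim 1/\phi'$ in the original variable, giving $\abs{\phi''(x)} \lesssim (\phi'(x))^2$. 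I would present this via a Borel--Carathéodory or Harnack inequality on the disk $D_\phi(x,1)$, whose Euclidean radius is $\simeq 1/\phi'(x)$ by definition, on which $\phi'$ is comparable to $\phi'(x)$; differentiating the Poisson/Herglotz representation of $\phi'$ then produces the quadratic bound. This is the only place where the special (harmonic) nature of $\phi'$ is genuinely used, and it is where I would spend the most care.
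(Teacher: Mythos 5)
Your step $(a)\Rightarrow(b)$ is the genuine gap. The application of Lemma \ref{lemma gamma doubling} is not legitimate as stated: that lemma (and its local version) requires $I\cap I'\neq\emptyset$, whereas your tiny intervals around $x$ and $y$ are disjoint once $r<\abs{x-y}/2$, and in any case the comparison constants it yields depend on the ratio of radii and the relative position, so nothing uniform survives the limit $r=r'\to 0$ at two separated points. More fundamentally, no argument using only the (local) doubling property of $\phi'(x)\,\dif x$ can work, because the implication is false for general smooth positive densities: gluing rescaled copies of the weights $x^2+\epsilon^2$ (which are uniformly doubling, being $A_2$ with constant independent of $\epsilon$) produces a doubling measure whose density fails $\phi'(x)\simeq\phi'(y)$ on $d_\phi$-balls of radius $1$ with any uniform constant. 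Since your argument never invokes the Hermite--Biehler structure in this step, it would prove that false statement. The paper's proof of $(a)\Rightarrow(b)$ uses the structure decisively: if $\mu(2I)\le \pi/2$, then no zero of $E^\ast$ can lie in the disc over $2I$ (its harmonic measure of $2I$ would exceed $\pi/2$), and then each Poisson kernel $P_z$, hence $\phi'=\sum_z P_z$, is comparable at the two points. You need this (or an equivalent use of the representation of $\phi'$) to close your loop $(c)\Rightarrow(b)\Rightarrow(a)\Rightarrow(b)\Rightarrow(c)$; the pieces $(c)\Rightarrow(b)$, $(b)\Rightarrow(a)$ that you give are fine and match the paper's.

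A smaller caveat on your $(b)\Rightarrow(c)$ sketch: you correctly identify that the harmonicity of the data must be used, but the ``positive harmonic function plus Harnack'' framing does not quite work. The harmonic extension $\sum_n \Im\,(w-\bar z_n)^{-1}$ is not positive off $\R$ and its series can even diverge there when the $b_n$ are very small; what does work is the holomorphic continuation $F(w)=\sum_n b_n/((w-a_n)^2+b_n^2)$, combined with the two facts that $b_n\phi'(a_n)\ge 1$ always and that (by (b)) $d_\phi(x,y)\simeq\phi'(x)\abs{x-y}$ locally, which force every zero to lie at Euclidean distance $\gtrsim 1/\phi'(x)$ from $x$; then $\abs{F}\lesssim\phi'(x)$ on $D(x,c/\phi'(x))$ and a Cauchy estimate gives $\abs{\phi''(x)}\lesssim\phi'(x)^2$. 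This is viable but needs exactly those inputs spelled out; the paper instead estimates $\phi''$ directly on $\R$, splitting the sum over zeros according to $\abs{x-a_n}\le b_n$ or $\abs{x-a_n}>b_n$ and using the same two facts, which avoids any discussion of the complex extension.
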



\begin{proof}
    $a) \iff b)$:  Suppose that $\mu$ is locally doubling, and let $C>0$ be the local doubling constant of $\mu$
    for intervals $I$ such that $\mu(2I) \leq \pi/2$. We first show that for $x,y$ in any interval $I$ of $\mu$-measure $\pi/2C$, then $\phi'(x) \simeq \phi'(y)$.

    Observe that if $J$ is an interval, co-centric with $I$, and such that $\mu(J) = \pi/2$ then by local
    doubling $\mu(J/2) \geq \mu(J)/C = \pi/2C$. This implies that $I \subset J/2$, and so $2I \subset J$. This implies $\mu(2I) \leq  \pi/2$. 

    Let $D$ denote the disk in $\C$ such that $D \cap \R = 2I$. Since $\phi'(x)$ is a sum of Poisson kernels $P_z$
    with respect to the zeros of $E^\ast$, it
    follows that no such zero   belongs to $D$. Indeed, if this was the case then the harmonic measure of $2I$ with
    respect to this point, which is smaller than $\mu(2I)$, would exceed $\pi/2$.

    For each zero $z$ of $E^\ast$, it now follows easily that $P_z(x) \simeq P_z(y)$, which yields $\phi'(x) \simeq \phi'(y)$.

    By a covering argument, we obtain $(b)$. The converse is immediate.

    $b) \iff c)$: If we denote the zeroes of $E^\ast$ by $z_n = a_n  + \im b_n$, then by the triangle inequality, 
\[
|\phi''(x)|\leq  2\sum_n\frac{b_n|x-a_n|}{\big((x-a_n)^2+b_n^2\big)^2} := 2( A + B),
\]
where we have split the sum according to whether $|x-a_n|\leq b_n$ or $|x-a_n|>b_n$, respectively.
The first estimate is clear, 
\[
A \leq \sum_{\abs{x-a_n} \leq b_n} \frac{b_n^2}{\big((x-a_n)^2+b_n^2\big)^2}\leq
\bigg(\sum_n\frac{b_n}{(x-a_n)^2+b_n^2}\bigg)^2 = (\phi'(x))^2.
\]

To obtain the second estimate, we observe that  for this case   $1\lesssim \phi'(x) |x-a_n|$.
Indeed, first suppose that $|\phi (x)-\phi (a_n)|> 1$. Letting $u$ be the point between $a_n$ and $x$ such that
$\abs{\phi(x) - \phi(u)} = 1$, it readily follows that
\begin{equation*}
	1 = \abs{\phi(x) - \phi(u)} \simeq \phi'(x) \abs{x-u} \leq \phi'(x) \abs{x-a_n}
\end{equation*}
On the other hand, if
$|\phi(x)-\phi(a_n)|\leq 1$, then we know that $\phi'(a_n)\sim\phi' (x)$. Since
$1\leq b_n \phi'(a_n)$ (always!),  the condition $b_n<|x-a_n|$
gives the required estimate.

It now follows that
\[
B \leq
\sum_{|x-a_n|>b_n}\frac{b_n}{(x-a_n)^2+b_n^2}\cdot\frac{1}{|x-a_n|}\lesssim
\phi'(x)\sum_n\frac{b_n}{(x-a_n)^2+b_n^2} = (\phi'(x))^2.\]

         The converse implication holds in general for any increasing function $\phi.$ Indeed,
         let $d_\phi(x,y)<1$ and suppose that $\phi'(x)<\phi'(y)$, then
\begin{equation*}
    \log \frac{\phi'(y)}{\phi'(x)}  =
      \left| \log  \frac{\phi'(y)}{\phi'(x)} \right|=
      \left| \int_x^y \frac{\phi''(t)}{\phi'(t)}dt   \right|
    \le
      C \left| \int_x^y \phi'(t) dt   \right|\le   C.
\end{equation*}
\end{proof}


    For inner functions $\Theta$ on the unit disk $\D$ with phase $\phi$, it is proved by Aleksandrov \cite{aleksandrov1999}
    that condition $(c)$ above, under some mild additional restrictions that are automatically satisfied when $\Theta$ is a meromorphic Blaschke product, is equivalent to $\Theta$ being one component.
%
	Following the argument of Aleksandrov, it is clear that  the same result holds on the upper half-plane as long as one makes the additional hypothesis that $\Theta  = E^\ast/E$ has infinity in its spectrum
	(cf.  \cite[Theorem 3.4]{aleksandrov1994simple}). Hence, we obtain the following consequence. 

\begin{corollary} 					\label{aleksandrov corollary}
	Let $E$ be a Hermite-Biehler function with phase $\phi$, and suppose that a subsequence of zeroes converges to infinity.
	Then $\phi'(x)\dif x$ gives a locally doubling measure if and only if the inner function $E^\ast/E$ satisfies the one component condition.
\end{corollary}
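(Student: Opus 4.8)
The plan is to obtain the corollary by feeding the characterisation of Lemma \ref{local doubling equivalences} into the half-plane version of Aleksandrov's theorem discussed immediately above. By that lemma, the property that $\phi'(x)\dif x$ be locally doubling is equivalent to the pointwise bound $|\phi''(x)| \le C(\phi'(x))^2$, that is, to condition (c) there; so it suffices to prove that, under the stated hypotheses, this bound holds if and only if $\Theta := E^\ast/E$ satisfies the one component condition.

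First I would check that $\Theta$ is the kind of inner function to which Aleksandrov's theorem applies. Since $E$ is entire, its zero set $Z(E)$ is discrete with no finite accumulation point, and since $E$ is Hermite--Biehler with no real zeros one has $Z(E)\subset\{\Im z<0\}$; the zeros of $\Theta$ in $\C_+$ are precisely the reflected points $\{\bar w : w\in Z(E)\}$, which again have no finite accumulation point. Hence $\Theta$ continues meromorphically across every finite point of $\R$, and by the discussion in the introduction its singular inner factor is carried by $\{\infty\}$ alone; under the running assumption $\sigma=0$ that factor is trivial, so $\Theta$ is a meromorphic Blaschke product. The hypothesis that a subsequence of the zeros of $E$ tends to infinity is then exactly what forces $\infty$ into the spectrum of $\Theta$: the function $\Theta$ cannot be continued analytically past $\infty$, while it continues across each finite point of $\R$, so its spectrum equals $\{\infty\}$. (This hypothesis is genuinely needed: it rules out the degenerate case of finitely many zeros, where $\Theta$ is a finite Blaschke product and the equivalence can fail.) Consequently the extra requirement in \cite[Theorem 3.4]{aleksandrov1994simple} is met, and the ``mild restrictions'' in the disk statement of Aleksandrov \cite{aleksandrov1999} are automatic for meromorphic Blaschke products.

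It then remains only to chain equivalences. By the half-plane form of Aleksandrov's result, for $\Theta=E^\ast/E$ the bound $|\phi''(x)|\le C(\phi'(x))^2$ holds if and only if $\Theta$ satisfies the one component condition; combined with the implication $(a)\Leftrightarrow(c)$ of Lemma \ref{local doubling equivalences}, this shows that $\phi'(x)\dif x$ is locally doubling if and only if $E^\ast/E$ is one component, which is the assertion of the corollary.

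The argument is essentially bookkeeping, and the one place I expect to need care is the passage from the disk to the half-plane. Aleksandrov's theorem is naturally formulated on $\D$ in terms of the boundary phase of $\Theta$, and condition (c) is not obviously conformally invariant; however, the one component condition \emph{is} conformally invariant, since it asks only that a sublevel set of $|\Theta|$ be connected. Thus I would either cite the half-plane formulation directly, as in \cite[Theorem 3.4]{aleksandrov1994simple}, or transport the disk statement along the Cayley map that sends the accumulation point of the zeros to $\infty$, verifying that the half-plane condition on $\phi$ corresponds to the disk condition on the boundary phase of the transported product. Once it is observed that ``a subsequence of zeros tends to infinity'' is precisely the hypothesis placing $\infty$ in the spectrum of $\Theta$, either route is routine.
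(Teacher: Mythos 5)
Your argument is correct and follows essentially the same route as the paper: combine the equivalence $(a)\Leftrightarrow(c)$ of Lemma \ref{local doubling equivalences} with the half-plane form of Aleksandrov's theorem, the hypothesis that a subsequence of zeros tends to infinity serving precisely to place $\infty$ in the spectrum of $\Theta=E^\ast/E$ (cf.\ \cite[Theorem 3.4]{aleksandrov1994simple}). The paper states this only in the short paragraph preceding the corollary, so your write-up merely supplies the same bookkeeping (meromorphic Blaschke product since $\sigma=0$, conformal invariance of the one component condition) in more detail.
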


We point out that there exists a Hermite-Biehler function $E$ where the phase $\phi$ satisfies the local doubling condition, but not the doubling condition. 
This example is obtained by letting $E$ be the entire function with zeroes $z_n = - \im 2^n$ for $n \in \N$. Indeed,  then $\phi'(x) \simeq (1+ x)^{-1}$.

Rewriting parts of the two previous lemmas in the notation of the metric $d_\phi(x,y)=|\phi(x)-\phi(y)|$ we easily obtain the
following. It is analogue to \cite[Lemma 4]{marco_massaneda_ortega-cerda2003}.


\begin{lemma}                                       \label{technicallemma-2}
   Let $E$ be a Hermite-Biehler function with phase $\phi$. Then there are constants depending   on $r>0$ such that the following hold:
\begin{itemize}
\item[a)] If the measure $\phi'(x)\dif x$ is locally doubling and $d_\phi(x,y)\leq r$, then
    $d_\phi (x,y)\simeq \phi'(x)|x-y|$.
\item[b)] If the measure $\phi'(x) \dif x $ is doubling and $d_\phi (x,y)>r$, then
    $(\phi'(x)|x-y|)^{1/\alpha}\lesssim d_\phi (x,y)\lesssim
    (\phi'(x)|x-y|)^{\alpha}$, for some $\alpha>0$ depending only on the doubling constant of the measure.
\end{itemize}
\end{lemma}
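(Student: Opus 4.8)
The plan is to deduce part (a), together with the "local'' subcase of part (b), from the equivalence $(a)\iff(b)$ in Lemma \ref{local doubling equivalences} (so that $\phi'(s)\simeq\phi'(x)$ whenever $|\phi(s)-\phi(x)|\le 1$, with an \emph{absolute} constant), and to deduce the genuinely non-local estimate in (b) from the comparison inequality (\ref{doubling}) of Lemma \ref{lemma gamma doubling}, applied to a carefully chosen pair of intervals. Throughout I assume $x<y$ and write $I=[x,y]$, $\ell=|x-y|$ and $D=d_\phi(x,y)=\mu(I)$.

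For part (a): since $D\le r$, split $I$ into $N:=\lceil r\rceil$ consecutive subintervals $[t_{k-1},t_k]$, $t_0=x$, $t_N=y$, on each of which $\phi$ increases by exactly $D/N\le 1$. Lemma \ref{local doubling equivalences} gives $\phi'(s)\simeq\phi'(t_{k-1})$ on each of them with an absolute constant $C$; chaining these $\le N$ comparisons yields $\phi'(s)\simeq\phi'(x)$ for all $s\in I$ with constant $C^{\lceil r\rceil}$, and integrating over $I$ gives $D=\int_I\phi'\simeq\phi'(x)\ell$. The point is that a single application of Lemma \ref{local doubling equivalences} does not suffice, since the $\phi$-variation across $I$ may be as large as $r>1$; it is precisely this chaining that produces the $r$-dependence allowed in the statement.

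For part (b): if $r<D\le\max\{r,1\}$ (which can only happen when $r<1$), the $\phi$-variation across $I$ is $\le 1$, so, exactly as in (a), $\phi'(s)\simeq\phi'(x)$ on $I$, hence $D\simeq\phi'(x)\ell$ with both sides pinched between constants depending on $r$, and the claimed polynomial bounds are immediate. So assume $D>1$. Let $I_{\mathrm{big}}$ be the interval centred at $x$ of radius $\ell$; then $I\subset I_{\mathrm{big}}$, and since $\mu(I_{\mathrm{big}})\ge\mu(I)=D>1$ there is $\rho\in(0,\ell)$ with $\mu([x-\rho,x+\rho])=1$ — set $I_{\mathrm{sm}}=[x-\rho,x+\rho]$. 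As $\phi$ varies by exactly $1$ on $I_{\mathrm{sm}}$, Lemma \ref{local doubling equivalences} gives $\phi'\simeq\phi'(x)$ there, whence $\rho\simeq 1/\phi'(x)$. Comparing $I_{\mathrm{big}}$ with $I$ (they intersect, radius ratio $2$) via (\ref{doubling}) gives $\mu(I_{\mathrm{big}})\simeq\mu(I)=D$; comparing $I_{\mathrm{big}}$ with $I_{\mathrm{sm}}$ (they intersect, $\rho<\ell$) via (\ref{doubling}), and substituting $\mu(I_{\mathrm{big}})\simeq D$, $\mu(I_{\mathrm{sm}})=1$ and radius ratio $\ell/\rho\simeq\phi'(x)\ell=\phi'(x)|x-y|$, yields
\[
 D^{\gamma}\lesssim \phi'(x)\,|x-y|\lesssim D^{1/\gamma},
\]
with $\gamma$ the exponent from Lemma \ref{lemma gamma doubling}, depending only on the doubling constant of $\mu$. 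Setting $\alpha:=1/\gamma$ and solving these two inequalities for $D$ gives $(\phi'(x)|x-y|)^{1/\alpha}\lesssim d_\phi(x,y)\lesssim(\phi'(x)|x-y|)^{\alpha}$, as required; note that in this regime the implied constants are absolute, the $r$-dependence entering only through the trivial small-$D$ case.

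The step that needs the most care is the choice of the two reference intervals in (b): one needs, simultaneously, that they intersect, that their radius ratio is comparable to $\phi'(x)|x-y|$, that their $\mu$-measures are comparable to $d_\phi(x,y)$ and to $1$ respectively, and that the inner interval $I_{\mathrm{sm}}$ is strictly smaller than $I_{\mathrm{big}}$ — and it is this last point, i.e.\ $\rho<\ell$, where the hypothesis $d_\phi(x,y)>r$ (with $r$ bounded below) is actually used. The two calculations $\rho\simeq 1/\phi'(x)$ and $\mu(I_{\mathrm{big}})\simeq d_\phi(x,y)$ are the real content; once they are in place, (b) is a direct substitution into the doubling comparison (\ref{doubling}), and (a) is the elementary chaining argument above.
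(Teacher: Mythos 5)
Your argument is correct and takes essentially the route the paper intends: the paper omits a detailed proof, saying only that the lemma follows by rewriting Lemma \ref{lemma gamma doubling} and Lemma \ref{local doubling equivalences} in the metric $d_\phi$ (as in the analogous Fock-space lemma), and your chaining of the local comparability $\phi'(s)\simeq\phi'(x)$ for part (a), together with the application of the exponent estimate \eqref{doubling} to the intervals $I_{\mathrm{big}}$, $I$ and $I_{\mathrm{sm}}$ for part (b), is exactly that argument carried out in full. In particular the key identifications $\rho\simeq 1/\phi'(x)$, $\mu(I_{\mathrm{big}})\simeq d_\phi(x,y)$ and the choice $\alpha=1/\gamma$ are all correct, with the $r$-dependence of the constants entering only where the statement allows it.
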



\begin{remark}                                                          \label{technicallemma}
    When $\phi'(x) \dif x$ is a doubling measure, it follows by Lemma \ref{technicallemma-2}   that there exists some
    $\delta>0$, such that  for $r>0$ and  $d_\phi (x,y)>r$, then
\[\quad \phi'(y)\lesssim d_\phi (x,y)^\delta  \phi'(x),\]
    with the constant depending on $r$.
\end{remark}

\begin{remark}  \label{miamadre}
	Observe that Lemma \ref{technicallemma-2} still holds if we merely assume $\phi$ to be a smooth increasing function satisfying condition $(b)$
	of Lemma \ref{local doubling equivalences}.  
\end{remark}

%
%

\subsection{Pointwise estimates of functions in de Branges spaces}                         \label{sec_Basic_properties}

    The following lemma, while simple,  explains the analogy between de Branges
    spaces with a doubling phase function and Fock spaces with a doubling laplacian.

\begin{lemma} 											\label{delta psi lemma}
	Let $E$ be a Hermite-Biehler function, and set 
\begin{equation} 				\label{usual psi definition}
	\Phi(z) := \begin{cases} \log \abs{E(z)} & \text{if} \; \Im z \geq 0, \\ \log \abs{E^*(z)} & \text{if} \; \Im z < 0. \end{cases}
\end{equation}
	Then
	\begin{equation*}
		 \Delta \Phi(z) = \begin{cases} 2 \phi'(z) & z \in \R \\ 0 & z \notin \R. \end{cases}
	\end{equation*}
\end{lemma}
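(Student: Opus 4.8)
The plan is to compute the distributional Laplacian of $\Phi$ directly from the definition, splitting the work according to the three regions $\Im z > 0$, $\Im z < 0$, and $z \in \R$. First I would observe that on the open upper half-plane $\Phi(z) = \log|E(z)|$ is harmonic, since $E$ is entire and zero-free there (indeed $E$ has no zeros in $\overline{\C_+}$: it has no real zeros by the Hermite-Biehler hypothesis, and $|E^\ast(z)| < |E(z)|$ forces $E(z) \neq 0$ for $\Im z > 0$). Thus $\Delta\Phi = 0$ on $\C_+$. Similarly, on the open lower half-plane $\Phi(z) = \log|E^\ast(z)|$ where $E^\ast(z) = \overline{E(\bar z)}$; the zeros of $E^\ast$ are the conjugates of the zeros of $E$, hence all lie in $\overline{\C_+}$, so $E^\ast$ is zero-free on $\C_-$ and $\log|E^\ast|$ is harmonic there. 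Hence $\Delta\Phi = 0$ off the real line, which is the second case of the conclusion.

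Next I would identify the singular part of $\Delta\Phi$ supported on $\R$. Since $\Phi$ is continuous across $\R$ (both pieces equal $\log|E(x)|$ on the real line, using $|E^\ast(x)| = |E(x)|$ for real $x$), the distributional Laplacian on a neighbourhood of a real point is, by the jump formula for the Laplacian of a piecewise-harmonic function, the measure $\big(\partial_y \Phi\big|_{y\to 0^+} - \partial_y \Phi\big|_{y\to 0^-}\big)\,\dif x$ carried by $\R$; that is, $\Delta\Phi = \big[\partial_y \log|E(x+iy)|\big]_{y=0^+} - \big[\partial_y \log|E^\ast(x+iy)|\big]_{y=0^-}$ as a density against $\dif x$. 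Now I would relate each normal derivative to $\phi'$. Writing $\log E(z) = \log|E(z)| + i\arg E(z)$ (locally, away from zeros, which is fine near $\R$), the Cauchy-Riemann equations give $\partial_y \log|E(z)| = -\partial_x \arg E(z)$. On $\R$ we have $E(x) = |E(x)|e^{-i\phi(x)}$, so $\arg E(x) = -\phi(x)$ and hence $\partial_y \log|E|\big|_{y=0^+} = \phi'(x)$. For the lower piece, a symmetric computation with $E^\ast$, using that $\arg E^\ast(x) = \overline{\arg E(x)}$-type relation (concretely $E^\ast(x) = \overline{E(x)} = |E(x)|e^{i\phi(x)}$ for real $x$, so $\arg E^\ast(x) = \phi(x)$) together with the Cauchy-Riemann relation for $E^\ast$, gives $\partial_y \log|E^\ast|\big|_{y=0^-} = -\phi'(x)$. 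Subtracting yields $\Delta\Phi = 2\phi'(x)$ on $\R$.

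An alternative, perhaps cleaner, route I would consider is to use the Poisson representation already recorded in the introduction: with $\sigma = 0$, $\phi'(x) = \sum_{z\in Z(E)} \Im \frac{1}{x-\bar z}$ is a sum of Poisson kernels $P_{\bar z}$ at the conjugated zeros (which lie in $\C_-$ relative to the real line from the perspective of the upper half-plane, i.e. $\bar z \in \overline{\C_-}$ so the kernels $P_{\bar z}$ are honest positive Poisson kernels for $\C_+$). Then $\log|E(z)|$ on $\C_+$ is, up to an additive harmonic (in fact affine, since $\sigma=0$) term, the Poisson extension of $\log|E(x)|$; but more to the point, $\log|E^\ast/E|$ is the logarithm of the modulus of the inner function $\Theta = E^\ast/E$, whose boundary spectral measure on $\R$ has density $2\phi'$ against $\dif x$ (this is exactly the content of $\phi' = \sum \Im\frac{1}{x-\bar z}$, each term contributing a harmonic measure). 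Unwinding $\Phi$ in terms of $\Theta$ and the harmonic pieces then gives the claim.

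The main obstacle is purely bookkeeping of signs and of which half-plane each object is harmonic in — in particular getting the factor $2$ correctly (it comes from the two one-sided normal derivatives contributing with opposite orientations, each equal in magnitude to $\phi'(x)$) and checking that $E$, $E^\ast$ are genuinely zero-free in the relevant open half-planes so that $\log|E|$, $\log|E^\ast|$ are smooth and harmonic there, legitimizing the jump formula. None of these steps is deep; the lemma is essentially a distributional identity, and I would present it as such, citing the Cauchy-Riemann equations and the standard formula $\Delta \log|g| = 2\pi \sum_{g(a)=0} \delta_a$ (with zeros counted with multiplicity) applied piecewise, the real-line contribution being the boundary jump rather than interior point masses.
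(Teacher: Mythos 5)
Your main argument is correct and is essentially the paper's proof: the Cauchy-Riemann equations give $\partial_y \log|E| = -\partial_x \arg E = \phi'$ on $\R$ (with the symmetric computation for $E^\ast$ from below), and Green's formula applied to the piecewise-harmonic, continuous function $\Phi$ turns the jump in the normal derivative into the density $2\phi'(x)\,\dif x$, while harmonicity off $\R$ follows since $E$ and $E^\ast$ are zero-free in the respective closed half-planes. (Only a cosmetic slip in your alternative sketch: the conjugated zeros $\bar z$ lie in $\C_+$, not $\overline{\C_-}$, which is precisely why the kernels are positive Poisson kernels for the upper half-plane.)
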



\begin{proof}
    The function $\log E$ is analytic in a neighborhood of $x\in \R$, so by
    the Cauchy-Riemann equations, $\partial_y (\log |E|)(x)=-\partial_x (\arg E)(x)=\phi'(x)$.
    The result now follows by an argument using Green's formula.
\end{proof}


    We now show that for functions in a de Branges space satisfying the local doubling property, a subharmonic-type estimate holds.
    This is a crucial tool, which in the context of Fock spaces corresponds to \cite[Lemma 19]{marco_massaneda_ortega-cerda2003}.


\begin{lemma}                                                       \label{CMBsec: subharmonicity lemma}
    Let $E$ be a Hermite-Biehler function with phase function $\phi$, and let $\Phi$ be given by \eqref{usual psi definition}.
%
%
%
	If $\phi'(x) \dif x$ is a locally doubling
    measure on $\R$,
    then there are constants only depending on  $r>0$ such that for all $f \in H(E)$ and $x \in \R$ we have
\begin{equation}
    \label{smooth} 	\hspace{0.7 cm} \Abs{\frac{f(x)}{E(x)}}^2    \lesssim \phi'(x)^2 \int_{D_\phi(x,r)}
    \abs{f(\xi)}^2 \e^{-2\Phi(\xi)} \dif m(\xi),
\end{equation}
	and
\begin{equation}
    \label{smoother}  \Abs{\left( \frac{f}{E}\right)'(x)}^2  \lesssim \phi'(x)^4 \int_{D_\phi(x,r)}
    \Abs{ f(\xi)}^2\e^{-2\Phi(\xi)} \dif m(\xi).
\end{equation}
\end{lemma}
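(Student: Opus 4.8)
The plan is to reduce the two pointwise estimates to a standard sub-mean-value property for $|f|^2 e^{-2\Phi}$, exactly as in the Fock space argument of Marco--Massaneda--Ortega-Cerd{\`a}. First I would observe that $\log|f|^2 - 2\Phi$ is subharmonic away from $\R$ (since $|f|$ is subharmonic and $\Phi$ is harmonic off $\R$ by Lemma \ref{delta psi lemma}), so the only issue is the behaviour across the real line, where $\Delta\Phi = 2\phi'$. The key point is that the measure $\phi'(x)\dif x$ restricted to a disk $D_\phi(x,r)$ has total mass comparable to $r$, hence bounded; so $2\Phi$ differs from a genuinely subharmonic function by a harmonic function whose gradient is controlled on $D_\phi(x,r)$, and $|f(\xi)|^2 e^{-2\Phi(\xi)}$ is, up to a bounded multiplicative distortion on that disk, a subharmonic function. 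The normalisation to write down is that, by Lemma \ref{technicallemma-2}(a), the Euclidean radius of $D_\phi(x,r)$ is comparable to $r/\phi'(x)$, so the Euclidean area of $D_\phi(x,r)$ is comparable to $r^2/\phi'(x)^2$.

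The main steps would be as follows. First, fix $x\in\R$ and $r>0$; set $\rho \simeq r/\phi'(x)$ for the Euclidean radius of $D:=D_\phi(x,r)$. Second, show that on the slightly larger disk $D_\phi(x,2r)$ one has $|\Phi(\xi) - \Phi(x)| \lesssim r$ with a constant depending only on $r$ and the local doubling constant: this is where one uses that the relevant Poisson/harmonic masses are bounded, together with Lemma \ref{technicallemma-2}. Consequently $e^{-2\Phi(\xi)} \simeq e^{-2\Phi(x)}$ on $D_\phi(x,2r)$. Third, apply the ordinary sub-mean-value inequality to the subharmonic function $|f|^2$ on the Euclidean disk centred at $x$ of radius $\rho$:
\begin{equation*}
    |f(x)|^2 \lesssim \frac{1}{\rho^2}\int_{D} |f(\xi)|^2 \dif m(\xi).
\end{equation*}
Fourth, combine these: divide by $|E(x)|^2 = e^{2\Phi(x)}$, replace $e^{2\Phi(x)}$ inside the integral by $e^{2\Phi(\xi)}$ up to constants, and use $1/\rho^2 \simeq \phi'(x)^2/r^2 \lesssim \phi'(x)^2$ to land exactly on \eqref{smooth}. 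For \eqref{smoother}, I would apply a Cauchy-estimate / sub-mean-value bound for the derivative of the analytic function $f/E$ on $\R$: on the half of $D$ one gets
\begin{equation*}
    \Abs{\left(\tfrac{f}{E}\right)'(x)}^2 \lesssim \frac{1}{\rho^2}\,\sup_{D_\phi(x,r/2)}\Abs{\tfrac{f}{E}}^2,
\end{equation*}
and then bound the supremum by applying \eqref{smooth} (with radius $r/2$) at each point of $D_\phi(x,r/2)$, using once more that $\phi'$ and the disks $D_\phi(\,\cdot\,,r/2)$ vary by bounded factors over $D_\phi(x,r/2)$ (Lemma \ref{local doubling equivalences}(b) and Lemma \ref{technicallemma-2}(a)). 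This produces the extra factor $\phi'(x)^2 \cdot (1/\rho^2) \simeq \phi'(x)^4/r^2 \lesssim \phi'(x)^4$, giving \eqref{smoother}.

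I expect the main obstacle to be the second step: controlling the oscillation of $\Phi$ across $\R$ on $D_\phi(x,2r)$ by a constant depending only on $r$. Off the real axis $\Phi$ is harmonic, but it is not harmonic on $D_\phi(x,2r)$ itself, and one must quantify how much $\Phi$ can drop or grow using only that the doubling (locally doubling) constant controls $\mu$ on intervals of $\mu$-measure $\lesssim r$. Concretely I would write $\Phi$ as a superposition of terms $\log|z - \overline{z_n}|$ (or the corresponding Poisson-balanced combinations) over the zeros $z_n = a_n + \im b_n$ of $E$, separate zeros according to whether $|\phi(x)-\phi(a_n)| \le 2r$ or not, and estimate each group: nearby zeros contribute boundedly because the total $\mu$-mass they account for is $\lesssim r$, and far zeros contribute a slowly varying harmonic function on $D_\phi(x,2r)$ whose gradient over that disk is $\lesssim \phi'(x)$ by the remark following Lemma \ref{technicallemma-2} (Remark \ref{technicallemma}) — so the variation of $\Phi$ over a set of Euclidean diameter $\lesssim r/\phi'(x)$ is $\lesssim r$. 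This is morally the half-plane analogue of \cite[Lemma 19]{marco_massaneda_ortega-cerda2003}, and the doubling hypothesis enters exactly here to keep all implied constants dependent only on $r$.
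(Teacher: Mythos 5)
Your reduction breaks down at the second step, and the failure is not just in the proposed proof of that step but in the claim itself: it is \emph{not} true that $\abs{\Phi(\xi)-\Phi(x)}\lesssim_r 1$ for $\xi\in D_\phi(x,2r)$, i.e.\ $\e^{-2\Phi}$ need not be comparable to a constant on $d_\phi$-disks. The doubling hypothesis controls only $\Delta\Phi=2\phi'\,\dif x$, hence the \emph{non-harmonic} part of $\Phi$ on the disk; the harmonic part can oscillate arbitrarily. Concretely, let $E_0$ be the sine-type Hermite--Biehler function with zeros $n-\im$, $n\in\Z$ (so $\phi_0'\simeq 1$, doubling, and $\log\abs{E_0}$ bounded on $\R$), and put $E(z)=\e^{-z^2}E_0(z)$. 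Since $(\e^{-z^2})^*=\e^{-z^2}$ is zero-free and positive on $\R$, $E$ is again Hermite--Biehler with the \emph{same} phase $\phi_0$, but now $\Phi(x)=\log\abs{E(x)}=-x^2+O(1)$, so on a unit disk around a large real $x$ the weight $\e^{-2\Phi}$ varies by a factor $\e^{c\abs{x}}$. This also shows that your justification via Remark~\ref{technicallemma} is misdirected: that remark bounds $\phi'(y)$ at distant points, not the tangential gradient of the harmonic part of $\Phi$, which (being essentially a Hilbert-transform-type object, or an arbitrary zero-free real factor as above) is not controlled pointwise by $\phi'$. Consequently the step ``divide by $\abs{E(x)}^2$ and replace $\e^{2\Phi(x)}$ by $\e^{2\Phi(\xi)}$ up to constants'' is invalid, and the same issue undermines the proposed sup-bound for \eqref{smoother} (where, in addition, \eqref{smooth} is only available at real points, and $\abs{f/E}$ does not coincide with $\abs{f}\e^{-\Phi}$ in the lower half of the disk).

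The missing idea is exactly the device the paper (following the Fock-space argument) uses to avoid comparing $\Phi$ to a constant: on the Euclidean disk $D(z,2\rho/\phi'(x))$ one writes, via Green's formula, $\Phi(\xi)-\Phi(z)=h(\xi)-B(\xi)$ with $h$ harmonic, $h(z)=0$, and $B$ the Green potential of $\Delta\Phi=2\phi'\,\dif x$ on the diameter; only $B$ needs to be bounded (by $C\rho$, using Lemma~\ref{local doubling equivalences} to freeze $\phi'$ on the diameter), while the possibly huge harmonic oscillation $h$ is absorbed by multiplying $f$ with $\e^{-H}$, $H$ an analytic completion of $h$, and applying subharmonicity to $\abs{\e^{-H}f}^2$; since $\Re H(z)=0$, no constant depending on the oscillation of $\Phi$ survives. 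This yields the off-axis estimate $\abs{f(z)\e^{-\Phi(z)}}^2\lesssim\phi'(x)^2\int_{D(z,\rho/\phi'(x))}\abs{f}^2\e^{-2\Phi}\dif m$, from which \eqref{smooth} follows, and \eqref{smoother} is then obtained by a Cauchy-integral estimate on a circle of radius $\epsilon/\phi'(x)$ using that same off-axis bound (not the real-line bound). Your outline as it stands cannot be repaired without introducing this harmonic correction, because the statement it relies on is false for the class of spaces considered.
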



\begin{proof}

    We first observe that an inequality of the type \eqref{smooth} follows immediately
    by subharmonicity for  a Euclidean disk centered at $z \in \C$, with radius $1/\phi'(\Re z)$, and which
    does not intersect $\R$. We now prove that such an inequality holds even if the disk intersects $\R$, and deduce the lemma from it.

    Given $\rho>0$, fix $z \in \C$ and set $\Re z = x$. Suppose that the euclidean disc $D(z, \rho/\phi'(x))$ intersects $\R$.
    By Green's formula, there exists a harmonic function $h$ on $D(z, 2\rho/\phi'(x))$ such that

\begin{equation} 								\label{definition of h}
    h(\xi) = \Phi(\xi) - \Phi(z)  + \underbrace{ \int_{D(z,2\rho/\phi'(x))}
    \big( G(\eta,\xi) - G(\eta,z)  \big)  \Delta \Phi(\eta) \dif m(\eta)}_{=:B(\xi)},
\end{equation}
    where $G(\eta,\xi)$ is Green's function for $D(z,2\rho/\phi'(x))$ with pole at $\xi$. Observe that $h(z) = 0$.

    We proceed to estimate $\abs{B(\xi)}$ for $\xi \in D(z,\rho/\phi'(x))$. Set $J = D(z,2\rho/\phi'(x)) \cap \R$.  By Lemma
    \ref{delta psi lemma} we have $\Delta \Phi(\eta) =2 \phi'(\eta)$, and so

\begin{equation*}
    B(\xi)= 2 \int_{J} (G(\eta, \xi) - G(\eta,z))   \phi'(\eta) \dif \eta.
\end{equation*}
    It is clear that $J \subset I(x, 2\rho/\phi'(x))$, so by Lemma \ref{local doubling equivalences} and
    the explicit form of the Green function, we have

\begin{align*}
    \int_{J}  G(\eta,\xi)   \phi'(\eta) \dif \eta
    &\simeq
    \phi'(x)  \int_{J}  G(\eta,\xi) \dif \eta
    \leq C\rho.
\end{align*}
    Applying this estimate to both $G(\eta, z)$ and $G(\eta,\xi)$, we obtain $\abs{B(\xi)} \leq C \rho$.

    Let $H = h + \im \tilde{h}$ be an analytic completion of $h$. Since $H$ is purely imaginary at the point $z$,
    we can use the estimate on $B$, and the subharmonicity of $\abs{ \e^{-H(z)}f(z)}^2$, to obtain

\begin{equation} 								\label{smoothest}
    \abs{ {f(z)} \e^{-\Phi(z)} }^2=
    \abs{ \e^{-H(z)}f(z)}^2 \e^{-2\Phi(z)}  \lesssim \phi'(x)^2 \int_{D(z,\rho/\phi'(x))}
    \abs{f(\xi)}^2 \e^{-2\Phi(\xi)} \dif m(\xi),
\end{equation}
    where the implicit constant only depends on $\rho$.

    The inequality \eqref{smooth} now follows by observing that there exists an $\rho >0$,
    depending only on $r$, such that the ball $D(x,2\rho/\phi'(x))$ is contained in $D_\phi(x,r)$. 
	
    For \eqref{smoother}, we use a simple contour argument involving the Cauchy integral formula, and take the derivative  to see that  for any $\epsilon >0$
\begin{equation*}
	\left( \frac{f}{E} \right)'(x) = \frac{1}{2\pi \im} \int_{\abs{z-x}=\epsilon/\phi'(x)} \frac{f(z)\e^{-\Phi(z)} } {(z-x)^2} \dif z.
\end{equation*}
    The conclusion now follows   by choosing  $\epsilon >0$ small enough, using the triangle inequality and applying \eqref{smoothest}.
\end{proof}

%
%

\subsection{Carleson measures and a  Bernstein inequality}     \label{sec_Carleson_others}

    In this, and the next subsection, we use the Fock space point of view to obtain the
    basic tools needed to study sampling and interpolating sequences for de Branges spaces satisfying
    the doubling condition. Most of these results have been  proved by different methods in the context of one component model spaces.

    Recall that a positive Borel measure $\nu$ on $\C_+$ is called Carleson for the space $H^2(\C_+)$ if there exists a constant such that
    for all $f \in H^2(\C_+)$ 
\begin{equation*}
	\int_{\C_+} \abs{f(z)}^2 \dif \nu(z) \lesssim \norm{f}_{H^2(\C_+)}^2.
\end{equation*}
    A well-known characterization, due to Carleson \cite{carleson1962annals} and Shapiro and Shields \cite{shapiro_shields1961}, says that
    a positive Borel measure $\nu$ is a Carleson measure for $H^2(\C_+)$ if and only if $\nu(C_I) \lesssim \abs{I}$ for all bounded
    intervals $I \subset \R$,
    where $C_I$ is  the square in $\C_+$ with base   $I$. It is clear that the above holds true,
    with obvious modifications, for the space $H^2(\C_-)$.

	Carleson measures for model spaces were first considered by Cohn \cite{cohn1982, cohn1986}. For positive measures
    supported on $\R$  this notion carries over directly to de Branges spaces:  we say that a measure $\nu$
    is a Carleson measure for $H(E)$ if
\begin{equation*}
	\int_{\R} \frac{|f(x)|^2}{|E(x)|^2}d\nu(x)\lesssim \| f \|_{H(E)}^2, \qquad f \in H(E).
\end{equation*}


We begin with a simple lemma.

\begin{lemma}                       \label{CMBsec: carleson lemma}
    Let $E$ be a Hermite-Biehler function with phase function $\phi$, and 
    %
    set
 	\begin{equation*}
		D=\bigcup_{t\in \R} D_\phi(t,1).
	\end{equation*}
    If    $\phi'(x) \dif x$ is a locally doubling
    measure on $\R$,
then the measure $$\dif \nu^\pm(z) = \phi'(\Re z)\chi_{D \cap \C_\pm}(z) \dif m(z)$$ is Carleson for $H^2 (\C_\pm)$.
\end{lemma}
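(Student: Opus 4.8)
The plan is to reduce the statement to the classical box criterion of Carleson \cite{carleson1962annals} and Shapiro--Shields \cite{shapiro_shields1961}: it suffices to produce a constant with $\nu^+(C_I) \lesssim |I|$ for every bounded interval $I \subset \R$, where $C_I = \{ x + \im y :\ x \in I,\ 0 < y < |I| \}$ denotes the Carleson square over $I$. The assertion for $\nu^-$ and $H^2(\C_-)$ will then follow by reflecting in the real axis: each disk $D_\phi(t,1)$ is centered on $\R$, so that $D$ is symmetric about $\R$, and the density $\phi'(\Re z)$ is invariant under $z \mapsto \bar z$, so $\nu^-$ is the mirror image of $\nu^+$.

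The geometric core of the proof will be the inclusion
\[
  D \cap \C_+ \subseteq \big\{ x + \im y :\ 0 < y \leq c/\phi'(x) \big\}
\]
for some $c > 0$ depending only on the local doubling constant. To establish it I would fix $z = x + \im y \in D_\phi(t,1) \cap \C_+$. Since $D_\phi(t,1)$ has the segment $I_\phi(t,1)$ as its diameter, it is the disk centred at the midpoint $m_t$ of $I_\phi(t,1)$ with radius $\rho_t = |I_\phi(t,1)|/2$. Applying Lemma \ref{technicallemma-2}~a) with $r = 1$ at an endpoint of $I_\phi(t,1)$ gives $1 \simeq \phi'(t)\,|I_\phi(t,1)|$, hence $\rho_t \simeq 1/\phi'(t)$. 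As $z \in D_\phi(t,1)$, its real part $x$ lies in $I_\phi(t,1)$, so $d_\phi(x,t) < 1$, and Lemma \ref{local doubling equivalences}~(b) gives $\phi'(x) \simeq \phi'(t)$. Consequently $0 < y \leq |z - m_t| < \rho_t \simeq 1/\phi'(t) \simeq 1/\phi'(x)$, which is the claimed inclusion.

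Granting this, the box estimate becomes an elementary application of Fubini's theorem. The part of $C_I$ lying in $D \cap \C_+$ is contained in $\{ x + \im y :\ x \in I,\ 0 < y < \min(|I|,\ c/\phi'(x)) \}$, whence
\[
  \nu^+(C_I) \leq \int_I \phi'(x)\, \min\!\big( |I|,\ c/\phi'(x) \big)\, \dif x \leq \int_I c\, \dif x = c\,|I|,
\]
where I used $\phi'(x)\,\min(|I|,\ c/\phi'(x)) \leq \phi'(x)\cdot c/\phi'(x) = c$. This is precisely the Carleson box condition, and the lemma follows.

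The only step that requires genuine work is the geometric inclusion of the second paragraph, and this is exactly where the local doubling hypothesis is used --- through $\rho_t \simeq 1/\phi'(t)$ (Lemma \ref{technicallemma-2}) and through the comparability of $\phi'$ at $d_\phi$-close points (Lemma \ref{local doubling equivalences}). Everything after that is automatic, essentially because the weight $\phi'(\Re z)$ defining $\nu^\pm$ is calibrated to cancel the height $1/\phi'(\Re z)$ of the region $D$ above the real axis.
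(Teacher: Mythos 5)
Your proof is correct, and it rests on the same reduction as the paper: verify the Carleson--Shapiro--Shields box condition $\nu^{+}(C_I)\lesssim |I|$, using Lemma \ref{technicallemma-2}~a) and Lemma \ref{local doubling equivalences}~(b). The internal organization, however, is genuinely different. The paper argues by cases: for $\mu(I)\le 1$ it integrates $\phi'$ over the whole square, using $\phi'\simeq\phi'(t)$ on $I$ and $\phi'(t)\,|I|\lesssim 1$ (the set $D$ plays no role there), while for $\mu(I)>1$ it covers $I$ by intervals $I_j$ with $\mu(I_j)=1$ and bounds the mass over each $I_j$ by the rectangle of height $\sup_{s\in I_j}1/\phi'(s)$, leaving implicit the geometric fact that $D\cap\C_+$ above $I_j$ fits inside (a fixed dilate of) that rectangle. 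You isolate exactly that fact once and for all, proving $D\cap\C_+\subseteq\{x+\im y:\ 0<y\le c/\phi'(x)\}$, after which the box estimate is a single Fubini computation with no case split, and you also make the $\nu^-$ statement explicit via reflection symmetry, where the paper only says a similar argument works. Your version is a bit cleaner and makes transparent the cancellation between the weight $\phi'(\Re z)$ and the height $\simeq 1/\phi'(\Re z)$ of $D$; the paper's covering by $\mu$-unit intervals is closer in spirit to the discrete variant recorded in Remark \ref{CMBsec: discrete carleson remark}. One cosmetic point: Lemma \ref{technicallemma-2}~a) applied at an endpoint $e$ of $I_\phi(t,1)$ gives $1\simeq \phi'(t)\,|t-e|$, so $|I_\phi(t,1)|\simeq 1/\phi'(t)$ follows by adding the two half-intervals; your statement $1\simeq\phi'(t)\,|I_\phi(t,1)|$ is correct up to this harmless factor.
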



\begin{proof}
    Let $I \subset \R$ be an interval with center $t\in \R$, and denote by $C_I$ the Carleson square with base $I$. First, suppose that $\mu(I) \leq 1$.
    By Lemma \ref{local doubling equivalences}, we have $\phi'(t) \simeq \phi'(s)$ for all $s \in I$, and
    furthermore   $\abs{I} \phi'(t) \lesssim 1$. Hence,
\begin{equation*}
	\nu^+(C_I) \leq \int_{C_I} \phi'(\Re z) \dif m(z) \lesssim \phi'(t) \abs{I}^2 \lesssim \abs{I}.
\end{equation*}
	Next, suppose that $\mu(I) > 1$. Let $\{I_j\}_{j=1}^N$ be a   cover of $I$ by intervals with disjoint interiors such that $\mu(I_j)=1$. 
	It follows from the doubling condition   that
\begin{equation*}
	\abs{I} \simeq \sum_{j=1}^N \abs{I_j},
\end{equation*}
with a uniform control of the implicit constants and $N$.

	Let $\widetilde{C_{I_j}}$ denote the rectangle with base $I_j$ and height $h_j = \sup_{s \in I_j} 1/\phi'(s)$.
	Since $\phi'(t) \simeq \phi'(s)$ for $s,t \in I_j$, it now follows that
\begin{equation*}
		\nu^+ (C_I)
		\leq
		 \sum_{j=1}^N \int_{\widetilde{C_{I_j}}}\phi'(\Re z) \dif m(z)\simeq \sum_{j=1}^N \abs{I_j} \simeq \abs{I}.
\end{equation*}
    For   $\nu^-$ a similar argument works.
\end{proof}


\begin{remark}              \label{CMBsec: discrete carleson remark}
	A discrete version of this result also holds i.e. if $\{t_n\}$ is a $\phi-$separated sequence of points in $\R.$ Then
    $\dif \nu^{\pm}(z) = \sum_{n \in \Z} \phi'(t_n) \chi_{D_\phi(t_n, 1) \cap \C_+}(z) \dif m(z)$ 
	is a Carleson measure on $H^2(\C_{\pm})$.
\end{remark}

    The following Bernstein-type inequality is a special case of \cite[Corollary 1.5]{baranov2005bernstein}.
    We include a simple proof using our techniques.


\begin{lemma}                                                     \label{CMBsec: bernstein lemma}
    Let $E$ be a Hermite-Biehler function with phase function $\phi$.
 If $\phi'(x) \dif x$ is a locally doubling
    measure on $\R$,  then there exists a constant $C>0$ such that
\begin{equation*}
	\Norm{\frac{(f/E)'}{\phi'}}_{L^2 (\R)}\leq C\|f\|_{H(E)},\qquad f\in H(E).
\end{equation*}
\end{lemma}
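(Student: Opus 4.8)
The plan is to integrate the pointwise derivative bound \eqref{smoother} of Lemma \ref{CMBsec: subharmonicity lemma}, transfer the resulting integral to $\C$ by Tonelli, and finish with the Carleson embedding of Lemma \ref{CMBsec: carleson lemma}. Fix a radius $r>0$, small, to be chosen depending only on the (local) doubling constants. Dividing \eqref{smoother} by $\phi'(x)^2$ and integrating in $x$ over $\R$ gives
\[
\int_\R \Abs{\frac{(f/E)'(x)}{\phi'(x)}}^2 dx \;\lesssim\; \int_\R \phi'(x)^2 \int_{D_\phi(x,r)} \abs{f(\xi)}^2 \e^{-2\Phi(\xi)}\,\dif m(\xi)\,\dif x,
\]
and I would then exchange the order of integration, writing the inner region, for fixed $\xi=u+\im v$, as $S_\xi=\{x\in\R: \xi\in D_\phi(x,r)\}$.

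The key step is to estimate $S_\xi$ and the weight over it. Since the real trace of $D_\phi(x,r)$ is $I_\phi(x,r)\ni u$, membership $\xi\in D_\phi(x,r)$ forces $x\in I_\phi(u,r)$ and $\abs{v}\le \tfrac12\abs{I_\phi(x,r)}$. Assuming $r\le 1$, Lemma \ref{technicallemma-2}(a) and Lemma \ref{local doubling equivalences} give $\abs{I_\phi(x,r)}\simeq r/\phi'(x)\simeq r/\phi'(u)$ and $\phi'(x)\simeq\phi'(u)$ throughout $I_\phi(u,r)$. Hence
\[
\int_{S_\xi}\phi'(x)^2\,\dif x\;\le\;\int_{I_\phi(u,r)}\phi'(x)^2\,\dif x\;\lesssim\;\phi'(u)^2\abs{I_\phi(u,r)}\;\simeq\;r\,\phi'(\Re\xi)\;\lesssim\;\phi'(\Re\xi),
\]
and $S_\xi=\emptyset$ unless $\abs{\Im\xi}\lesssim r/\phi'(\Re\xi)$; the latter condition places $\xi$ in $D_\phi(\Re\xi,C(r))$ with $C(r)$ proportional to $r$. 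Choosing $r$ small enough that $C(r)\le 1$ (and using that $D_\phi(t,s)\subseteq D_\phi(t,1)$ for $s\le 1$, since nested diameters give nested Thales disks), every such $\xi$ lies in $D=\bigcup_t D_\phi(t,1)$. Combining, after Tonelli,
\[
\int_\R \Abs{\frac{(f/E)'(x)}{\phi'(x)}}^2 dx \;\lesssim\; \int_D \abs{f(\xi)}^2 \e^{-2\Phi(\xi)}\,\phi'(\Re\xi)\,\dif m(\xi).
\]

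Finally I would split the right-hand side over $D\cap\C_+$ and $D\cap\C_-$ and apply Lemma \ref{CMBsec: carleson lemma}. On $\C_+$ one has $\abs{f(\xi)}\e^{-\Phi(\xi)}=\abs{(f/E)(\xi)}$ with $f/E\in H^2(\C_+)$, so the Carleson property of $\nu^+$ bounds that piece by $\lesssim\Norm{f/E}_{H^2(\C_+)}^2=\Norm{f}_{H(E)}^2$; on $\C_-$ one has $\abs{f(\xi)}\e^{-\Phi(\xi)}=\abs{(f/E^\ast)(\xi)}$ with $f/E^\ast=(f^\ast/E)^\ast\in H^2(\C_-)$, and $\nu^-$ bounds that piece by $\lesssim\Norm{f^\ast/E}_{H^2(\C_+)}^2=\Norm{f}_{H(E)}^2$. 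Adding the two estimates gives the claim. The main obstacle is the geometric bookkeeping of the middle step: controlling $S_\xi$ by an interval about $\Re\xi$ of $\phi$-length $\lesssim r$ on which $\phi'$ is essentially constant, and checking that all $\xi$ with $S_\xi\ne\emptyset$ sit inside $D$ once $r$ is taken small — this is precisely where the local doubling hypothesis is used, through Lemmas \ref{technicallemma-2} and \ref{local doubling equivalences}.
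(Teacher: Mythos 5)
Your argument is correct and follows essentially the same route as the paper's proof: apply the derivative estimate \eqref{smoother} of Lemma \ref{CMBsec: subharmonicity lemma}, exchange the order of integration, use the local doubling property to bound the weight $\int \phi'(x)^2\,\dif x$ over the fibre by $\phi'(\Re\xi)$, and conclude with the Carleson embedding of Lemma \ref{CMBsec: carleson lemma} applied on $\C_+$ and $\C_-$. The only difference is cosmetic: the paper takes $r=1$ directly (so containment in $D$ is automatic), whereas you take $r$ small, which is harmless but unnecessary.
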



\begin{proof}
    Let $D=\bigcup_{t\in \R} D_\phi(t,1)$. Observe that
    $z \in D_\phi(t,1)$ implies $t \in I_\phi(\Re z,1)$. 
    Lemma \ref{CMBsec: subharmonicity lemma}, followed by Fubini's theorem yields
\begin{align*}
    \Norm{\frac{(f/E)'}{\phi'}}^2_{L^2(\R)} &\lesssim
	\int_{\R} \phi'(t)^2 \int_{D_\phi(t,1)}|f(z)|^2e^{-2\Phi(z)}dm(z)dt \\
       &
    \lesssim
    \int_D |f(z)|^2 e^{-2\Phi(z)}\left( \int_{I (\Re z,c)} \phi'(t) dt \right) \phi'(\Re z) dm(z) \\
    &=
    \int_D |f(z)|^2 e^{-2\Phi(z)}  \phi'(\Re z) dm(z).
\end{align*}
    The conclusion now follows by Lemma \ref{CMBsec: carleson lemma}.
\end{proof}


\begin{remark} 
	The above lemma also holds for any $1\le p<  \infty$ using the obvious generalizations.
\end{remark}

    A well-known result by Volberg and Treil \cite{volberg_treil1988} in the context of one component model
    spaces implies the following characterization.  


\begin{lemma}                                                   \label{carleson_measures_lemma}
    Let $E$ be a Hermite-Biehler function with   phase function $\phi$.
    If  $\phi'(x)\dif x$ is
    a locally doubling measure on $\R$, then
    a positive Borel measure $\nu$ with support in $\R$ is a Carleson measure for the space $H(E)$ 
    if and only if
\begin{equation}                        
    \int_{I}\phi'(x)d\nu	(x)\lesssim 1
\end{equation}
    for all intervals $I \subset \R$ with $\int_{I}\phi'(x)dx=1$.
\end{lemma}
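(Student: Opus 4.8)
The plan is to transfer the Volberg--Treil characterization of Carleson measures for one component model spaces to the de Branges setting via the unitary map $f \mapsto f/E$ between $H(E)$ and $K^2_\Theta$ with $\Theta = E^*/E$. First I would record that, since $\phi'(x)\dif x$ is locally doubling, Lemma~\ref{local doubling equivalences} and Corollary~\ref{aleksandrov corollary} show that $\Theta$ satisfies the one component condition (the hypothesis that some subsequence of zeros tends to infinity is automatic here, since $\Theta$ has infinity in its spectrum by construction). For such model spaces, the Volberg--Treil theorem \cite{volberg_treil1988} states that a positive measure $\nu$ on $\R$ is Carleson for $K^2_\Theta$ if and only if it is Carleson for $H^2(\C_+)$ on the level sets of $\Theta$, or equivalently — in the one component case — satisfies $\nu(I) \lesssim \abs{I}$ for all intervals $I$ such that $\abs{\Theta}$ stays bounded away from $1$ on the Carleson square $C_I$ (equivalently, with $1-\abs{\Theta(z_I)}\gtrsim 1$ at the top $z_I$ of the square). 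The task is then purely to translate the geometric condition ``$\abs{\Theta}$ bounded away from $1$ on $C_I$'' into the condition ``$\mu(I)=\int_I \phi'\,\dif x \simeq 1$.''

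For that translation I would use the relation between $1-\abs{\Theta(z)}^2$ and the Poisson-type quantity governing $\phi'$. Writing $z = x+iy$, one has $\abs{\Theta(z)}^2 = \abs{E^*(z)/E(z)}^2 = \e^{-2(\Phi(z)-\Phi(\bar z))}$ using the function $\Phi$ of Lemma~\ref{delta psi lemma}, and by Green's/Poisson representation $-\log\abs{\Theta(z)} = \int_\R P_y(x-t)\,\phi'(t)\,\dif t$ where $P_y$ is the Poisson kernel for $\C_+$. Standard estimates for doubling (hence locally doubling) measures then give $-\log\abs{\Theta(z)} \simeq \mu(I(x,y))/|I(x,y)|$ up to constants when this quantity is bounded, where $I(x,y)$ is the interval of length $\simeq y$ centered at $x$; consequently $\abs{\Theta(z)}$ is bounded away from $1$ on $C_I$ precisely when $\mu(I)\simeq\abs{I}$, i.e.\ when $\mu(I)\lesssim 1$ after normalizing. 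Combining this equivalence with the Volberg--Treil criterion gives: $\nu$ is Carleson for $H(E)$ iff $\nu(I)\lesssim\abs{I}$ for all $I$ with $\mu(I)\lesssim 1$; and since $\abs{I}\simeq\mu(I)/\phi'$ on such intervals by local doubling (Lemma~\ref{local doubling equivalences}(b)), the condition $\nu(I)\lesssim\abs{I}$ rewrites as $\int_I \phi'\,\dif\nu \lesssim \mu(I)\lesssim 1$. A covering argument reduces the family of test intervals to exactly those with $\mu(I)=1$, yielding the stated criterion. The implication from the stated condition back to the Carleson property is the same chain run in reverse, using that $\phi'\simeq\text{const}$ on each normalized interval to pass between $\int_I\phi'\,\dif\nu$ and $\nu(I)/\abs{I}$.

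The main obstacle I anticipate is not any single hard estimate but the careful bookkeeping of constants in the equivalence $-\log\abs{\Theta(z)}\simeq\mu(I(x,y))/\abs{I(x,y)}$ and in matching the various normalizations: Volberg--Treil is typically stated with the level-set formulation $\{\abs{\Theta}<\epsilon\}$ for a fixed $\epsilon$, and one must check that for one component $\Theta$ this is genuinely equivalent to the ``square'' formulation for every sufficiently small parameter, with the Carleson constant depending only on $\epsilon$ and the one component constant. One should also be mildly careful that $\nu$ is supported on $\R$, so all Carleson squares are anchored on the line and the passage between $\nu(C_I)$ and $\nu(I)$ is trivial, but this simplifies rather than complicates matters. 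Everything else — the doubling estimates for the Poisson average and the covering argument — is routine given Lemmas~\ref{lemma gamma doubling} and~\ref{local doubling equivalences}.
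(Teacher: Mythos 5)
Your overall strategy is the one the paper itself intends (the paper offers no proof beyond citing Volberg--Treil together with the one-component equivalence of Corollary~\ref{aleksandrov corollary}), but the key translation step is wrong as written. First, the identity $-\log\abs{\Theta(x+iy)}=\int_\R P_y(x-t)\phi'(t)\,\dif t$ is false: the right-hand side is the harmonic extension of $\phi'$, which for a zero $w=a+ib$ of $\Theta$ contributes $\simeq \frac{y+b}{(x-a)^2+(y+b)^2}$, whereas $-\log\abs{\Theta}$ gets only $\simeq\frac{yb}{(x-a)^2+(y+b)^2}$; a zero lying far below the height $y$ contributes mass $\simeq 1$ to $\mu(I(x,y))$ but only $\simeq b/y\ll 1$ to $-\log\abs{\Theta(x+iy)}$. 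So the comparison of $-\log\abs{\Theta}$ with $\mu(I(x,y))$ is not a ``standard doubling estimate'' for Poisson averages; it needs the specific facts that every zero satisfies $b_n\geq 1/\phi'(a_n)$ and the local doubling of $\phi'$, and it only holds at the critical scale. Second, and more seriously, the test family you end up with has the inequality in the wrong direction. The Cohn/Volberg--Treil criterion for one-component $\Theta$ tests Carleson squares whose top point lies in a sub-level set $\{\abs{\Theta}<\epsilon\}$; under local doubling this corresponds to intervals with $\mu(I)\gtrsim 1$ (and a covering argument then reduces these to $\mu(I)=1$). Your condition ``$\nu(I)\lesssim\abs{I}$ for all $I$ with $\mu(I)\lesssim 1$'' is not a characterization: any unit point mass is a Carleson measure for $H(E)$ (point evaluations are bounded), yet it violates $\nu(I)\lesssim\abs{I}$ on arbitrarily short intervals, all of which have $\mu(I)\lesssim 1$; and your covering argument only shrinks intervals of large $\mu$-mass down to mass one, it cannot recover the condition on intervals of small $\mu$-mass. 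Relatedly, ``$\abs{\Theta}$ stays bounded away from $1$ on $C_I$'' is essentially vacuous, since the base of every Carleson square lies on $\R$ where $\abs{\Theta}=1$; the correct formulation concerns the top (or upper half) of the square only, and the equivalence between different $\epsilon$-levels there is exactly where the one-component hypothesis is used.

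A smaller point: the claim that ``infinity is in the spectrum by construction'' is not automatic. With $\sigma=0$, a Hermite--Biehler function with finitely many zeroes can still have locally doubling phase (e.g.\ a single zero gives $\phi'$ a Poisson kernel), and then infinity is not in the spectrum of $E^\ast/E$; this degenerate finite-dimensional case should either be excluded or handled trivially before invoking Corollary~\ref{aleksandrov corollary}. To repair the proof, replace your translation step by: (i) the elementary bound $b_n\geq 1/\phi'(a_n)$ plus local doubling shows that for intervals with $\mu(I)\simeq 1$ one has $1-\abs{\Theta}\gtrsim 1$ at the top of $C_I$, and conversely that the top of $C_I$ meets $\{\abs{\Theta}<\epsilon\}$ forces $\mu(I)\gtrsim c(\epsilon)$; (ii) on such intervals $\phi'\simeq \mu(I)/\abs{I}$, so $\int_I\phi'\,\dif\nu\simeq\nu(I)/\abs{I}$; (iii) cover intervals of large $\mu$-mass by unit-mass intervals.
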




%
%

\subsection{Basic results about sampling and interpolating sequences}
    The following result is an  immediate consequence of Lemma \ref{carleson_measures_lemma}.


\begin{corollary} \label{separation and sampling}
    Let $E$ be a Hermite-Biehler function with   phase function $\phi$.  If $\phi'(x) \dif x$ is a locally doubling measure on $\R$, then a real sequence $\Gamma$ is Bessel if and only if it is a finite union of uniformly $\phi-$separated sequences.
\end{corollary}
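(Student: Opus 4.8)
The plan is to recast the Bessel property as a Carleson-measure condition for an explicit discrete measure and then to apply Lemma~\ref{carleson_measures_lemma}, which reduces the statement to an elementary counting argument in the metric $d_\phi$.

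First I would record that $\|K_\gamma\|^2 = K(\gamma,\gamma) = |E(\gamma)|^2\phi'(\gamma)/\pi$, so that for a real sequence $\Gamma$ the Bessel inequality
\[
\sum_{\gamma\in\Gamma}\frac{|f(\gamma)|^2}{\|K_\gamma\|^2}\lesssim \|f\|_{H(E)}^2
\]
is, up to the harmless factor $\pi$, the same as
\[
\int_\R \frac{|f(x)|^2}{|E(x)|^2}\,d\nu(x)\lesssim \|f\|_{H(E)}^2,\qquad \nu:=\sum_{\gamma\in\Gamma}\frac{1}{\phi'(\gamma)}\,\delta_\gamma ,
\]
that is, $\Gamma$ is Bessel exactly when $\nu$ is a Carleson measure for $H(E)$ in the sense of Subsection~\ref{sec_Carleson_others}. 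Since $\phi'$ is continuous and strictly positive, $\nu$ is a genuine positive Borel measure on $\R$ as soon as $\Gamma$ is locally finite, while if $\Gamma$ is not locally finite then $\nu$ has infinite mass on some bounded interval and is trivially not Carleson (and $\Gamma$ is certainly not a finite union of $\phi$-separated sequences in that case either), so this degenerate case is consistent with the claim. Applying Lemma~\ref{carleson_measures_lemma} and using that
\[
\int_I \phi'(x)\,d\nu(x) = \sum_{\gamma\in\Gamma\cap I} 1 = |\Gamma\cap I| ,
\]
I get that $\Gamma$ is Bessel if and only if there is a constant $N$ with $|\Gamma\cap I|\le N$ for every interval $I$ with $\mu(I)=1$.

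It then remains to see that a uniform bound on the number of points in every interval of $\mu$-measure one is equivalent to $\Gamma$ being a finite union of uniformly $\phi$-separated sequences. One implication is immediate: if $\Gamma=\bigcup_{j=1}^M\Gamma_j$ with each $\Gamma_j$ being $\epsilon$-separated in $d_\phi$, then, since an interval $I$ with $\mu(I)=1$ is precisely an interval of length one in the coordinate $\lambda=\phi(x)$, each $\Gamma_j$ contributes at most $\lfloor 1/\epsilon\rfloor+1$ points to $\Gamma\cap I$, so $|\Gamma\cap I|\le M(\lfloor 1/\epsilon\rfloor+1)$. For the converse I would again pass to the coordinate $\lambda=\phi(x)$, in which $\mu$ becomes Lebesgue measure and $d_\phi$ becomes the ordinary distance, and list the points of $\phi(\Gamma)$ in increasing order as $(\lambda_n)$. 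If some interval of length one contained more than $N$ of these points we would contradict the hypothesis; hence $\lambda_{n+N}-\lambda_n\ge 1$ for all $n$, so the $N$ residue classes $\Lambda_j=\{\lambda_n:\ n\equiv j \pmod N\}$ are pairwise at distance at least one. Pulling these back by $\phi^{-1}$ exhibits $\Gamma$ as a union of $N$ uniformly $\phi$-separated sequences.

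The only points requiring a little care are bookkeeping in nature: checking that the passage from the Bessel sum to the Carleson integral for $\nu$ is an honest equivalence (including the locally infinite case), and that ``interval of $\mu$-measure one'' and ``unit interval in the $\phi$-coordinate'' correspond exactly, so that the counting argument may be run after the change of variables. The mathematical content of the corollary lies entirely in Lemma~\ref{carleson_measures_lemma}, so I do not expect a genuine obstacle here.
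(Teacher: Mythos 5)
Your proposal is correct and follows exactly the route the paper intends: the paper derives this corollary directly from Lemma~\ref{carleson_measures_lemma} by viewing the Bessel condition as the Carleson condition for the discrete measure $\sum_{\gamma}\phi'(\gamma)^{-1}\delta_\gamma$, which reduces to the uniform bound $\sup_{\mu(I)=1}|\Gamma\cap I|<\infty$, equivalent (via the change of variable $\lambda=\phi(x)$) to being a finite union of $\phi$-separated sequences. Your write-up supplies the bookkeeping the paper leaves implicit, and it is sound.
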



    It follows that we may always assume that sampling sequences are $\phi-$separated, which is a fact we need to prove our density results.
    We now establish some other basic results.

    The next lemma is a standard tool to prove perturbative results.
    Indeed, combined with standard arguments,
    stability results for Bessel sequences, sampling sequences and interpolating sequences
    follow (see e.g. \cite[chapter 4]{young1980}).

    While we include a simple proof of the lemma to illustrate the use of our techniques, we do not
    work out all of these consequences, instead we  refer the interested reader to  Baranov \cite{baranov2005stability},
    where such results  are obtained for  general one component model spaces using other tools.


 \begin{lemma} 											\label{SAsec: stability lemma}
   Let $E$ be a Hermite-Biehler function with   phase function $\phi$, and suppose that   $\{ \lambda_n \}$ is a real $\phi-$separated sequence.  If $\phi'(x) \dif x$ is a locally doubling measure on $\R$,
    then for every $\delta>0$ and real sequence $ \{\gamma_n\}$ that satisfies
    $\sup_{n}d_{\phi}(\l_n,\gamma_n)<\delta$, there exists a constant $C(\delta)>0$ such that
\begin{equation*}
    \sum_n |\langle f, \widetilde{K}_{\l_n}-\widetilde{K}_{\gamma_n}\rangle|^2< C(\delta) \norm{f}_{H(E)}^2,
    \qquad \forall f\in H(E).
\end{equation*}
    Moreover, $C(\delta)$ can be chosen to satisfy $C(\delta) \rightarrow 0$ as $\delta \rightarrow 0$. 
\end{lemma}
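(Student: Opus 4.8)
The plan is to reduce the statement to a one–dimensional estimate for the single function
\[
G(x):=\frac{f(x)}{|E(x)|\sqrt{\phi'(x)}},\qquad x\in\R,
\]
and then to bound $\sum_n|G(\lambda_n)-G(\gamma_n)|^2$ by combining the fundamental theorem of calculus with the Bernstein inequality of Lemma~\ref{CMBsec: bernstein lemma}. From the explicit formula for the reproducing kernel one has $\|K_y\|^2=K_y(y)=|E(y)|^2\phi'(y)/\pi$ for $y\in\R$, so $\langle f,\widetilde K_y\rangle=f(y)/\|K_y\|=\sqrt\pi\,G(y)$ and it suffices to show $\sum_n|G(\lambda_n)-G(\gamma_n)|^2\le c(\delta)\|f\|_{H(E)}^2$ with $c(\delta)\to0$ as $\delta\to0$. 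Since $E$ has no real zeros and $\phi'>0$ is real–analytic on $\R$, the function $G$ is $C^\infty$ on $\R$; writing $g:=f/E$ (analytic near $\R$) and $\Psi:=e^{-i\phi}/\sqrt{\phi'}$, we have $G=g\Psi$ on $\R$ because $E=|E|e^{-i\phi}$ there.

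Next I would estimate the pointwise differences. Let $J_n$ be the closed interval with endpoints $\lambda_n$ and $\gamma_n$. By the fundamental theorem of calculus and Cauchy--Schwarz with weight $\phi'$,
\[
|G(\lambda_n)-G(\gamma_n)|^2\le\Bigl(\int_{J_n}\frac{|G'(t)|^2}{\phi'(t)}\,dt\Bigr)\Bigl(\int_{J_n}\phi'(t)\,dt\Bigr)=d_\phi(\lambda_n,\gamma_n)\int_{J_n}\frac{|G'|^2}{\phi'}<\delta\int_{J_n}\frac{|G'|^2}{\phi'}.
\]
Since $J_n\subset I_\phi(\lambda_n,\delta)$ and $\{\lambda_n\}$ is $\phi$–separated with some constant $\epsilon$, the intervals $\{J_n\}$ have overlap bounded by some $M=M(\delta)<\infty$ (indeed $M=1$ once $\delta<\epsilon/2$, since then $\{\lambda_n\}$ has at most one point in any ball of $d_\phi$–radius $\delta$). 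Summing over $n$ and using Tonelli's theorem gives $\sum_n|G(\lambda_n)-G(\gamma_n)|^2\le M(\delta)\,\delta\int_\R|G'|^2/\phi'$.

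It then remains to prove the global bound $\int_\R|G'|^2/\phi'\lesssim\|f\|_{H(E)}^2$. Differentiating $G=g\Psi$ and using $\Psi'=\Psi\bigl(-i\phi'-\phi''/2\phi'\bigr)$, we get $|\Psi|=(\phi')^{-1/2}$ and $|\Psi'|=\bigl(\phi'+(\phi'')^2/4(\phi')^3\bigr)^{1/2}$. By Lemma~\ref{local doubling equivalences}(c), $|\phi''|\lesssim(\phi')^2$, so $|\Psi'|\lesssim\sqrt{\phi'}$; hence $|G'|\le|g'|(\phi')^{-1/2}+|g|\,|\Psi'|$ yields $|G'|^2/\phi'\lesssim|g'|^2/(\phi')^2+|g|^2$. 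Integrating over $\R$, the first term is $\lesssim\|f\|_{H(E)}^2$ by the Bernstein inequality (Lemma~\ref{CMBsec: bernstein lemma}) and $\int_\R|g|^2=\int_\R|f/E|^2=\|f\|_{H(E)}^2$ by definition of the norm. Collecting the estimates, $\sum_n|\langle f,\widetilde K_{\lambda_n}-\widetilde K_{\gamma_n}\rangle|^2\le C(\delta)\|f\|_{H(E)}^2$ with $C(\delta)=c(E)\,M(\delta)\,\delta$, which tends to $0$ as $\delta\to0$ (and equals $c(E)\delta$ for $\delta<\epsilon/2$).

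The only non‑routine point is the control of $\Psi'$, that is, the contribution of the oscillation of the phase together with the factor $(\phi')^{-1/2}$ coming from the normalisation of the kernels; this is exactly where the local doubling hypothesis is used, through the bound $|\phi''|\lesssim(\phi')^2$. The overlap bookkeeping for the intervals $J_n$ and the invocation of the Bernstein inequality are straightforward, so I expect the phase‑derivative estimate to be the main (mild) obstacle.
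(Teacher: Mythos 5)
Your proposal is correct and takes essentially the same route as the paper's proof: the fundamental theorem of calculus applied to the normalised quotient $f/(|E|\sqrt{\phi'})$, Cauchy--Schwarz against the weight $\phi'$ over the intervals $J_n$ (which are disjoint for small $\delta$ and have boundedly overlapping otherwise), the bound $\abs{\phi''}\lesssim(\phi')^2$ from Lemma \ref{local doubling equivalences}, and the Bernstein inequality of Lemma \ref{CMBsec: bernstein lemma}. The only (harmless) difference is that you keep track of the unimodular factor $\e^{-\im\phi}$ explicitly, whose extra contribution is absorbed into $\int_\R\abs{f/E}^2=\norm{f}_{H(E)}^2$ exactly as the paper's $\phi''$-term is.
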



\begin{proof}
 We begin by estimating
 \begin{align*}
	 \abs{\langle f, \widetilde{K}_{\l_n}-\widetilde{K}_{\gamma_n}\rangle}^2 &= \Abs{\frac{f(\l_n)}{E(\l_n)\sqrt{\phi'(\l_n)}}-\frac{f(\gamma_n)}{E(\gamma_n)\sqrt{\phi'(\gamma_n)}}}^2 \\
	 &=
	 \Abs{ \int_{\gamma_n}^{\lambda_n} \left( \frac{f}{E \sqrt{\phi'}}\right)'(x) \dif x }^2 \\
	 &
	 \leq  2\underbrace{\Abs{ \int_{\gamma_n}^{\lambda_n} \left( \frac{f}{E }\right)'(x) \frac{1}{\sqrt{\phi'(x)}} \dif x }^2}_{(*)}
	 +
	 \underbrace{\Abs{ \int_{\gamma_n}^{\lambda_n} \frac{f(x)}{E(x) }  \frac{\phi''(x)}{\big(\sqrt{\phi'(x)}\big)^3} \dif x }^2}_{(**)}.
\end{align*}
    To estimate $(*)$ we apply the Cauchy-Schwartz inequality to obtain
\begin{equation*}
	(*) \leq \left( \int_{\gamma_n}^{\lambda_n} \phi'(x) \dif x \right) \left( \int_{\gamma_n}^{\lambda_n} \Abs{\frac{(f/E)'(x)}{\phi'(x)}}^2\dif x \right)
	\leq \delta   \int_{\gamma_n}^{\lambda_n} \Abs{\frac{(f/E)'(x)}{\phi'(x)}}^2\dif x.
\end{equation*}
    In the case of $(**)$, the Cauchy-Schwartz inequality followed by Lemma \ref{local doubling equivalences} yields 
\begin{equation*}
	(**) \leq   \left( \int_{\gamma_n}^{\lambda_n} \phi'(x) \dif x \right) \left( \int_{\gamma_n}^{\lambda_n} \Abs{ \frac{f(x)}{E(x)} }^2 \left( \frac{\phi''(x)}{\big( \phi'(x) \big)^2}\right)^2 \dif x \right)
	\lesssim \delta   \int_{\gamma_n}^{\lambda_n} \Abs{ \frac{f(x)}{E(x)} }^2\dif x.
\end{equation*}
    Let $I_n$ denote interval with end points $\lambda_n, \gamma_n$.
For $\delta>0$ small enough, the intervals $I_n$ do not overlap, and we simply sum the previous two estimates up to get 
\begin{equation*}
	 \sum_n \abs{\langle f, \widetilde{K}_{\l_n}-\widetilde{K}_{\gamma_n}\rangle}^2
	 \lesssim   \delta \left(  \int_\R  \Abs{\frac{(f/E)'(x)}{\phi'(x)}}^2\dif x + \int_\R \Abs{\frac{f(x)}{E(x)}}^2 \dif x \right).
\end{equation*}
The desired estimate for $\delta>0$ small, now follows from Lemma \ref{CMBsec: bernstein lemma}.
To see that we get an inequality for any $\delta>0$, with some constant, it is enough to note that in general there is
a uniform bound, only depending on $\delta>0$ and $E$, for how many times the intervals $I_n$ overlap.
\end{proof}


    We end this section on preliminaries with
    two standard results on separation that we need in the proof of theorems
    \ref{theorem_sampling} and \ref{theorem_interpolation}. 
    They do not seem to have appeared previously in our context. For a proof in the Paley-Wiener case
    see \cite[Lemma 3.11, 3.12]{Seip1995}. We provide a proof of the first lemma to illustrate again the 
    use of our techniques.

\begin{lemma}
   Let $E$ be a Hermite-Biehler function with   phase function $\phi$.  If $\phi'(x) \dif x$ is a locally doubling measure on $\R$,
	then every interpolating sequence  $\Gamma$ is $\phi-$separated.
\end{lemma}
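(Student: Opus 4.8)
The plan is to argue by contradiction. Suppose $\Gamma$ is interpolating but not $\phi$-separated; then there exist two sequences $\{\lambda_n\}, \{\gamma_n\}\subset\Gamma$ with $\lambda_n\neq\gamma_n$ and $d_\phi(\lambda_n,\gamma_n)\to 0$. The idea is that interpolation forces the normalised reproducing kernels $\{\widetilde K_\gamma\}_{\gamma\in\Gamma}$ to be a Riesz sequence, hence in particular uniformly minimal: there is $c>0$ such that $\mathrm{dist}(\widetilde K_\gamma, \overline{\mathrm{span}}\{\widetilde K_{\gamma'}: \gamma'\in\Gamma, \gamma'\neq\gamma\})\geq c$ for every $\gamma\in\Gamma$. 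On the other hand I will show $\|\widetilde K_{\lambda_n} - \widetilde K_{\gamma_n}\|\to 0$, and since $\widetilde K_{\gamma_n}$ lies in the span of the other kernels (as $\gamma_n\neq\lambda_n$ are both in $\Gamma$), this would give $\mathrm{dist}(\widetilde K_{\lambda_n},\ldots)\leq \|\widetilde K_{\lambda_n}-\widetilde K_{\gamma_n}\|\to 0$, contradicting uniform minimality once $n$ is large.

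So the crux is the estimate $\|\widetilde K_{\lambda_n} - \widetilde K_{\gamma_n}\|\to 0$ as $d_\phi(\lambda_n,\gamma_n)\to 0$. First I would reduce this to Lemma \ref{SAsec: stability lemma}: taking the supremum over $f\in H(E)$ with $\|f\|\leq 1$, we have
\begin{equation*}
	\|\widetilde K_{\lambda_n} - \widetilde K_{\gamma_n}\|^2 = \sup_{\|f\|\leq 1} |\langle f, \widetilde K_{\lambda_n} - \widetilde K_{\gamma_n}\rangle|^2.
\end{equation*}
Applying Lemma \ref{SAsec: stability lemma} to the (one-term, or rather eventually-$\phi$-separated) pairing and using that $C(\delta)\to 0$ as $\delta\to 0$, one gets that for any $\varepsilon>0$ there is $\delta>0$ with $\|\widetilde K_\lambda - \widetilde K_\gamma\|^2 < \varepsilon$ whenever $d_\phi(\lambda,\gamma)<\delta$. (One should be slightly careful: Lemma \ref{SAsec: stability lemma} is stated for a $\phi$-separated sequence $\{\lambda_n\}$ and a perturbation $\{\gamma_n\}$; here I would just apply it to the single pair $\{\lambda_n\},\{\gamma_n\}$ for each fixed $n$, which is trivially $\phi$-separated, and extract the uniform bound from the statement "$C(\delta)$ can be chosen so that $C(\delta)\to 0$".)

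The main obstacle is making the uniform-minimality step rigorous: I must invoke that a Riesz sequence in a Hilbert space is uniformly minimal (a standard fact, e.g. \cite{young1980}), and confirm that $\widetilde K_{\gamma_n}$ genuinely sits in the closed span of $\{\widetilde K_{\gamma'}:\gamma'\neq\lambda_n\}$ — which holds because $\gamma_n$ itself is an index in $\Gamma$ different from $\lambda_n$. A minor technical point is that if $\Gamma$ already fails to be a set (repeated points) interpolation fails outright, so we may assume the points are distinct; and one should pass to a subsequence so that the two extracted sequences are genuinely disjoint as sequences. Modulo these bookkeeping matters, the argument is short and leans entirely on Lemma \ref{SAsec: stability lemma} together with the elementary structure theory of Riesz sequences.
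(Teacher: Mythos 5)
Your argument is correct in substance, but it takes a genuinely different route from the paper's. The paper argues pointwise: interpolation (plus the open mapping theorem) gives functions $f_\gamma$ of uniformly bounded norm with $f_\gamma(\gamma')=0$ for $\gamma'\neq\gamma$ and $f_\gamma(\gamma)=E(\gamma)\sqrt{\phi'(\gamma)}$; the mean value theorem applied to $f_\gamma/E$ on the interval between $\gamma$ and $\gamma'$, combined with the subharmonic-type estimate of Lemma \ref{CMBsec: subharmonicity lemma} and the discrete Carleson bound of Remark \ref{CMBsec: discrete carleson remark}, yields $1\lesssim \phi'(x)\abs{\gamma-\gamma'}$, which Lemma \ref{technicallemma-2} converts into $d_\phi(\gamma,\gamma')\gtrsim 1$. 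You instead use a soft functional-analytic argument: uniform minimality of $\{\widetilde{K}_\gamma\}$ plus the perturbation estimate of Lemma \ref{SAsec: stability lemma}. This is a legitimate alternative (both routes ultimately rest on the same Bernstein/subharmonicity machinery, since Lemma \ref{SAsec: stability lemma} is proved from it), and it is in the spirit of the paper's own remark that the companion lemma on sampling sequences can be deduced from Lemma \ref{SAsec: stability lemma}; the paper's direct proof buys an explicit quantitative bound, yours buys brevity.

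Two refinements would make it airtight. First, do not route through the claim that interpolation makes $\{\widetilde{K}_\gamma\}$ a Riesz sequence: the Bessel half of that property is, in this paper, equivalent to $\Gamma$ being a finite union of $\phi$-separated sequences (Corollary \ref{separation and sampling}), so invoking it before separation is known is both unnecessary and potentially circular. All you need is uniform minimality, and this follows directly from the definition of interpolation via a closed graph/open mapping argument: the solutions of $f(\gamma')=\delta_{\gamma\gamma'}E(\gamma)\sqrt{\phi'(\gamma)}$ can be taken with uniformly bounded norms, and testing $\widetilde{K}_\gamma-g$, $g$ in the closed span of the other kernels, against such a solution gives the uniform minimality constant; these are exactly the functions the paper's proof starts from. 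Second, when applying Lemma \ref{SAsec: stability lemma} to a single pair $(\lambda,\gamma)$, note from its proof that for $d_\phi(\lambda,\gamma)\le\delta$ the constant is $\lesssim\delta$ with implicit constant depending only on $E$ (through the Bernstein inequality and the bound $\abs{\phi''}\lesssim(\phi')^2$), so the uniformity over pairs that you need is indeed available; with this observation no subsequence extraction is required at all, since for every pair $\gamma\neq\gamma'$ one gets directly $c\le\Norm{\widetilde{K}_\gamma-\widetilde{K}_{\gamma'}}\lesssim d_\phi(\gamma,\gamma')^{1/2}$, hence $d_\phi(\gamma,\gamma')\gtrsim c^2$.
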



\begin{proof}
For $\gamma \in \Gamma$ there exists a function such that for $ {\gamma}' \in \Gamma$ we have 
\begin{equation*}
	f_\gamma(\gamma') = \begin{cases} 0 & \gamma' \neq \gamma, \\ E(\gamma) \sqrt{\phi'(\gamma)}  & \gamma' = \gamma, \end{cases}
\end{equation*}
 with $\norm{f_\gamma}_{H(E)} \lesssim 1$.
For $\gamma \neq \gamma'$, this implies 
\begin{equation*}
	 \sqrt{\phi'(\gamma)}  = \Abs{\frac{f_\gamma(\gamma)}{E(\gamma)} - \frac{f_\gamma(\gamma')}{E(\gamma')}}
	 = \abs{\gamma-\gamma'} \Abs{\left(\frac{f_\gamma}{E}\right)'(x)},
\end{equation*}
    for some $x$ in the interval with end points $\gamma, \gamma'$. By Lemma \ref{CMBsec: subharmonicity lemma}, 
    this is less than
\begin{equation*}
	 \abs{\gamma-\gamma'}  \phi'(x)^{3/2} 
    \bigg( \underbrace{ \phi'(x)\int_{D_\phi (x,1)} \Abs{ f_\gamma(\xi) }^2 \e^{-2\Phi(\xi)} \dif m (\xi)}_{(*)} \bigg)^{1/2}.
\end{equation*}
    By Remark \ref{CMBsec: discrete carleson remark}, the expression $(*)$ is bounded by some constant only depending on the norm of $f_\gamma$.  Hence,
    $1 \lesssim \phi'(x) \abs{\gamma- \gamma'}$.
    But, by Lemma \ref{technicallemma-2}, this implies that $d_\phi(\gamma,\gamma') \gtrsim 1$.
\end{proof}


\begin{lemma}
Let $E$ be a Hermite-Biehler function with   phase function $\phi$.  If $\phi'(x) \dif x$ is a locally doubling measure on $\R$, then
for every sampling sequence $\L$, there
exists a
$\phi$-separated sequence $\L'\subset \L$ which is also a
sampling sequence.
\end{lemma}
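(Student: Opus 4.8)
The plan is to pass to a maximal $\phi$-separated subsequence $\Lambda'\subset\Lambda$ and to verify that, once the separation scale is chosen small enough, $\Lambda'$ inherits the lower sampling inequality from $\Lambda$; the upper (Bessel) inequality for $\Lambda'$ is then free, since $\Lambda'\subset\Lambda$ and $\Lambda$ is Bessel. First I would record that, being sampling, $\Lambda$ is Bessel, so by Corollary~\ref{separation and sampling} it splits as $\Lambda=\Lambda_1\cup\cdots\cup\Lambda_N$ with all the $\Lambda_j$ $\phi$-separated with one common separation constant $\eps_0>0$. I would then fix $\delta\in(0,\eps_0/2)$ --- to be pinned down at the end --- and build greedily a subsequence $\Lambda'\subset\Lambda$ that is $\delta$-separated with respect to $d_\phi$ and maximal as such (adding a point to $\Lambda'$ whenever it is at $d_\phi$-distance $\ge\delta$ from those already chosen); then every $\gamma\in\Lambda$ lies within $d_\phi$-distance $\delta$ of some point of $\Lambda'$, and I would fix such an assignment $\gamma\mapsto\pi(\gamma)\in\Lambda'$, taking $\pi(\gamma)=\gamma$ when $\gamma\in\Lambda'$.

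The crux is a combinatorial bound: every fibre $\pi^{-1}(\lambda)$ has at most $N$ points. Indeed, any two points of $\pi^{-1}(\lambda)$ are at mutual $d_\phi$-distance at most $2\delta<\eps_0$, while two distinct points of a single $\Lambda_j$ lie at $d_\phi$-distance more than $\eps_0$, so $\pi^{-1}(\lambda)$ meets each $\Lambda_j$ at most once. This lets me split $\Lambda=\Gamma^{(1)}\cup\cdots\cup\Gamma^{(N)}$ so that, for each $k$, $\pi$ restricts to a bijection of $\Gamma^{(k)}$ onto a subset of $\Lambda'$. Enumerating $\Gamma^{(k)}=\{\gamma^{(k)}_n\}$ and setting $\lambda^{(k)}_n:=\pi(\gamma^{(k)}_n)$, the sequence $\{\lambda^{(k)}_n\}$ is $\phi$-separated and $\sup_n d_\phi(\gamma^{(k)}_n,\lambda^{(k)}_n)\le\delta$, so Lemma~\ref{SAsec: stability lemma} gives
\begin{equation*}
	\sum_n \big|\langle f,\widetilde K_{\lambda^{(k)}_n}-\widetilde K_{\gamma^{(k)}_n}\rangle\big|^2 \;\le\; C(\delta)\,\|f\|_{H(E)}^2 ,\qquad f\in H(E),
\end{equation*}
with $C(\delta)\to0$ as $\delta\to0$. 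Summing over $k=1,\dots,N$, using $|\langle f,\widetilde K_\gamma\rangle|^2\le 2|\langle f,\widetilde K_{\pi(\gamma)}\rangle|^2+2|\langle f,\widetilde K_\gamma-\widetilde K_{\pi(\gamma)}\rangle|^2$ together with $\sum_{\gamma\in\Lambda}|\langle f,\widetilde K_{\pi(\gamma)}\rangle|^2\le N\sum_{\lambda\in\Lambda'}|\langle f,\widetilde K_\lambda\rangle|^2$, and writing $A$ for the lower sampling constant of $\Lambda$, one is led to
\begin{equation*}
	A\,\|f\|_{H(E)}^2 \;\le\; \sum_{\gamma\in\Lambda}\frac{|f(\gamma)|^2}{\|K_\gamma\|^2} \;\le\; 2N\sum_{\lambda\in\Lambda'}\frac{|f(\lambda)|^2}{\|K_\lambda\|^2} + 2N\,C(\delta)\,\|f\|_{H(E)}^2 .
\end{equation*}
Finally I would choose $\delta$ small enough that $2NC(\delta)\le A/2$, which forces $\sum_{\lambda\in\Lambda'}|f(\lambda)|^2/\|K_\lambda\|^2\ge (A/4N)\|f\|_{H(E)}^2$ for all $f\in H(E)$; hence $\Lambda'$ is a $\phi$-separated sampling sequence.

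The only real obstacle is keeping the multiplicity $N$ of the map $\pi$ bounded as $\delta\to0$. This is exactly what the uniform $\phi$-separation of the Bessel decomposition $\Lambda=\bigcup_j\Lambda_j$ buys --- crucially, $N$ and $\eps_0$ are fixed \emph{before} $\delta$ --- and it is what permits the error term $2NC(\delta)\|f\|_{H(E)}^2$ coming from Lemma~\ref{SAsec: stability lemma} to be absorbed into the lower sampling estimate.
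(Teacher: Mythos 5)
Your proof is correct and is essentially the deduction the paper has in mind: the paper gives no details, saying only that the result can be deduced from Lemma \ref{SAsec: stability lemma} (pointing to the Bergman-space analogue in Duren--Schuster), and your clustering of $\Lambda$ onto a maximal $\delta$-separated subsequence, with the fibre multiplicity controlled by the Bessel decomposition from Corollary \ref{separation and sampling}, is exactly that deduction. The one point worth making explicit is that your comparison sequences $\{\lambda^{(k)}_n\}$ change with $\delta$ (their separation constant is itself $\delta$), so you need the quantitative content of the stability lemma's proof --- namely $C(\delta)\lesssim \delta$ up to an overlap multiplicity and constants depending only on $E$, the overlap being bounded here because the perturbation does not exceed the separation --- rather than the literal statement, in which $C(\delta)$ is attached to one fixed sequence $\{\lambda_n\}$.
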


    This result can be deduced from Lemma \ref{SAsec: stability lemma}.
    For a detailed proof in the Bergman case see \cite[p. 201]{duren_schuster2004}.

%
%

\section{Basic constructions: Multiplier and peak functions}                                   \label{sec_Basic_constructions}

In this section,    we   consider a larger class of spaces than the de Branges spaces. In view of this,   let  $\Psi$ denote a real valued subharmonic function  with Laplacian supported
on the real line, and define the space
\[
\mathcal{F}_{\Psi}^{\infty}=\{f\in H(\C):
\|f\|_{\mathcal{F}_{\Psi}^{\infty}}=\sup_{z\in \C}|f(z)|e^{-\Psi
(z)}<\infty\}.
\]
Note that if $E$ is a Hermite-Biehler function and, as above, we define
\begin{equation*}
	\Phi(z) := \begin{cases} \log \abs{E(z)} & \text{if} \; \Im z > 0, \\ \log \abs{E^*(z)} & \text{if} \; \Im z < 0. \end{cases}
\end{equation*}
 then $f \in \F_{\Phi}^{\infty}$ 
 if and only if $f/E$ and $f^\ast/E$ are in $H^\infty(\C_+)$, with the norm computed on $\R$.

Let $\mu$ denote the measure on $\R$ given by $\Delta \Psi$. We make the additional assumptions that this measure is doubling, and that $\mu = \psi'(t) \dif t$ for some suitable function $\psi \in C^\infty(\R)$ satisfying  (b) of Lemma \ref{local doubling equivalences}, i.e., there exist constants such that the following local doubling property holds:
\begin{equation} \label{sec 3 phi prime}
	\psi'(x) \simeq \psi'(y) \; \; \text{whenever} \; \; \abs{\psi(x) - \psi(y)} \leq 1.
\end{equation}

 Recall that by $d_\psi$, $I_\psi$ and $D_\psi$ we denote the metric, open intervals and open balls induced by the measure $\psi'(t)\dif t$, respectively, and that by $Z(f)$ we denote the zero set of the entire function $f$.


\subsection{Multiplier construction}


 We first establish the existence of the  so-called multiplier function. 	We follow the proof of Theorem 41 in \cite{marco_massaneda_ortega-cerda2003} closely.

\begin{theorem}                                     \label{multiplier theorem}
    Let $\Psi, \psi$ and $\mu$ be as above.	
     If  the sequence $\Lambda  = (\lambda_j)$ of real numbers is     separated in  the metric $d_\psi$ and satisfies
\begin{equation}                            \label{density condition}
		D^+_\psi(\Lambda) := \limsup_{r \rightarrow \infty} \sup_{\mu(I) = r} \frac{\abs{I \cap \Lambda}}{r} < \frac{1}{2\pi},
\end{equation}
	then we can construct a function
    $f \in \mathcal{F}^\infty_{\Psi}$ such that $\Lambda \subset Z(f)$, and, for some number $\eta>0$, it holds that:
\begin{itemize}
		\item[$(a)$] The set $\Lambda$ is separated from the other zeroes of $f$   in the sense that
            $D_\psi(\lambda, \eta) \cap Z(f) = \{ \lambda \}$ for all $\lambda \in \Lambda$.
 		\item[$(b)$] There exist constants such that $\abs{f(x) \e^{-\Psi(x)}} \simeq d_\psi(x,\lambda)$ 
             for all $\lambda \in \Lambda$
             and $x \in I_\psi(\lambda, \eta).$
\end{itemize}
\end{theorem}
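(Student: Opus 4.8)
The plan is to follow the Fock-space construction of Marco–Massaneda–Ortega-Cerdà (Theorem 41 in their paper), adapted to the present setting where $\Delta\Psi = \psi'(t)\,dt$ is supported on $\mathbb R$. The basic object is a candidate function of the form $f(z) = \prod_{\lambda\in\Lambda}\bigl(1 - z/\lambda\bigr)$, suitably regularized (e.g.\ grouped with the convergence factors dictated by the genus), whose logarithmic modulus we want to compare to $\Psi(z)$. Write $u(z) = \log|f(z)| = \sum_{\lambda\in\Lambda}\log|1-z/\lambda|$; this is subharmonic with $\Delta u = 2\pi\sum_{\lambda\in\Lambda}\delta_\lambda$. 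The heart of the matter is to show that the difference $u - \Psi$ is bounded above on $\mathbb C$, i.e.\ that $|f(z)|e^{-\Psi(z)}$ is bounded, while being comparable to $d_\psi(x,\lambda)$ near each $\lambda$. Since both $\Delta u$ and $\Delta\Psi = 2\,d\mu$ are measures on $\mathbb R$, the comparison reduces to a one-dimensional counting statement: the density hypothesis \eqref{density condition}, $D^+_\psi(\Lambda) < 1/(2\pi)$, says precisely that $\pi\,|\Lambda\cap I| < \tfrac12\mu(I) = \tfrac12\int_I\psi'$ for all large intervals $I$, i.e.\ the mass $\Delta u$ places on $I$ is strictly dominated (after the factor $2\pi$ versus $2$) by the mass $\Delta\Psi$ places on $I$.

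First I would set up the comparison potential. Fix a reference point, say $0\in\Lambda$ (translating if necessary), and write $v = u - \Psi$. One shows $\Delta v$ is a signed measure on $\mathbb R$ whose positive and negative parts have, on every interval, comparable total masses up to the strict gap provided by the density inequality. The standard trick (as in Beurling's lemma, and in Section 4 of Marco–Massaneda–Ortega-Cerdà) is to estimate $v(z)$ by splitting the logarithmic potential into a local part — over $I_\psi(\Re z, R)$ for a large fixed $R$ — and a tail part. For the local part one uses that both $\Lambda$ and $\mu$ are, in the $d_\psi$ metric, uniformly distributed at unit scale (the $\phi$-separation of $\Lambda$ together with Lemma \ref{technicallemma-2} and the local doubling property \eqref{sec 3 phi prime}), so the local contributions of $\log|1-z/\lambda|$ and of the Poisson-type potential of $\mu$ both behave like the contribution of a "model" uniform configuration, and the difference is $O(1)$. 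For the tail part one uses Lemma \ref{lemma gamma doubling} (the doubling-measure power estimate) to control how $\mu(I)$ and $|I|$ compare at large scales, and the density gap to absorb the growth; this is where $D^+_\psi(\Lambda) < 1/(2\pi)$ (strict, and with room) is essential, not merely $\le$.

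Next, from the potential estimate $v(z)\lesssim 1$ I get $f\in\mathcal F^\infty_\Psi$ and $\Lambda\subset Z(f)$ by construction. For part $(b)$, near a fixed $\lambda\in\Lambda$ I isolate the single factor $(1-z/\lambda)$: on $I_\psi(\lambda,\eta)$ one has $|1 - x/\lambda| = |x-\lambda|/|\lambda| \simeq \psi'(\lambda)\,|x-\lambda| \simeq d_\psi(x,\lambda)$ by Lemma \ref{technicallemma-2}(a) and \eqref{sec 3 phi prime}, while the remaining product $\prod_{\lambda'\ne\lambda}(1-x/\lambda')$ times $e^{-\Psi(x)}$ is shown to be $\simeq 1$ on that interval — this is the same potential estimate applied to $\Lambda\setminus\{\lambda\}$, which still satisfies the density bound, and one checks the lower bound by the same uniform-distribution/minimum-principle argument (a subharmonic function bounded above, with controlled zero set, cannot be too small except near its zeros). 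For part $(a)$, the separation of $\Lambda$ from the other zeros of $f$ with some $\eta>0$ follows because if some other zero $\lambda'$ of $f$ (necessarily not in $\Lambda$, or a repeated point) lay within $d_\psi$-distance $\eta$ of $\lambda$, the lower bound $|f(x)e^{-\Psi(x)}|\simeq d_\psi(x,\lambda)$ on $I_\psi(\lambda,\eta)$ would be contradicted; so one chooses $\eta$ small enough (depending only on the separation and doubling constants) for $(b)$ to force $(a)$.

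The main obstacle I expect is the tail estimate in the potential comparison: controlling $\sum_{\lambda \text{ far}}\log|1-z/\lambda| - \Psi_{\text{far}}(z)$ uniformly in $z\in\mathbb C$, where one must pass between the Euclidean scale $|x-\lambda|$ and the $\psi$-metric scale using the doubling (power) inequalities of Lemma \ref{lemma gamma doubling}, and sum a series of differences that individually are not small but telescope against the density gap. Getting the bookkeeping right — in particular verifying that the strict inequality $D^+_\psi(\Lambda)<1/(2\pi)$ (with a genuine $\epsilon$ of slack) is exactly what makes the resulting geometric-type series converge, and that the comparison survives the scale distortion $|I|\simeq(\mu(I))^{1/\gamma}$ — is the delicate part, and is where we lean most heavily on the corresponding argument in \cite{marco_massaneda_ortega-cerda2003}, checking that each use of the planar doubling of $\Delta\Phi$ there can be replaced by the real-line doubling of $\psi'\,dt$ here.
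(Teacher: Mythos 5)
Your plan takes the candidate $f(z)=\prod_{\lambda\in\Lambda}(1-z/\lambda)$ (with convergence factors) and hopes to bound $\log|f|-\Psi$ by playing the density gap against the tail of the potential. This is not the paper's route, and it has a genuine gap at its core: when $Z(f)=\Lambda$, the Riesz mass $2\pi\sum_\lambda\delta_\lambda$ is, by hypothesis, strictly \emph{smaller} than $\Delta\Psi=\psi'(t)\,dt$ at large scales, and a mass deficit cannot be cancelled by any estimate — it produces unbounded drift of $\log|f|-\Psi$ for irregular $\Lambda$. A concrete failure: in the Paley--Wiener normalization ($\psi(x)=\pi x$), take the one-sided separated sequence $\Lambda=2\N$, whose upper density is $1/2<1$. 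The canonical product then behaves like $1/\Gamma$, so $\log|f(-x)|\sim \tfrac{x}{2}\log x$ as $x\to+\infty$, while $\Psi(-x)=0$ on $\R$; no polynomial regularization $e^{P(z)}$ repairs this, so $f\notin\F^\infty_\Psi$ and the two-sided bound in $(b)$ with constants uniform in $\lambda$ is also lost. Relatedly, your reading of the strict inequality $D^+_\psi(\Lambda)<1/2\pi$ is off: it is not what makes a tail series converge; it is what guarantees that on blocks $I_k$ with $\mu(I_k)=2\pi M$ the signed measure $\mu/2\pi-\sum_\lambda\delta_\lambda$ has mass between $\epsilon M$ and $M$, i.e.\ there is \emph{room to add zeros}.

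That is precisely the missing idea. The theorem only asks $\Lambda\subset Z(f)$, and the paper's construction exploits this: it partitions $\R$ into intervals $I_k$ of $\mu$-mass $2\pi M$, normalizes $\widetilde\mu(I_k)=n^2$ by adding finitely many separated points to $\Lambda$, and then uses the moment lemma (Lemma \ref{ortega lemma}, based on power-sum symmetric polynomials) to place auxiliary point masses $\Sigma_k$ near each block so that $\nu_k=\widetilde\mu_k-\sum_{\sigma\in\Sigma_k}\delta_\sigma$ has $n$ vanishing moments, afterwards pushing any $\sigma$ that lands too close to some $\lambda$ out to a small circle of $n$ points (this preserves the moments and yields the separation in $(a)$). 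The potential $w(z)=\int\log|z-\zeta|\,d\nu(\zeta)$ then converges and is bounded because the Taylor remainder of $\log(z-\zeta)$ against a measure with $n$ vanishing moments decays like $\rho_k^n/|z-x_k|^n$, summable since $n\gamma>1$ with $\gamma$ from Lemma \ref{lemma gamma doubling}; writing $f=e^{-h}g$ with $g$ vanishing on $\Lambda\cup\Sigma$ gives $\log|f|=\Psi-w$, and the two estimates \eqref{first estimate}--\eqref{second estimate} yield $f\in\F^\infty_\Psi$ and $(b)$. In short: the cancellation comes from added zeros and vanishing moments, not from the density gap, and without the auxiliary set $\Sigma$ your tail estimate cannot be closed.
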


	The idea in the first part of the proof, is to use \eqref{density condition} to associate
    with $\mu = \psi'(t) \dif t$ a measure on $\C$ that can be split up into pieces with good
    cancelation properties.
    To this end, we need the following lemma.


\begin{lemma}                           \label{ortega lemma}
	    Let $\tau$ be a complex measure supported in an interval
        $I \subset \R$ with $|\tau(I)|\ge 1$ and
        $\int_I |x|^j d|\tau|(x) < \infty$ for $j=1,\dots, n$.
        Then there exists a set of points $\{ \xi_1, \ldots, \xi_n \}\in \C$ such that
\begin{equation*}
	 		\frac{1}{\tau(I)}\int  x^j \dif \tau(x) = \frac{1}{n} \sum_{k=1}^n \xi_k^j,
\end{equation*}
		for $j \in \{1, \ldots, n\}$. Moreover, if $I=I(c,r)$,
        the points
        $\{ \xi_1, \ldots, \xi_n \}$
        are in a ball $D(c,C r)$ for some
        constant $C$ depending only on $n$ and $|\tau|(I).$
\end{lemma}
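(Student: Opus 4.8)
The statement asks: given a complex measure $\tau$ supported on an interval $I \subset \R$ with $|\tau(I)| \ge 1$ and finite moments up to order $n$, find $n$ points $\xi_1, \ldots, \xi_n \in \C$ whose power sums (divided by $n$) match the normalized moments $\frac{1}{\tau(I)}\int x^j\,d\tau(x)$ for $j = 1, \ldots, n$, with the points localized in a dilate of $I$.

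The plan is to pass from power sums to elementary symmetric functions. Set $m_j := \frac{1}{\tau(I)}\int_I x^j\,d\tau(x)$ for $j = 1, \ldots, n$ (these are well-defined finite complex numbers by the moment hypothesis and $|\tau(I)| \ge 1$). We want points $\xi_1, \ldots, \xi_n$ with $p_j(\xi) := \sum_{k=1}^n \xi_k^j = n\,m_j$ for $j = 1, \ldots, n$. By the Newton–Girard identities, the power sums $p_1, \ldots, p_n$ determine, and are determined by, the elementary symmetric polynomials $e_1, \ldots, e_n$ of the $\xi_k$ via a triangular polynomial system over $\Q$ (no division needed in the direction $p \mapsto e$ beyond dividing by integers $\le n$). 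So define $e_1, \ldots, e_n$ recursively from the prescribed values $n m_1, \ldots, n m_n$, and then let $\xi_1, \ldots, \xi_n$ be the roots (with multiplicity) of the monic polynomial $Q(w) = w^n - e_1 w^{n-1} + e_2 w^{n-2} - \cdots + (-1)^n e_n$. By construction these $\xi_k$ have the required power sums, hence the required normalized moments.

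It remains to prove the localization: if $I = I(c,r)$ then all $\xi_k \in D(c, Cr)$ with $C = C(n, |\tau|(I))$. First reduce to $c = 0$ and $r = 1$ by the affine change of variable $x \mapsto (x-c)/r$, which scales moments in the expected way and does not change $|\tau|(I)$; it suffices to bound $|\xi_k|$ by a constant depending on $n$ and $|\tau|(I)$ when $\mathrm{supp}\,\tau \subset [-1,1]$. In that normalization $|m_j| = \frac{1}{|\tau(I)|}|\int x^j d\tau| \le \frac{|\tau|(I)}{|\tau(I)|} \le |\tau|(I) =: M$ (using $|\tau(I)| \ge 1$ and $|x| \le 1$ on the support). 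The Newton–Girard recursion then gives $|e_j| \le C_j(n, M)$ for each $j$ — an explicit bound obtained by induction, since $j\,e_j = \sum_{i=1}^{j}(-1)^{i-1} e_{j-i} p_i$ and $|p_i| = n|m_i| \le nM$. Finally, a root $w$ of $Q$ satisfies $|w|^n \le \sum_{j=1}^n |e_j| |w|^{n-j}$, and a standard Cauchy-type bound on the roots of a monic polynomial in terms of its coefficients gives $|w| \le 1 + \max_j |e_j| \le C(n,M)$. Undoing the rescaling yields $\xi_k \in D(c, Cr)$.

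The only genuinely delicate point is bookkeeping the constants through the Newton–Girard recursion and confirming that the dependence is only on $n$ and $|\tau|(I)$ (and not on finer features of $\tau$ or on $r, c$); this is routine but must be done carefully because the constant $C$ feeds into the later multiplier construction. Everything else — existence of the roots (fundamental theorem of algebra), the affine reduction, the moment bound $|m_j| \le |\tau|(I)$ — is immediate. I would present the argument in exactly the order above: define $m_j$; invoke Newton–Girard to produce $e_j$ and then $Q$ and its roots $\xi_k$; then do the affine normalization and the inductive coefficient bound followed by the Cauchy root bound.
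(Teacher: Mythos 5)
Your proposal is correct and takes essentially the same route as the paper: prescribe the power sums $n m_j$, pass to the elementary symmetric functions, take the $\xi_k$ to be the roots of the associated monic polynomial, and obtain the localization from a coefficient bound on the roots after an affine reduction to the unit interval. The only difference is cosmetic: you realize the passage from power sums to elementary symmetric functions through the explicit Newton--Girard recursion (which also drives your bound on the $e_j$), while the paper invokes the abstract fact that the power sums generate, and are algebraically independent in, the ring of symmetric polynomials over $\Q$.
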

\begin{proof}
    We sketch a proof due to Ortega-Cerd\`a \cite{ortega-cerdaPRIVATE}. Let
\begin{equation*}
	p_j(z_1,\dots , z_n)=z_1^j+\dots + z_n^j, \qquad j=1,\dots , n.
\end{equation*}
%
%
    Our objective is  to solve the system of equations
\begin{equation*}
	p_j(z_1,\dots, z_n)=\frac{n}{\tau(I)}\int  x^j \dif \tau(x)=c_j, \qquad j=1,\dots , n.
\end{equation*}
    We first suppose that $I=(-1,1).$ Observe that $|c_j|\le n |\tau|(I).$
    The polynomials $p_1,\dots , p_n$ are called the power sum symmetric polynomials, and it is well-known that they
    generate the ring of symmetric polynomials with rational coefficients in $n$ variables. In particular, the coefficients
    of the polynomial
    \begin{equation*}
	\prod_{i=1}^n (t  - z_i) = t^n - e_{1}(z_1, \ldots, z_n) t^{n-1} + \cdots + (-1)^n e_{n}(z_1, \ldots, z_n),
    \end{equation*}
    which are   the elementary symmetric polynomials, are in the ring of symmetric polynomials. In view of this,
%
%
    there exist polynomials $Q_j(z_1, \ldots, z_n)$ such that 
	\begin{equation*}
	     e_j =Q_j(p_1,\dots , p_n), \quad j=1,\dots , n.
	\end{equation*}
    As the elementary symmetric polynomials $e_j$ also generate   the ring of symmetric polynomials in $n$ variables, there exist
    polynomials $R_j(z_1,\dots, z_n)$ 
    such that
	\begin{equation*}
	     p_j =R_j(e_1, \dots, e_n), \quad j=1,\dots , n.
	\end{equation*}
    Since the power sum symmetric polynomials are algebraically independent over $\Q$ \cite{macdonald1995}, it now follows   that
	\begin{equation*}
	     R_j(Q_1(z_1,\dots , z_n),\dots , Q_n(z_1,\dots , z_n))=z_j,  \quad j=1,\dots , n.
	\end{equation*}
    Let $\{ \xi_1, \ldots, \xi_n\}$ be the roots of the   polynomial
	\begin{equation*}
	     t^n-Q_1(c_1,\dots ,c_n)t^{n-1}+ \dots +(-1)^n Q_n(c_1,\dots ,c_n).
	\end{equation*}
    Clearly,
	\begin{equation*}
	    	e_j(\xi_1, \ldots, \xi_n)=Q_j(c_1,\dots ,c_n),\;\;\;j=1,\dots , n,
	\end{equation*}
    and therefore
\begin{align*}
    p_j(\xi_1, \ldots, \xi_n) & =R_j(e_1(\xi_1, \ldots, \xi_n),\dots , e_n(\xi_1, \ldots, \xi_n))
    \\
    &
    =
    R_j(Q_1(c_1,\dots ,c_n),\dots , Q_n(c_1,\dots ,c_n))=c_j.
\end{align*}
    As it holds that
	\begin{equation*}
		|\xi_j|\le \max_{j=1,\dots, n} \{ |Q_j(c_1,\dots ,c_n)|\}+1, \quad j=1,\dots, n,
	\end{equation*}
    the points $\xi_j$ are in $D(0,C)$ for some
        constant $C>0$ depending only on $n$ and $|\tau|(I)$.

    In the general case, the interval $I(c,r) = (c-r,c+r)$ can be sent to $(-1,1)$, by an affine map. If we apply  the above procedure to the measure $\tilde{\tau}$ on $(-1,1)$ induced by $\tau$, we obtain a set of points $\tilde{\xi}_k$. With this, it is not hard to see that the points
	\begin{equation*}
		\xi_k=r \tilde{\xi}_k +c,\;\;k=1,\dots, n,
	\end{equation*}
    have the desired properties.
    Indeed,
\begin{align*}
    \frac{1}{\tau(I)} & \int_I x^j d\tau(x)=\frac{1}{\tau(I)} \int_{-1}^1 (r x +c)^j d\tilde{\tau}(x)
    =
    \sum_{\ell=0}^j\binom{j}{\ell} \frac{c^{j-\ell}r^j}{\tau(I)} \int_{-1}^1 x^j d\tilde{\tau}(x)
    \\
    &
    =
    \sum_{\ell=0}^j\binom{j}{\ell} c^{j-\ell}r^j \frac{1}{n}\sum_{k=1}^n \tilde{\xi}_k^j=
    \frac{1}{n}\sum_{k=1}^n \xi_k^j.
\end{align*}
\end{proof}


\begin{proof}[Proof of Theorem \ref{multiplier theorem}]

    Given $x\in \R$, we set $\rho_x=1/\psi'(x)$. Observe that in this notation, $\mu(D (x,\rho_x))\simeq 1.$
	Define the measure
\begin{equation*}
		\widetilde{\mu} := \frac{\mu}{2\pi} - \sum_{\lambda \in \Lambda}  \delta_\lambda.
\end{equation*}
	Fix $n \in \N$ so large that $n \gamma > 1$, where $\gamma$ is
    the exponent appearing in Lemma \ref{lemma gamma doubling} for $\mu$.

	By condition (\ref{density condition}),  given any  $\epsilon >0$,    if we choose $M> 0$ large enough, then it holds
    for any interval $I \subset \R$, with $\mu(I) =2 \pi M$, that
\begin{equation*}
		\epsilon M \leq \widetilde{\mu}(I)  \leq M.
\end{equation*}
	Choose   $M$  so large that both $M \epsilon \geq n^2$ and the above holds,
	and let $\{ I_k \}$ be a partition of $\R$ into   intervals with  $\mu(I_k) =2 \pi M$.
	Note that if $M$ is an integer, then so is $\widetilde{\mu}(I_k)$.
	Hence, without loss of generality, we can add points to the set $\Lambda$, while keeping the separation
    (since no more than $M$ points have to be added to any single interval $I_k$), so that
\begin{equation*}
		\widetilde{\mu}(I_k) = n^2, \qquad \forall k \in \Z.
\end{equation*}

	Let $S_k$ be the square, symmetric with respect to $\R$,   such that $S_k \cap \R = I_k$,
    and consider the measure  $\widetilde{\mu}$ as a singularly supported measure on $\C$. Let
	$\widetilde{\mu}_k$ denote its restriction to $S_k$.  	
	
	We apply Lemma \ref{ortega lemma}  to each of the measures $\widetilde{\mu}_k$ to find
    point sets $\Xi_k = \{ \xi_{k,1}, \ldots, \xi_{k,n} \} \subset C S_k$ for which the measures
\begin{equation*}
		\widetilde{\mu}_k - n \sum_{\xi \in   \Xi_k} \delta_\xi
\end{equation*}
	each have  $n$ vanishing moments. Here $C S_k$  is the square co-centric with $S_k$ and
    with side-lengths scaled up by the factor $C.$
	
	The next step of the proof is to modify  each $\Xi_k$ to obtain sets $\Sigma_k$ such that for each $k \in \Z$, the measure
\begin{equation*}
		\nu_k =  \widetilde{\mu}_k -   \sum_{\sigma \in   \Sigma_k} \delta_\sigma
\end{equation*}
	also has $n$ vanishing moments, while keeping the set $\Lambda$    separated from $\Sigma := \cup \Sigma_k$ in the sense that there exists an $\eta>0$ such that for all
	$\lambda \in \Lambda$, the discs $D_\psi(\lambda, \eta)$ have empty intersection with  $\Sigma$. In this notation,  the set $\Sigma_k$ may have some points of multiplicity $n$.

    Because of the separation of $\Lambda,$   shrinking $\eta>0$ if necessary, we can suppose that
    the euclidean discs $D(\lambda, \eta\rho_\lambda)$ are pairwise disjoint for
    $\lambda \in \Lambda.$

	Now, for each $\xi \in   \Xi_k$ belonging to   $D(\lambda, \eta \rho_\lambda /5 )$ for some $\lambda \in \Lambda$, set
\begin{equation*}
		\Sigma_\xi = \{   \xi +  (3\eta \rho_\lambda /5 )  \e^{\frac{2\pi \im l}{n}} :  0 \leq l <n \}.
\end{equation*}
	Let $\widetilde{\Sigma}_k$ be the union of all such sets for those $\xi \in \Xi_k$. With this, define
\begin{equation*}
		\Sigma_k  =  \widetilde{\Sigma}_k \bigcup \{ \xi \in \Xi_k : \xi \notin D(\lambda, \eta \rho_\lambda), \; \forall \lambda \in \Lambda \}.
\end{equation*}
	As mentioned above, in this definition we allow some points to have multiplicity $n$.
	By construction, the set $\Sigma$ is clearly separated from $\Lambda$ in the sense
    given above (see Figure 1). To see that the measure $\nu_k$ also has $n$ vanishing moments,  it suffices
    to observe that for any $z, \tau \in \C$ and    $0 \leq j < n$ we have
\begin{equation*}
		 \sum_{l=0}^{n-1} \big(z + \tau \e^{\frac{2\pi \im l}{n}}\big) ^{j} = n z^j.
\end{equation*}
	Note that it may happen that the points in $\Sigma_k$ are now contained in some square
    larger than $C S_k$. However, since $\psi$ satisfies the local doubling property \eqref{sec 3 phi prime}, it is easily seen
    that this may be remedied by increasing $C$ slightly, independent of $k$. 

\begin{figure}
	 	\includegraphics{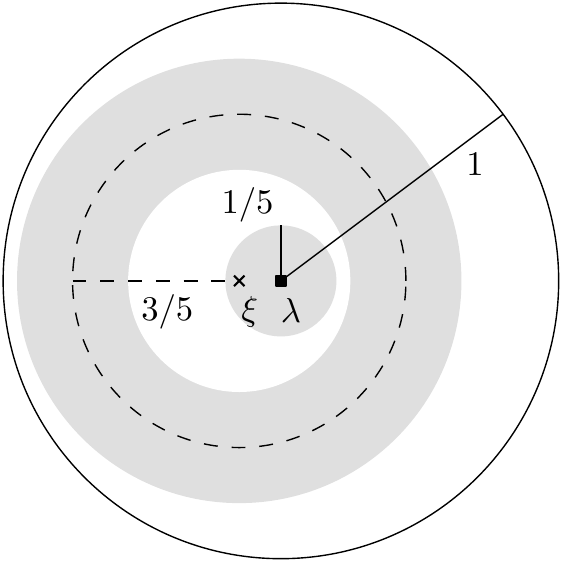}
		\caption{We illustrate how   $\Sigma_k$ is obtained by replacing  $\xi$ by   points placed on
		a circle with center $\xi$ and radius depending on the proximity to the closest $\lambda$. The
		dotted circle indicates the placement of the $n$ new points, while the shaded area indicates the new separation between
		the new points and $\lambda$.
		(Note that we have normalized the distances.)}
\end{figure}

 	To define the desired multiplier function $f$, let $\nu = \sum \nu_k$, and set
\begin{equation*}
		w(z) := \int_{\C} \log \abs{z-\zeta} \dif \nu (\zeta).
\end{equation*}
	In particular,   $\Delta w = 2\pi \nu$.
	Assume for a moment that $w(z)$ is finite for all $z \in \C$, and let $g$ denote any entire function
	with zeroes $\Sigma \cup \Lambda$ with appropriate multiplicities. Then $\Delta w = \Delta(\Psi -  \log \abs{g})$. I.e.,
\begin{equation*}
		\Delta(w - \Psi + \log\abs{g} ) = 0.
\end{equation*}
	 Hence, by Weyl's lemma, the function $w- \Psi + \log \abs{g}$ is harmonic on $\C$. Therefore,
	there exists an entire function $h$ of which this is the real part, so $w = \Re h - \log \abs{g} + \Psi$.
	We set
\begin{equation*}
		f :=  \e^{-h}g.
\end{equation*}
	
    The final step of the proof is to obtain the required estimates for $f$.	
	Indeed, to show that $f \in \mathcal{F}^\infty_\Psi$ and that $(b)$ holds, it suffices to find   positive constants $B, C, D < \infty$ for which
\begin{equation}\label{first estimate}
		  -  w(z)  \leq B, \qquad  \forall z \in \C,
\end{equation}
	and for small enough $\eta >0$ and all $\lambda \in \Lambda$, 
\begin{equation} \label{second estimate}
		- C \leq w(z) - \log \frac{R \rho_\lambda}{\abs{z- \lambda}}  \leq D, \qquad \forall z \in D(\lambda, \eta\rho_\lambda).
\end{equation}

	For the estimates below, we   let $x_{k}$ denote the center of $I_{k}$ and set $\rho_{x_k}  = \rho_k$. By the local doubling property \eqref{sec 3 phi prime}, it follows readily that there is a radius $R>0$
	such that for all $k \in \Z$ we have $CS_{k-1} \cup CS_k \cup CS_{k+1} \subset D(x_k, R \rho_k )$.

	We first prove that \eqref{first estimate} holds.
	Fix $z \notin \cup_k D(x_k, 2R\rho_k)$ and let $\zeta\in D(x_k, R\rho_k).$
	In this case, $\abs{\zeta-x_k} \leq R \rho_k  \leq \abs{z-x_k}$.
    By $P_k (\zeta)=P_{k,n-1,z} (\zeta)$, denote  the Taylor polynomial of order $n-1$ of the function $\log (z-\zeta)$ at the point $x_k$.  Then,  since $|z-\zeta|\ge |z-x_k|/2$, it holds that
	\begin{equation*}
		\left| \log(z-\zeta)-P_k (\zeta)\right|\le \max_{\xi\in <x_k,\zeta>} \left|  \frac{(\zeta-x_k)^n }{(z-\xi)^n n} \right|
    \lesssim  \frac{\rho_k^n}{|z-x_k|^n },
	\end{equation*}
    where $<x_k,\zeta>$ is the segment in $\C$ with endpoints $x_k$ and $\zeta$.
	So,
    by the $n$ vanishing moments of $\nu_k$,
\begin{align*}
		\Abs{\int_\C  \log \abs{z-\zeta} \dif \nu(\zeta)} &=\Abs{\Re \int_\C  \log (z-\zeta) \dif \nu(\zeta)}
        \le \Abs{ \int_\C  \log (z-\zeta) \dif \nu(\zeta)}
        \\
        \le
        &
        \Abs{ \sum_{k \in \Z} \int_{CS_k}  \log (z-\zeta) \dif \nu_k(\zeta) }\le
         \sum_{k \in \Z} \Abs{  \int_{CS_k}  (\log(z-\zeta)-P_k (\zeta)) \dif \nu_k(\zeta) }
        \\
		&
        \lesssim
        \sum_{k \in \Z}  \frac{\rho_k^n }{\abs{z-x_k}^n }   \int_{CS_k}    \dif \abs{ \nu_k}(\zeta)
		\lesssim
        \sum_{k \in \Z}  \frac{\rho_k^n }{\abs{z-x_k}^n }.
\end{align*}
	Let   $k_z \in \N$ be such that $\Re z \in I_{k_z}$.
	To see that the sum converges  with a bound independent of $z$,   it suffices to establish that
    $\abs{z-x_k} \geq A\rho_k(\abs{k - k_z}^\gamma + 1)$,
    for some constant $A$  only depending on $R$.
	Recall that   $n \gamma > 1$, and $\gamma$ is
    the exponent appearing in Lemma \ref{lemma gamma doubling} for $\mu$.
    We leave it to the reader to find that such an estimate follows from the doubling property of $\mu$.

	Next, suppose that $z \in D(x_{k_z}, 2R\rho_{{k_z}})$ for some $k_z \in \N$.
    By the doubling property, there exists a number $q$, such that for $\abs{k-k_z} \geq q$,
    then $z \notin D(x_k,2R \rho_k )$. So, if we make the split
\begin{equation*}
		w(z) = \underbrace{\sum_{\abs{k-k_z}\leq q} \int_\C \log \abs{z-\zeta } \dif
    \nu_k(\zeta)}_{:=w_1(z)}+  \underbrace{\sum_{\abs{k-k_z}> q} \int_\C \log \abs{z-\zeta} \dif \nu_k(\zeta)}_{:=w_2(z)},
\end{equation*}
	then we can use the same method to get a bound $\abs{w_2(z)} \le B$  as we did in the first case above.
	To estimate   $w_1(z)$, we increase $R$ if necessary so that 
	  $\cup_{\abs{k-k_z}\leq q} CS_k \subset D(x_{k_z},R \rho_{k_z})$ (this is possible independently of $k$).
    Then we can use the first vanishing moment of $\nu_k$ to get
\begin{align*}
		- w_1(z) & =
        \sum_{\abs{k-k_z}\leq q} \int_{CS_k} \log \frac{R \rho_{k_z}}{\abs{z-\zeta}}
        \dif\left( \frac{\mu_k}{2\pi}  -  \sum_{\lambda \in \Lambda \cap I_k} \delta_\lambda -  \sum_{\sigma \in \Sigma_k}
        \delta_\sigma \right)(\zeta)
        \\
		&
        \lesssim
		\int_{D(x_{k_z}, R \rho_{k_z})} \log \frac{R \rho_{k_z} }{\abs{z-\zeta}} \dif \mu (\zeta).
\end{align*}
	Since $\dif \mu = \psi'(t) \dif t$, and $\psi'(t) \simeq \psi'(x_{k_z})$ for $t \in I(x_{k_z}, R \rho_{k_z})$,
    it follows by basic calculus that the
	integral on the right-hand side is comparable to $R$.

    We turn to the demonstration of \eqref{second estimate}.  Consider $\lambda \in \Lambda$ and $z \in D(\lambda, \eta \rho_\lambda)$, where
      $\eta>0$ is a  separation constant of $\Lambda$ from $\Sigma$.
 To get the lower estimate, we make 
    the same split   $w(z)  = w_1(z) + w_2(z)$ as above, using $\lambda$ in place of $k_z$. The
	estimate of $w_2(z)$ follows as before with the obvious modifications. In the lower estimate of $w_1(z)$, recall that above the expression
\begin{equation*}
    I := \sum_{\abs{k-k_z}\leq q} \int_{CS_k} \log \frac{R \rho_\lambda}{\abs{z-\zeta}}
    \dif\left(   \sum_{\lambda \in \Lambda \cap S_k} \delta_\lambda +   \sum_{\sigma \in \Sigma_k} \delta_\sigma \right)(\zeta)
\end{equation*}
	was estimated   simply as $I \geq 0$. The lower estimate of $w_1(z)$ follows in the current case, if   we replace this by
\begin{equation*} 
	I =  \sum_{\abs{k-k_z}\leq q} \sum_{\gamma \in (\Lambda \cap I_k) \cup \Sigma_k}
    \log \frac{R \rho_\lambda}{\abs{z-\gamma}}  \geq \log \frac{R \rho_{\lambda}}{ \abs{z-\lambda}}.
\end{equation*}
	As for the upper estimate of $w_1(z)$, since $\mu$ is a positive measure,
\begin{equation*}
		w_1(z)
		\leq  \log \frac{R \rho_{\lambda}}{\abs{z-\lambda}}  +
        \sum_{\abs{k-k_z} \leq q} \sum_{\gamma \in \Sigma_k \cup \Lambda\cap I_k \backslash \{ \lambda \}} \log \frac{R \rho_\lambda}{\abs{z-\gamma}}.
\end{equation*}
	It follows easily that the whole sum is bounded by some uniform constant, since every $\gamma$   on the right-hand side is at least at a euclidean distance of $\eta  \rho_\lambda/2$  from $z$.  This ends the proof. 
\end{proof}


\subsection{Peak functions}                                 \label{subsec_peak_functions}

As an application of the above multiplier construction, we obtain
a class of peak functions analogue to the ones used by Beurling in
his original work on sampling sequences.
First,
we record the following simple lemma. 

\begin{lemma}                                               \label{multiplier corollary}
    Let $\Psi$ and  $\psi$  be as above.
Then, for every $x_0\in \R$ there exist a function $f \in
\F_{\Psi}^{\infty}$ and a sequence $\L\subset Z(f)\cap\R$
that satisfy the following conditions, all with constants not  
depending on $x_0$:
\begin{itemize}
\item[(i)] The set $\L\cup\{x_0\}$ is separated.
\item[(ii)]  The quantity $|f(x_0)|e^{-\Psi (x_0)}$ is bounded below.
\item[(iii)] The map $x \longmapsto d_\psi(x,\L)$ is bounded above.
\item[(iv)] We have 
$|f(x)|\lesssim d_{\psi}(x,\L)e^{\Psi (x)}$
for every $x \in \R$.
 \end{itemize}
\end{lemma}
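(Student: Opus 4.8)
The natural approach is to apply Theorem \ref{multiplier theorem} to a suitably chosen separated sequence $\Lambda$ whose counting function has upper density strictly below $1/(2\pi)$, and whose complement near $x_0$ leaves a ``hole'' on which $f$ is forced to be large. Concretely, start from the canonical sequence of nodes: fix $\alpha$ and take points $\omega_m$ with $\psi(\omega_m) = \alpha + \pi m$, so that consecutive points are at $d_\psi$-distance $\pi$ and the sequence is separated. The union of two such sequences (with two different values of $\alpha$) has upper density $1/\pi > 1/(2\pi)$, so a single copy has density exactly $1/(2\pi)$ — too large. Instead I would take a sequence of $d_\psi$-density close to $1/(2\pi)$ but staying strictly below, for instance $\omega_m$ with $\psi(\omega_m)= \alpha + \beta m$ for a fixed $\beta > 2\pi$, which gives $D^+_\psi(\Lambda) = 1/\beta < 1/(2\pi)$; then modify $\Lambda$ near $x_0$ by deleting the (at most one or two) points $\lambda$ with $d_\psi(\lambda, x_0)$ small, so that $x_0$ is at $d_\psi$-distance $\gtrsim 1$ from all points of $\Lambda$, while preserving both the separation of $\Lambda \cup \{x_0\}$ and the density bound (deleting finitely many points per unit $\mu$-mass does not change $D^+_\psi$).

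\textbf{Key steps.} First, construct $\Lambda \subset \R$ as above: $d_\psi$-separated, $D^+_\psi(\Lambda) < 1/(2\pi)$, and $d_\psi(x_0, \Lambda) \geq c_0$ for a universal constant $c_0>0$, and also $d_\psi(x, \Lambda) \leq C_0$ for all $x \in \R$ (this holds automatically because consecutive nodes $\omega_m$ are at bounded $d_\psi$-distance); these give (i) and (iii), with constants independent of $x_0$ by the local doubling property \eqref{sec 3 phi prime} (recall that $\psi(\omega_{m+1}) - \psi(\omega_m) = \beta$ and $\psi'$ is locally comparable). Second, apply Theorem \ref{multiplier theorem} to $\Lambda$ to obtain $f \in \F^\infty_\Psi$ with $\Lambda \subset Z(f)$ and, from conclusion $(b)$ of that theorem, the two-sided estimate $|f(x) \e^{-\Psi(x)}| \simeq d_\psi(x,\lambda)$ for $x \in I_\psi(\lambda, \eta)$. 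Third, deduce (iv): for $x$ with $d_\psi(x, \Lambda) < \eta$ the estimate $|f(x)| \lesssim d_\psi(x,\Lambda) \e^{\Psi(x)}$ is exactly conclusion $(b)$; for $x$ with $d_\psi(x,\Lambda) \geq \eta$, the bound $|f(x) \e^{-\Psi(x)}| \leq \|f\|_{\F^\infty_\Psi} \lesssim 1 \lesssim d_\psi(x,\Lambda)/\eta$, using the global sup-norm bound. Fourth, deduce (ii): since $x_0$ lies at $d_\psi$-distance between $c_0$ and $C_0$ from $\Lambda$, pick the $\lambda \in \Lambda$ closest to $x_0$; if $c_0$ is chosen smaller than $\eta$ we may need to be a little careful, but by choosing $c_0$ comparable to (say, equal to) a value in $(0,\eta)$ where conclusion $(b)$ still applies we get $|f(x_0)\e^{-\Psi(x_0)}| \simeq d_\psi(x_0,\lambda) \gtrsim c_0 \gtrsim 1$; alternatively, if $x_0$ is pushed outside all the balls $I_\psi(\lambda,\eta)$, one argues via a direct lower bound on $|f|$ away from its zeros using the harmonicity of $w - \Psi + \log|g|$ as in the proof of Theorem \ref{multiplier theorem}. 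The cleanest route is to arrange $d_\psi(x_0, \Lambda) = \eta/2$ exactly, by choosing the phase shift $\alpha$ of the deleted-and-reinserted local node appropriately, so that (ii) follows immediately from $(b)$.

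\textbf{Main obstacle.} The only genuinely delicate point is the uniformity of all constants in $x_0$. The density condition, the separation constant, and the constant $\eta$ coming from Theorem \ref{multiplier theorem} all depend only on $\mu$ (equivalently on the doubling constant of $\psi'(t)\,dt$ and on $n$), not on where we center things, because the construction of $\Lambda$ is ``translation-covariant'' in the $d_\psi$-metric: shifting the index $m$ or the phase $\alpha$ produces another admissible sequence with the same quantitative parameters. One must check that the local modification near $x_0$ (deleting $O(1)$ points and reinserting one at controlled $d_\psi$-distance) can be done with the amount of modification bounded in terms of $\mu$ alone — this is immediate since $\mu(I_\psi(x_0,1)) \simeq 1$ by local doubling, so at most boundedly many nodes lie within $d_\psi$-distance $1$ of $x_0$. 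Once this uniformity is in hand, items (i)–(iv) all fall out of Theorem \ref{multiplier theorem}$(b)$ and the defining sup-norm bound, with no further computation.
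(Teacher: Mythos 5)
Your proposal is correct in outline and follows the same skeleton as the paper: build a sparse, $d_\psi$-arithmetic sequence adapted to $x_0$ (the paper takes $\Lambda=\{\lambda: d_\psi(x_0,\lambda)=4\pi k,\ k\neq 0\}$, of density $1/4\pi$), feed it to the multiplier construction, and read off (i), (iii), (iv) from separation, bounded gaps and the sup-norm bound together with conclusion $(b)$. Where you genuinely diverge is in how (ii) is secured. The paper keeps $x_0$ at macroscopic distance from $\Lambda$ and \emph{reopens} the proof of Theorem \ref{multiplier theorem}: it reruns the construction so that the auxiliary zero set $\Sigma$ stays separated from $\Lambda\cup\{x_0\}$ (not just from $\Lambda$), and then repeats the upper estimate of \eqref{second estimate} at $z=x_0$, which bounds $w(x_0)$ from above and hence $|f(x_0)|\e^{-\Psi(x_0)}$ from below, with constants depending only on the doubling constant and the $d_\psi$-distribution of $\Lambda$. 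Your primary route instead plants a zero of $f$ at $d_\psi$-distance comparable to $\eta$ from $x_0$ and reads (ii) directly off the two-sided estimate in conclusion $(b)$, using the theorem as a black box; this works and spares you from re-deriving \eqref{first estimate}--\eqref{second estimate}, at the price of a mild circularity ($\eta$ is only known after the theorem is applied, so you must first record --- as you do, and as the paper also must --- that $\eta$ and the constants in $(b)$ depend only on the doubling constant, the separation constant and the density margin of the family $\Lambda(x_0)$, and only then fix the offset), and of the separation in (i) being of size $\simeq\eta$ rather than of definite macroscopic size, which is still admissible since it is independent of $x_0$. Two caveats: your fallback argument for (ii) (``a direct lower bound on $|f|$ away from its zeros'') is \emph{not} available from the statement of Theorem \ref{multiplier theorem} alone, because a lower bound at a point far from $\Lambda$ requires knowing that the auxiliary zeros $\Sigma$ avoid a neighbourhood of $x_0$, which the theorem does not assert --- this is exactly the modification the paper makes; and there is a small arithmetic slip at the start: a sequence with consecutive $\psi$-increments $\pi$ has density $1/\pi$, not $1/(2\pi)$, though this does not affect your actual choice of spacing $\beta>2\pi$.
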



\begin{proof}
    Set $\Lambda = \set{\lambda \in \R : d_{\psi}(x_0, \lambda) = 4\pi k,  0\neq k \in \Z}$.
    Since $D^+_{\psi}(\Lambda) =1/4\pi$ we may use
    these $\L$ to
    repeat the construction from Theorem 3. We do this while keeping the
    sequence $\Sigma$ separated not only from $\L$ but also from
    $\L\cup\{x_0\}$. This can be done in the same way as in the
    proof of Theorem 3, with a constant of separation not
    depending on $x_0$. Note that the constants $B, C$ and $D$
    appearing in (10) and (11) depend only on the distribution of
    $\L$ (with respect to $d_{\psi}$) and the doubling constant of
    $\mu=\psi'(x)dx$. Hence, the bounds obtained in these
    constructions do not depend on $x_0$. Properties (i), (iii)
    and (iv) follow immediately. To see that
    property (ii) is satisfied we make an estimate similar to the one on the right
    side of (11), with $z=x_0$, and use the fact that $x_0$ is
    uniformly
    separated from all points in $\L\cup\Sigma$.
\end{proof}


We are now ready to construct our peak functions.
\begin{lemma} \label{peak lemma}
    Let $\Psi$ and $\psi$ be as above, and fix $a>0$ and $k \in \N$.
%
%
For every $x \in \R$ there is a function $h(x,\cdot)\in
\F_{a\Psi}^{\infty}$ that satisfies $h(x, x)=1$ and
\begin{equation} \label{cond on h}
    \abs{h(x, y)} \leq C \frac{1}{1+d^k_{\psi}(x,y)}e^{a(\Psi(y)-\Psi(x))}\qquad \forall y\in\R,
\end{equation}
where $C$ is a constant not depending on $x$. Moreover, the
following estimate holds whenever $k$ is big
enough:
\begin{equation}  \label{int estimate for h}
 \sup_{y\in
\R}\psi'(y)\int_{\R}|h(x, y)|^2e^{2a(\Psi(x)-\Psi(y))}dx< \infty.
\end{equation}
\end{lemma}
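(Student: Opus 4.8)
The plan is to build the peak function $h(x,\cdot)$ as a quotient of entire functions coming from the multiplier construction of Theorem \ref{multiplier theorem}, normalized so that it takes the value $1$ at $x$. First I would apply Lemma \ref{multiplier corollary} at the point $x_0 = x$ to obtain an entire function $f \in \F_\Psi^\infty$, together with a separated zero set $\Lambda \subset Z(f) \cap \R$ with $\Lambda \cup \{x\}$ separated, satisfying $|f(x)|e^{-\Psi(x)} \gtrsim 1$, $\sup_y d_\psi(y,\Lambda) \lesssim 1$, and $|f(y)| \lesssim d_\psi(y,\Lambda)e^{\Psi(y)}$ for all $y \in \R$. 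The key point of (iv) is that $f$ vanishes near \emph{every} point of $\R$: since $d_\psi(y,\Lambda)$ is bounded, $|f(y)e^{-\Psi(y)}|$ is $\lesssim d_\psi(y,\lambda)$ whenever $\lambda$ is the nearest point of $\Lambda$, so $f$ is small on a $d_\psi$-neighbourhood of that $\lambda$; combined with $|f(x)e^{-\Psi(x)}| \gtrsim 1$ this says that moving a bounded $d_\psi$-distance away from $x$ forces a definite drop in $|f e^{-\Psi}|$.

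Next I would define, for a large integer $N$ (with $N$ and $a$ controlling $k$), something like
\begin{equation*}
    h(x,y) := \left( \frac{f(y)}{f(x)} \cdot \frac{P_x(x)}{P_x(y)} \right)^{N},
\end{equation*}
where $P_x$ is an auxiliary entire function with a single real zero at $x$, chosen so that the quotient $f/P_x$ has no pole and so that $P_x$ compensates the zero of $f$ that is closest to $x$ — more precisely, one picks $P_x$ so that $(f/P_x)(x) \neq 0$ and $|P_x(y)| \simeq$ something controlled near $x$. (In the Fock-space setting of \cite{marco_massaneda_ortega-cerda2003} the analogous construction divides the multiplier by the linear factor vanishing at the base point and raises to a power; I would mimic that, using that $\Psi$, $\psi$ satisfy the same local-doubling hypotheses by Remark \ref{miamadre}.) The normalization $P_x(x)/f(x)$ gives $h(x,x) = 1$. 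The membership $h(x,\cdot) \in \F_{a\Psi}^\infty$ and the pointwise bound \eqref{cond on h} then follow from $|f(y)| \lesssim d_\psi(y,\Lambda) e^{\Psi(y)}$: on the region where $d_\psi(x,y) \geq c$ we bound each factor $|f(y)e^{-\Psi(y)}|$ by a constant and extract a decay factor $(1 + d_\psi(x,y))^{-1}$ from the $P_x$ in the denominator (using that $|P_x(y)|$ grows like $|x-y|$, hence like a power of $d_\psi(x,y)$ by Lemma \ref{technicallemma-2}), raised to the power $N$; choosing $N$ so that $N$ times this exponent exceeds $k$ yields \eqref{cond on h}. On the bounded region $d_\psi(x,y) \lesssim 1$ one uses the two-sided estimate $|f(y)e^{-\Psi(y)}| \simeq d_\psi(y,\lambda)$ near the nearest zero together with $|P_x(y)| \simeq |x-y| \simeq d_\psi(x,y)/\psi'(x)$ to see the quotient stays bounded, with all constants uniform in $x$ by Lemma \ref{multiplier corollary}.

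Finally, for \eqref{int estimate for h} I would fix $y \in \R$ and estimate
\begin{equation*}
    \psi'(y) \int_\R |h(x,y)|^2 e^{2a(\Psi(x)-\Psi(y))}\,dx
    \lesssim \psi'(y) \int_\R \frac{dx}{(1 + d_\psi(x,y))^{2k}},
\end{equation*}
using \eqref{cond on h} to kill the exponential weights. By splitting the real line into the $d_\psi$-annuli $\{ j \leq d_\psi(x,y) < j+1 \}$, $j \geq 0$, and using Lemma \ref{technicallemma-2} (and the doubling of $\mu$) to bound the Lebesgue length of each annulus by $\psi'(y)^{-1}$ times a polynomial in $j$ — concretely, $\mu$ of such an annulus is $\simeq 1$, and translating a $\mu$-length back to Euclidean length introduces at worst a power of $j$ via the doubling exponent $\alpha$ in Lemma \ref{technicallemma-2}(b) — the sum $\sum_j j^{O(1)} (1+j)^{-2k}$ converges once $k$ is large enough, and the leading $\psi'(y)$ cancels, giving a bound uniform in $y$. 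The main obstacle, and the place requiring the most care, is precisely this last length-of-annulus estimate: because the measure is only doubling (not comparable to Lebesgue measure), the Euclidean length of $\{ d_\psi(x,y) \in [j,j+1) \}$ can genuinely grow with $j$, so one must track how large a power of $j$ appears and then choose $k$ (equivalently the exponent $N$ in the definition of $h$) large enough to absorb it — this is where the doubling hypothesis is essential and where the analogue of \cite[Lemma 4]{marco_massaneda_ortega-cerda2003}, i.e. Lemma \ref{technicallemma-2}, does the real work.
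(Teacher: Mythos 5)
Your second half (the verification of \eqref{int estimate for h}) is fine and is essentially the paper's argument in discretized form: splitting into $d_\psi$-annuli and using Lemma \ref{technicallemma-2}/Remark \ref{technicallemma} to trade $\psi'(y)$ for $\psi'(x)$ at the cost of a fixed power of $d_\psi(x,y)$, then summing, is equivalent to the paper's split into $I_\psi(y,1)$ and its complement followed by the change of variables $t=\psi(x)-\psi(y)$. The gap is in the construction of $h$ itself. You take $f\in\F^\infty_{\Psi}$ from Lemma \ref{multiplier corollary}, divide by a \emph{single} compensating factor $P_x$, and raise the quotient to a large power $N$ to generate the decay of order $k$. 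Raising to the $N$-th power inflates the growth envelope from $e^{\Psi}$ to $e^{N\Psi}$: since $|f|e^{-\Psi}\simeq$ const away from its zeros (and is bounded below at $x$), your $h$ satisfies only $|h(x,y)|\lesssim e^{N(\Psi(y)-\Psi(x))}\cdot(\text{decay})$, which is \emph{not} $\lesssim e^{a(\Psi(y)-\Psi(x))}\cdot(\text{decay})$ when $N>a$ and $\Psi$ is unbounded on $\R$ (the relevant case); in particular $h(x,\cdot)\notin\F^\infty_{a\Psi}$ and \eqref{cond on h} fails. Note that $a$ is given and fixed (in the application it is a small $\epsilon$), while $N$ must be large to beat $k$, so you cannot choose them compatibly. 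There is also a wording slip: $P_x$ cannot vanish at $x$ itself, since $f(x)\neq0$ by property (ii); you presumably mean it vanishes at the zero of $f$ nearest to $x$.

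The paper avoids this tension without any powers: by property (iii) the zero set $\Lambda$ has bounded gaps in the $d_\psi$-metric, so an interval $I_\psi(x,L)$ with $L$ fixed large contains $M$ distinct zeros $\sigma_1,\dots,\sigma_M$ of $f$, and one sets
\begin{equation*}
    h(x,z)=\frac{f(z)}{(z-\sigma_1)\cdots(z-\sigma_M)}\cdot\frac{c(x)}{(\psi'(x))^{M}},
\end{equation*}
with $f$ produced by Lemma \ref{multiplier corollary} for the weight $a\Psi$ (not $\Psi$). Dividing by a degree-$M$ polynomial whose zeros are cancelled by zeros of $f$ keeps $h$ entire and does not change the exponential growth class, while each factor contributes decay $\gtrsim d_\psi(x,y)^{1/\alpha}$ by Lemma \ref{technicallemma-2}(b); taking $M$ large compared with $k\alpha$ gives \eqref{cond on h} with constants uniform in $x$. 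Your scheme can be repaired — e.g.\ run the multiplier construction for the rescaled weight $(a/N)\Psi$ (still doubling) and only then take the $N$-th power, or simply switch to dividing by $N$ distinct nearby zeros as above — but as written the power step breaks both the membership in $\F^\infty_{a\Psi}$ and the pointwise bound \eqref{cond on h}.
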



\begin{proof}
For $x_0\in \R$, let $f \in
\F_{a\Psi}^{\infty}$ be the function from Lemma \ref{multiplier
corollary}. Property (iii) of the lemma
implies that there exists $L>0$, not depending on $x_0$, such that
the interval $I_{\psi}(x_0,L)$ includes at least $M$ points from
the corresponding sequence $\L$. Choose such
$\sigma_1,...,\sigma_M\in \L$ and set
\[
h(x_0,z)=\frac{f(z)}{(z-\sigma_1)\cdot\cdot\cdot(z-\sigma_M)}\frac{c(x_0)e^{-\eps\Psi(x_0)}}{(\phi'(x_0))^M}
\]
where $c(x_0)$ is chosen to satisfy $h(x_0,x_0)=1$. It is clear
that $h(x_0,z) \in \F_{a\Psi}^{\infty}$ since outside of a big
enough disc we have $|h(x_0,z)|\lesssim |f(z)|$. The inequality
\eqref{cond on h} is satisfied whenever $M$ is big enough,
  as in the proof of \cite[Theorem 18]{marco_massaneda_ortega-cerda2003}. To prove \eqref{int estimate for h},
we use \eqref{cond on h} to see that  for  $x, y\in \R$  we have
\begin{align*}
\psi'(y) \int_{\R}|h(x,y)|^2e^{2a(\Psi(x)-\Psi(y))}&dx \lesssim
\int_{\R}\frac{\psi'(y)dx}{1+d^{2k}_{\psi}(x,y)} \\
&=  \left[ \int_{I_\psi(y,1)}+\int_{\R/I_\psi(y,1)}
\right]\frac{\psi'(y)dx}{1+d^{2k}_{\psi}(x,y)},
\end{align*}
where the constant does not depend on $y$. For the first integral
on the right-hand side, the needed estimate  is easily obtained.
Next, since $\psi'(x)\dif x$ is a doubling measure, by Remark \ref{technicallemma}, 
there exists a $\delta >0$ such that the second integral is
dominated by
\[
\int_{\R}\frac{d^{\delta}_{\psi}(x,y)\psi'(x)dx}{1+d^{2k}_{\psi}(x,y)}\lesssim
\int_{\R}\frac{\psi'(x)dx}{1+d^{2k-\delta}_{\psi}(x,y)}.
\]
By the change of variables $t=\psi(x)-\psi(y)$, this is seen to
converge for large $k$.
\end{proof}

\section{Proofs of main results}          \label{sec_Main_results}

\subsection{Necessary conditions for sampling and interpolation}

	To get the necessary conditions for a real sequence of points to be interpolating or sampling, we use the
    technique developed by Ramanathan and Steger \cite{ramanathan_steger1995} (see also \cite[Lemma 4]{ortega-cerda_seip1998}). Since we have canonical orthonormal bases in de Branges spaces, this theorem is easily applicable. 


\begin{theorem}
    Let $E$ be a Hermite-Biehler function with phase function $\phi$, and suppose that $\phi'(x) \dif x$ is a doubling
    measure on $\R$.

	If $\Gamma=\{ \gamma_{k} \}\subset \R$ is a separated sampling sequence for $H(E)$
	and   $\Lambda=\{ \lambda_{k} \}\subset \R $ is an interpolating sequence for $H(E)$, then
$D^+_\phi(\Lambda)\le D^+_\phi(\Gamma)$ and $D_\phi^-(\Lambda) \le D_\phi^-(\Gamma)$.
\end{theorem}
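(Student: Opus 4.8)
The plan is to run the classical Ramanathan–Steger argument, which compares two orthonormal-type families via a near-unitary ``transfer'' operator built from reproducing kernels. First I would set up the objects: since $\Gamma$ is separated sampling, the normalized reproducing kernels $\{\widetilde K_{\gamma_k}\}$ form a frame for $H(E)$, so there is a bounded invertible frame operator and a dual frame; since $\Lambda$ is interpolating, $\{\widetilde K_{\lambda_j}\}$ is a Riesz sequence. Fix a large radius $R$ and an interval $I$ with $\mu(I)=R$. Let $P_I$ be the orthogonal projection of $H(E)$ onto $\mathrm{span}\{\widetilde K_{\lambda_j} : \lambda_j\in I\}$, and consider the operator $T_I = \sum_{\gamma_k\in I'} \langle\,\cdot\,,\widetilde K_{\gamma_k}\rangle\, S^{-1}\widetilde K_{\gamma_k}$ restricted suitably, where $I'$ is a slight enlargement of $I$ (enlargement measured in the $d_\phi$-metric, of width controlled by the doubling constant) and $S$ is the frame operator of $\Gamma$. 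The core of the method is the pair of estimates: (i) $T_I$ acts as near-identity on $P_I H(E)$ up to an error that is $o(R)$ in trace, and (ii) the rank of $T_I$ is at most $|\Gamma\cap I'|$. Comparing traces then gives $|\Lambda\cap I| \le |\Gamma\cap I'| + o(R)$, and dividing by $R$ and taking the appropriate $\limsup$/$\liminf$ over intervals $I$ yields the density inequalities, once one checks that replacing $I'$ by $I$ costs only a factor $1+o(1)$ by doubling (Lemma \ref{lemma gamma doubling}).

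The two places where the de Branges geometry enters, and which I would treat carefully, are the ``concentration'' estimates that make step (i) work. One needs that for $f$ in the span of kernels $\widetilde K_{\lambda_j}$ with $\lambda_j\in I$, most of the ``mass'' $\sum_k |\langle f,\widetilde K_{\gamma_k}\rangle|^2$ comes from $\gamma_k\in I'$; equivalently, the tail $\sum_{\gamma_k\notin I'} |\langle f,\widetilde K_{\gamma_k}\rangle|^2$ is small relative to $\|f\|^2$. This is proved from off-diagonal decay of the kernel inner products $\langle \widetilde K_{\lambda}, \widetilde K_{\gamma}\rangle$, which from the explicit formula
\[
\langle \widetilde K_\lambda, \widetilde K_\gamma\rangle = \widetilde K_\gamma(\lambda) = \frac{|E(\lambda)|\sin(\phi(\lambda)-\phi(\gamma))}{\sqrt{\pi\phi'(\gamma)}\,(\lambda-\gamma)}
\]
together with Lemma \ref{technicallemma-2}(b) and Remark \ref{technicallemma}, decays like a power of $d_\phi(\lambda,\gamma)^{-1}$ once the points are $\phi$-separated; summing against a $\phi$-separated sequence (using the Carleson/Plancherel-type bookkeeping behind Corollary \ref{separation and sampling}) gives the required smallness for points far from $I$ in the $d_\phi$-metric. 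The second ingredient is that the number of $\gamma_k$ in the enlarged interval $I'$ is comparable to $|\Gamma\cap I|$, again by the doubling property.

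The main obstacle I anticipate is making the trace comparison in step (i) quantitatively clean: one must show $\bigl|\operatorname{tr}(P_I T_I P_I) - \dim P_I H(E)\bigr| = o(R)$ uniformly over the relevant intervals $I$ with $\mu(I)=R$, and this requires the off-diagonal decay above to be integrated against the near-tight-frame structure with a genuinely uniform $o(R)$ bound — not just a bound for each fixed $I$. The standard route is to choose an intermediate scale: split $I$ into a bounded ``boundary layer'' of $d_\phi$-width $W$ and a bulk, estimate the boundary contribution by $O(W)$ (using separation, which bounds $|\Gamma\cap J|\lesssim \mu(J)$ for any interval $J$), and estimate the bulk contribution using the power decay so that it is $\le \epsilon(W) R$ with $\epsilon(W)\to 0$; then let $R\to\infty$ first and $W\to\infty$ afterwards. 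All constants produced this way depend only on the doubling constant of $\mu$, the separation constants, and the frame/Riesz bounds, so they are uniform in $I$, which is exactly what the density definitions require.
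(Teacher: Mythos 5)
Your proposal is correct and follows essentially the same Ramanathan--Steger route as the paper: a trace comparison between the projection onto $\mathrm{span}\{\widetilde K_{\lambda_j}:\lambda_j\in I\}$ and a transfer operator built from the dual frame of $\{\widetilde K_{\gamma_k}\}$ over a $d_\phi$-enlarged interval, with the off-diagonal decay of $\langle \widetilde K_\lambda,\widetilde K_\gamma\rangle$ coming from the explicit kernel formula, $\phi$-separation, and Lemma \ref{technicallemma-2}. The only real difference is bookkeeping: the paper shows the per-kernel error $\|P_\Gamma\widetilde K_{\lambda_k}-\widetilde K_{\lambda_k}\|^2\lesssim r^{-1/\alpha}$ uniformly in $\lambda_k$, so a fixed enlargement width $r$ already gives the lower bound $(1-\epsilon)\,|\Lambda\cap I_\phi(x,t)|$ and no boundary-layer split or double limit $R\to\infty$, $W\to\infty$ is needed.
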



\begin{proof}
	Recall that   $K_z$ is the reproducing kernel of $H(E)$ at the point $z \in \C$, while $\widetilde{K}_z = K_z/\norm{K_z}_{H(E)}$.
    We denote by $\{ G_{k} \}$ and $\{ L_{k}\}$ the dual frame of $\{ \widetilde{K}_{\gamma_{k}} \}$, and the biorthogonal basis of $\{ \widetilde{K}_{\lambda_{k}} \}$, respectively.

    For $t,r>0,$ $x\in \R$ we define the sets 
\begin{equation*}
	W_\Gamma (x)= \mathrm{span} \, \{ G_{k} : \gamma_k\in I_\phi(x,t+r) \},\;\;\; W_\Lambda (x)=  \mathrm{span} \, \{ \widetilde{K}_{\lambda_k} : \lambda_k\in I_\phi(x,t)\},
\end{equation*}
    and the corresponding orthogonal projections $P_\Gamma$ and $P_\Lambda$ onto $W_\Gamma(x)$ and $W_\Lambda(x)$, respectively.

	The idea of the proof is the following observation. It is clearly sufficient   to show that,   given any $\epsilon>0$, then for $r >0$ big enough, and all $t, x \in \R$, the trace of the operator
	$T=P_\Lambda P_\Gamma {P_\Lambda}$, as an operator on $W_\Lambda(x)$,
    satisfies
\begin{equation}                                                        \label{homo}
	 		(1-\epsilon) \abs{ \Lambda \cap I_\phi(x,t)} \leq tr(T)
			\leq
			 \abs{ \Gamma\cap I_\phi(x,t+r)}.
\end{equation}
	
     On the finite dimensional space $W_\Lambda (x)$, the  trace of  the self-adjoint and positive operator $T$ is computed by
$$tr (T)=\sum_{\lambda_k \in I_\phi (x,t)} \langle T \widetilde{K}_{\lambda_k},L_k \rangle.$$
	The upper estimate follows easily from the fact that eigenvalues of $T$ are bounded by one in modulus:
$$tr (T)\le \mbox{rank} (T)\le \dim W_\Gamma (x)\le \abs{ \Gamma\cap I_\phi(x,t+r)}.$$
	For the lower estimate,
	we use the Cauchy-Schwartz inequality and the biorthogonality property of $\{ L_k \}$ to get
	$$\langle T \widetilde{K}_{\lambda_k},L_k \rangle \ge 1- \| P_\Gamma \widetilde{K}_{\lambda_k}-\widetilde{K}_{\lambda_k} \|_{H(E)} \| P_\Lambda L_k \|_{H(E)}.$$
	The term $\norm{P_\Lambda L_k}_{H(E)}$ is uniformly bounded, therefore it only remains to make  $\| P_\Gamma \widetilde{K}_{\lambda_k}-\widetilde{K}_{\lambda_k} \|_{H(E)}$
	small.
	Since $\Gamma$ is sampling, a simple computation involving the projection $P_\Gamma$ yields
\begin{equation*}
	    \| P_\Gamma \widetilde{K}_{\lambda_k}-\widetilde{K}_{\lambda_k} \|_{H(E)}^2
	    \lesssim
	       \sum_{\gamma_n\not \in I_\phi(x,t+r)} |\langle \widetilde{K}_{\lambda_k},\widetilde{K}_{\gamma_n}\rangle|^{2}.
 \end{equation*}
    Now let $\lambda_k \in I_\phi(x,t),$ $\gamma_n\not \in I_\phi(x,t+r)$ and let $\delta>0$ be the separation constant of $\Gamma$ with respect to the metric $d_\phi$.
	Inserting the expression of our reproducing kernels, and making  a simple estimate, this is      bounded above by
\begin{equation*}
	\frac{1}{\phi'(\lambda_k)}
    \sum_{\gamma_n\not \in I_\phi(x,t+r)} \int_{I_\phi (\gamma_n,\delta) } \frac{ds}{(\lambda_k-s)^2}
    \lesssim
    \frac{1}{\phi'(\lambda_k) }\int_{ \R \setminus I_\phi (\lambda_k,r/2)}\frac{ds}{(\lambda_k-s)^2}\lesssim \frac{1}{r^{1/\alpha}}
\end{equation*}
	The left-most inequality holds   as soon as $r > 2\delta$, while the right-most holds
	for  the  $\alpha >1$ given by Lemma \ref{technicallemma-2}.
	With this, \eqref{homo} follows.
\end{proof}


\subsection{Sufficiency for interpolation}
	For a de Branges space $H(E)$ with $\Phi$ defined as in \eqref{usual psi definition}, we recall that $\Delta \Phi = 2 \phi'(x) \dif x$. So with $\Psi = \Phi$,
    given a $\phi$-separated sequence $\Lambda \subset \R$ for which $D^+_\phi(\Lambda)<1/\pi,$
    it follows from Theorem \ref{multiplier theorem} that for small enough $\eta >0$ there exists a function
    $f \in \mathcal{F}^\infty_{\Phi}$ such that uniformly in  $\lambda \in \Lambda$ we have
\begin{equation}                                                                \label{estimate on the derivative}
    \abs{f'(\lambda)} \simeq \abs{E(\lambda)} \phi'(\lambda).
\end{equation}
    and
\begin{align}
		 \abs{(f/E)(x)/(x-\lambda)} \simeq \phi'(\lambda),\quad x \in I_\phi(\lambda, \eta)      \label{uniform estimate on the derivative}
\end{align}
 
We now use  this multiplier $f$ to solve the interpolation problem $F(\lambda) = w_\lambda$, where the data $(w_\lambda)$  satisfies
$\sum_{\lambda \in \Lambda} \abs{w_\lambda}^2 / {\abs{E(\lambda)}^{2} \phi'(\lambda)} < \infty$.
Our objective is to show that the solution is given by  the following Lagrange-type interpolation function:
\begin{equation*}
	F(z) = \sum_{\lambda \in \Lambda} w_\lambda \frac{f(z)}{(z-\lambda) f'(\lambda)}
\end{equation*}
We proceed by duality. First, 
\begin{equation*}
	\norm{F}_{H(E)}
	= \sup_{\underset{\norm{h}_2 = 1}{h \in H^2(\C_+)}} \Abs{\sum  \frac{w_\lambda}{f'(\lambda)} \int_\R \overline{h(t)} \frac{f(t)/E(t)}{t-\lambda} \dif t}.
\end{equation*}
At each term of this sum, we multiply and divide by $\sqrt{\phi'(\lambda)} \abs{E(\lambda)}$.

The Cauchy-Schwarz inequality, in combination with the assumption on $\{w_\lambda\}$ and  \eqref{estimate on the derivative},   yields
\begin{equation} \label{valid inequality}
	\norm{F}_{H(E)}^2 \lesssim \sup_{\underset{\norm{h}_{2}= 1}{h \in H^2(\C_+)}} 
	 \sum_\lambda \frac{1}{\phi'(\lambda)} \Abs{H\left(  \frac{\bar{h} f}{E}  \right)(\lambda)}^2,
\end{equation}
where $H$ denotes the Hilbert transform   on the real line.

Given $\lambda \in \Lambda$, we consider the decomposition
\begin{equation} \label{decomposed hilbert}
	 H\left(  \frac{\bar{h} f}{E}  \right)(\lambda)= \left( \int_{d_\phi(t,\lambda)<\eta }
    +  \int_{d_\phi(t,\lambda)>\eta}  \right) \overline{h(t)} \frac{f(t)/E(t)}{t-\lambda} \dif t  := A + B.
\end{equation}
With this, the Cauchy-Schwarz inequality, Lemma \ref{technicallemma-2} and  \eqref{uniform estimate on the derivative}, imply
\begin{equation} \label{local estimate}
	\abs{A}^2
	 \lesssim \phi'(\lambda) \int_{d_\phi(t,\lambda)<\eta} \abs{h(t)}^2 \dif t.
\end{equation}
As for $B$,  we disconnect the $\lambda$ appearing in the domain of integration and in the function, and define
\begin{equation*}
	 B (\xi) := \int_{d_\phi(t,\lambda)>\eta} \overline{h(t)} \frac{f(t)/E(t)}{t-\xi} \dif t.
\end{equation*}
This function is analytic for $\xi \in D_\phi(\lambda, \eta)$.
In a similar way we define the function $A(\xi)$, which is, however,  only analytic for $\xi \notin \R$.
By subharmonicity,  and since the radius of $D_\phi(\lambda,\eta/2)$ is comparable to $1/\phi'(\lambda)$, we get the inequality
\begin{align*}
	 \abs{B}^2 &\lesssim  {\phi'(\lambda)^2}  \int_{D_\phi(\lambda, \eta/2)} \Abs{ B(\xi)  }^2 \dif m(\xi) \\
	 &\leq {\phi'(\lambda)^2}  \int_{D_\phi(\lambda, \eta/2)} \Abs{H\left(  \frac{\bar{h} f}{E}  \right)(\xi)}^2  \dif m(\xi)
	   +
	    {\phi'(\lambda)^2}  \underbrace{   \int_{D_\phi(\lambda, \eta/2)} \Abs{
	    A(\xi)}^2
	    \dif    m(\xi) }_{:= C}.
\end{align*}
Since $f/E$ is bounded on $\R$, and the Hilbert transform is bounded on $L^2(\R)$, we use the fact that $\phi'(\lambda) \chi_{D_\phi(\lambda, \eta/2)}$ gives a Carleson measure
    for both $H^2(\C_+)$ and $H^2(\C_-)$, with constant only depending on $\eta$, to see that
   \begin{equation*}
	\phi'(\lambda) {C}
	  \lesssim
	  \Norm{ \frac{\bar{h}f }{E} \chi_{I_\phi(\lambda,\eta)}}^2_{L^2(\R)}
	\lesssim  \int_{I_\phi(\lambda,\eta)} \abs{h(t)}^2 \dif t.
\end{equation*}
Plugging these estimates into  \eqref{decomposed hilbert}, yields
\begin{equation*}
	\Abs{	 H\left(  \frac{\bar{h} f}{E}  \right)(\lambda)}^2 \lesssim
    \phi'(\lambda)^2 \int_{D_\phi(\lambda, \eta/2)} \Abs{ H\left(  \frac{\bar{h} f}{E}  \right)(\xi) }^2  \dif m(\xi)
	+ \phi'(\lambda) \int_{d_\phi(t,\lambda) < \eta} \abs{h(t)}^2 \dif t.
\end{equation*}
Due to the $\phi$-separation of the points $\lambda$ we get
\begin{equation*}
	\sum_{\lambda \in \Lambda} \int_{d_\phi(t,\lambda)< \eta} \abs{h(t)}^2 \dif t \leq \norm{h}^2_{L^2(\R)}.
\end{equation*}
 In addition, by Lemma \ref{CMBsec: carleson lemma}, 
\begin{equation*}
	\sum_{\lambda \in \Lambda} \phi'(\lambda)  \int_{D_\phi(\lambda,\eta/2)} \Abs{ H\left(  \frac{\bar{h} f}{E}  \right)(\xi)  }^2  \dif m(\xi)
	\lesssim
	\Norm{H\left(  \frac{\bar{h} f}{E}  \right)}^2_{H^2(\C_+)} + \Norm{H\left(  \frac{\bar{h} f}{E}  \right)}^2_{H^2(\C_-)}.
\end{equation*}
This yields the desired conclusion  as the Cauchy transform is bounded on $L^2(\R)$, and $f/E$ is bounded on $\R$. 


\subsection{Sufficiency for sampling}

We follow the ideas of Beurling in the case of the Paley-Wiener space \cite{beurling1989}, as adapted to  the Fock space setting by Marco, Massaneda and Ortega-Cerd{\`a} in \cite{marco_massaneda_ortega-cerda2003}. 

For the space $\mathcal{F}_{\Psi}^{\infty}$, defined in Section \ref{sec_Basic_constructions},
we say that a sequence $\L \subseteq \R$ is a sampling sequence
if there exists some constant $C>0$
such that for all $f\in
\mathcal{F}_{\Psi}^{\infty}$ we have
\[
\|f\|_{\mathcal{F}_{\Psi}^{\infty}}\leq C\sup_{\l\in
\L}|f(\l)|e^{-\Psi (\l)}.
\]

The first lemma states that in order to prove that a sequence
is sampling in a de Branges space, it is enough to prove that it
is sampling in such a uniform norm space. 
%
\begin{lemma}                                                   \label{tango}
    Let $E$ be a Hermite-Biehler function with phase function $\phi$,   suppose that $\phi'(x) \dif x$ is a doubling
    measure on $\R$ and let $\Phi$ be defined by \eqref{usual psi definition}. Moreover, suppose that $\L$ is a
$\phi$-separated sequence.

 If, for some $\eps>0$, the sequence $\L$
is a sampling sequence for $\F_{(1+\epsilon)\Phi}^{\infty}$, then
it is a sampling sequence for $H(E)$.
\end{lemma}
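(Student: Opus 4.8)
The plan is to verify the two defining inequalities of a sampling sequence for $H(E)$ separately. The upper (Bessel) inequality is free: $\phi'(x)\,dx$ is doubling, hence locally doubling, so by Corollary~\ref{separation and sampling} a single $\phi$-separated sequence is automatically a Bessel sequence, i.e.\ $\sum_{\lambda\in\L}|f(\lambda)|^2/\norm{K_\lambda}_{H(E)}^2\lesssim\norm{f}_{H(E)}^2$. All the content is the lower bound; recalling that $\norm{K_\lambda}_{H(E)}^2=\pi^{-1}|E(\lambda)|^2\phi'(\lambda)$, what must be shown is
\[
\norm{f}_{H(E)}^2=\int_\R\Abs{\frac{f(x)}{E(x)}}^2 dx\;\lesssim\;\sum_{\lambda\in\L}\frac{|f(\lambda)/E(\lambda)|^2}{\phi'(\lambda)},\qquad f\in H(E).
\]

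The idea is to transfer $f$ into $\F^\infty_{(1+\eps)\Phi}$ by multiplying it by a peak function. Since $\Delta\Phi=2\phi'(x)\,dx$ is doubling and $\phi$ is smooth and satisfies (b) of Lemma~\ref{local doubling equivalences}, the constructions of Section~\ref{sec_Basic_constructions} apply with $\Psi=\Phi$ (so $\psi=2\phi$, with metric $d_\psi$ comparable to $d_\phi$). Fix $k\in\N$, large, to be specified below, and for $x_0\in\R$ set $h_{x_0}:=e^{\eps\Phi(x_0)}h(x_0,\cdot)$, where $h(x_0,\cdot)$ is the peak function of Lemma~\ref{peak lemma} with $a=\eps$ and this $k$; thus $h_{x_0}(x_0)=e^{\eps\Phi(x_0)}$ and, by \eqref{cond on h}, $|h_{x_0}(x)|e^{-\eps\Phi(x)}\le C\big(1+d_\phi^{k}(x_0,x)\big)^{-1}$ for all $x\in\R$, with $C$ independent of $x_0$. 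On the other hand the reproducing-kernel inequality gives $|f(x)/E(x)|\le\norm{f}_{H(E)}\sqrt{\phi'(x)/\pi}$, while Remark~\ref{technicallemma} (together with local doubling for small distances) yields $\phi'(x)\lesssim\big(1+d_\phi(x_0,x)^{\delta}\big)\phi'(x_0)$. Hence, with $g:=f\,h_{x_0}$, for all $x\in\R$,
\[
|g(x)|\,e^{-(1+\eps)\Phi(x)}=\Abs{\frac{f(x)}{E(x)}}\,|h_{x_0}(x)|\,e^{-\eps\Phi(x)}\;\lesssim\;\sqrt{\phi'(x_0)}\,\frac{1+d_\phi(x_0,x)^{\delta/2}}{1+d_\phi^{k}(x_0,x)},
\]
and for $k>\delta/2$ the right-hand side is bounded in $x$, so $g\in\F^\infty_{(1+\eps)\Phi}$.

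Now I would invoke the hypothesis. Since $\L$ samples $\F^\infty_{(1+\eps)\Phi}$ and $|g(x_0)|e^{-(1+\eps)\Phi(x_0)}=|f(x_0)/E(x_0)|$ (because $h_{x_0}(x_0)=e^{\eps\Phi(x_0)}$), combining $|f(x_0)/E(x_0)|\le\norm{g}_{\F^\infty_{(1+\eps)\Phi}}\le C\sup_{\lambda\in\L}|g(\lambda)|e^{-(1+\eps)\Phi(\lambda)}$ with \eqref{cond on h} applied to $h_{x_0}(\lambda)$ gives the pointwise bound
\[
\Abs{\frac{f(x_0)}{E(x_0)}}\;\le\;C'\sup_{\lambda\in\L}\frac{|f(\lambda)/E(\lambda)|}{1+d_\phi^{k}(x_0,\lambda)},\qquad x_0\in\R,
\]
with $C'$ independent of $x_0$ and $f$. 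Squaring, bounding the supremum by the sum, integrating in $x_0$ and using Tonelli's theorem,
\[
\norm{f}_{H(E)}^2\;\lesssim\;\sum_{\lambda\in\L}\Abs{\frac{f(\lambda)}{E(\lambda)}}^2\int_\R\frac{dx_0}{\big(1+d_\phi^{k}(x_0,\lambda)\big)^2}.
\]
The inner integral is evaluated by the substitution $t=\phi(x_0)-\phi(\lambda)$: then $dx_0=dt/\phi'(x_0)$ with $d_\phi(x_0,\lambda)=|t|$, and by Remark~\ref{technicallemma} (and local doubling near $t=0$) one has $1/\phi'(x_0)\lesssim(1+|t|^{\delta})/\phi'(\lambda)$, so the integral is $\lesssim\phi'(\lambda)^{-1}\int_\R(1+|t|^{\delta})(1+|t|^{k})^{-2}\,dt\lesssim\phi'(\lambda)^{-1}$ as soon as $2k-\delta>1$. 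This yields the desired lower bound and, together with the Bessel bound, shows that $\L$ is sampling for $H(E)$.

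The only delicate point is the choice of $k$: it must be taken large enough — in terms of the exponent $\delta$ of Remark~\ref{technicallemma}, hence of the doubling constant of $\phi'(x)\,dx$ — to serve two ends at once, namely to guarantee that $g=f\,h_{x_0}$ genuinely lies in $\F^\infty_{(1+\eps)\Phi}$ and that the final integral converges with the correct weight $\phi'(\lambda)^{-1}$. Beyond this, the argument is a routine assembly of the reproducing-kernel bound, the peak estimate \eqref{cond on h}, Remark~\ref{technicallemma}, and Tonelli's theorem, and I do not anticipate any further obstacle.
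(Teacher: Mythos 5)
Your argument is correct in outline, and it is a genuinely different route from the paper's. The paper proves the lemma by Beurling's duality: sampling for $\F^{\infty}_{(1+\eps)\Phi}$ yields reconstruction coefficients $g_\l(x)$ with $\sum_{\l}|g_\l(x)|\le K$, which are then applied to the test functions $K_w(z)h(x,z)$ to produce the pointwise expansion \eqref{the series} of the reproducing kernel and hence a frame-type expansion $g=\sum a_\l\widetilde K_\l$ with $\sum|a_\l|^2\lesssim\norm{g}^2_{H(E)}$. You instead apply the sup-norm sampling inequality directly to $f\,h(x_0,\cdot)$, obtain the discrete pointwise majorant $\abs{f(x_0)/E(x_0)}\lesssim\sup_\l |f(\l)/E(\l)|/(1+d_\phi^k(x_0,\l))$, and integrate; the inner integral you compute is exactly the quantity controlled by \eqref{int estimate for h}, which you could cite instead of re-deriving. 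Your route is more elementary and self-contained (it avoids the duality argument imported from Beurling's collected works and the Bessel-sequence bookkeeping for the $\zeta_\l$), at the cost of being specific to the $L^2$ lower bound; the paper's argument produces the explicit kernel expansion \eqref{the series}, which is of independent use. The upper bound via Corollary \ref{separation and sampling} and the density computation with Remark \ref{technicallemma} (with $2k-\delta>1$) are fine.

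There is one step you must repair before the sampling hypothesis can be invoked: membership of $g=f\,h_{x_0}$ in $\F^{\infty}_{(1+\eps)\Phi}$. The norm of that space is a supremum over all of $\C$, while your displayed bound controls $|g(x)|\e^{-(1+\eps)\Phi(x)}$ only for $x\in\R$; the estimate \eqref{cond on h} is likewise only stated on $\R$, and for a general $f\in H(E)$ the quantity $|f(z)|\e^{-\Phi(z)}$ need not be bounded on $\C$ (off the axis one only has the $H^2$ bound $|f(z)/E(z)|\lesssim (\Im z)^{-1/2}$), so boundedness on $\R$ is not automatic grounds for membership. The gap is closable by a standard Hardy-space argument: on $\overline{\C_+}$ fix a branch of $E^{\eps}$ (possible since $E$ has no zeros there) and write $|g(z)|\e^{-(1+\eps)\Phi(z)}=\Abs{\tfrac{f(z)}{E(z)}}\cdot\Abs{\tfrac{h_{x_0}(z)}{E(z)^{\eps}}}$; the first factor is in $H^2(\C_+)$ and the second is in $H^\infty(\C_+)$ because $h(x_0,\cdot)\in\F^\infty_{\eps\Phi}$, so their product lies in $H^2(\C_+)$, and an $H^2(\C_+)$ function with bounded boundary values is bounded in $\C_+$ (Poisson representation), with the same bound. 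The lower half-plane is handled identically using $f^\ast/E\in H^2(\C_+)$ and $h_{x_0}^\ast E^{-\eps}$. This shows $\sup_{\C}|g|\e^{-(1+\eps)\Phi}=\sup_{\R}|g|\e^{-(1+\eps)\Phi}<\infty$, after which your application of the $\F^{\infty}_{(1+\eps)\Phi}$ sampling inequality, and the rest of the proof, goes through.
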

\begin{proof}
It is sufficient to show that if $\L$ is a sampling
sequence for $\F_{(1+\epsilon)\Phi}^{\infty}$, then every $g\in
H(E)$ is pointwise equal to a series of the form $\sum
a_{\l}\widetilde{K}_{\l}$ with $\sum|a_{\l}|^2\lesssim
\|g\|^2_{H(E)}$, where $K_w$ is the reproducing kernel in $H(E)$ at $w \in \R$.

So, assume that for some $\epsilon>0$, the sequence $\L$ is a
sampling sequence for $\F_{(1+\epsilon)\Phi}^{\infty}$.  Following  a duality
argument in  \cite[pp. 348-358]{beurling1989},
 there exist functions
$g_{\lambda}(x)$, with $\sum_{\l\in
\L}|g_{\lambda}(x)|\leq K$ uniformly in $x$, so that for all
$f \in \mathcal{F}_{(1+\epsilon)\Phi}^{\infty}$ such that $|f(x)|e^{-(1+\epsilon)\Phi (x)}\rightarrow 0$   as  $|x|\rightarrow
\infty$,
 we have 
\begin{equation}\label{the condition for func in F^0}
f(x)e^{-(1+\epsilon)\Phi (x)}=\sum_{\l \in \L}f(\l)e^{-(1+\epsilon)\Phi
(\l)}g_{\l}(x)\qquad \forall x \in \R.
\end{equation}

Let $h(x,\cdot)$ be the function from Lemma \ref{peak lemma}
(corresponding to the $\epsilon$ from above),
and set $f_{w, x}(z)=K_{w}(z)h(x,z)$.
Clearly,
$|f_{w, x}(y)|e^{-(1+\epsilon)\Phi(y)} \rightarrow 0$ as $\abs{y} \rightarrow \infty$,
and so   \eqref{the condition for func in F^0} holds for this function.
In particular, we have $h(x,x)=1$, so
  the formula gives pointwise
\begin{equation}\label{the series}
K_{w}(x)e^{-\Phi  (x)}=\sum_{\l \in \L}e^{\epsilon(\Phi(x)-\Phi
(\l))}\sqrt{\phi'(\l)}h(x,\l)g_{\l}(x)\widetilde{K}_{\l}(w), \qquad
\forall x\in \R.
\end{equation}
Moreover, by the
estimate \eqref{int estimate for h} of Lemma \ref{peak lemma}, the functions
\[
\zeta_{\l}(x)=e^{\epsilon(\Phi  (x)-\Phi
(\l))}\sqrt{\phi'(\l)}h(x,\l)g_{\l}(x)
\]
belong to $L^2(\R)$.

Now, to show that the series in (\ref{the series}) converges in
$L^2(\R)$, it suffices to verify that the sequence $\{\zeta_{\l}\}$ is a
Bessel sequence in $L^2(\R)$ and that the sequence
$\{\widetilde{K}_{\l}(w)\}_{\l\in\L}$, for fixed $w$, is in
$l^2(\L)$. The last claim follows from Corollary \ref {separation and sampling},
while the first holds as
  $\sum_{\l\in
\L}|g_{\lambda}(x)|\leq K$.

Given $g\in H(E)$, we have
$g(x)e^{-\Phi  (x)}\in L^2(\R)$. Therefore,   the equality
(\ref{the series}) yields
$g(w)= \langle g(x), K_w(x)\rangle_{H(E)}
=\sum
a_{\l}\widetilde{K}_{\l}(w)$. Hence, the Bessel property of $\{ \zeta_\lambda \}$ implies that
\[
\sum_{\l \in \Lambda}|a_{\l}|^2\lesssim\|g(x)e^{-\Phi
(x)}\|_{L^2(\R)}^2=\|g\|_{H(E)}^2.
\]
This concludes the proof.
\end{proof}


Following the approach of Beurling \cite[pp. 348--358]{beurling1989}, we treat sampling sequences
in $\mathcal{F}_{\Psi}^\infty$ by looking at certain translations. We need to consider
translates, both of the sequence on which we sample and the function being sampled. Since our
spaces are not translation invariant,   we modify the concept of translations accordingly.

As in Section \ref{sec_Basic_constructions}, we consider real valued subharmonic functions   $\Psi$, for which
 $\mu = \psi'(t) \dif t$ is a   doubling measure on $\R$ induced by $\Delta\Psi$  for some suitable function $\psi \in C^\infty(\R)$. Moreover, we suppose that $\psi$ satisfies  both   (b)  and (c) of Lemma \ref{local doubling equivalences}. I.e., there exist constants such that
 \begin{align} \label{sec 4 phi prime}
	\psi'(x) \simeq \psi'(y) \; \; \text{whenever} \; \; \abs{\psi(x) - \psi(y)} \leq 1.
\end{align}
and
 \begin{align} \label{sec 4 phi double prime}
	\abs{\psi''(x)} \lesssim (\psi'(x))^2.
\end{align}

Given a point $x_0\in \R$ we denote by $\tau_{x_0}:\C\rightarrow
\C$ the  {scaled translation}
\begin{equation*}
\tau_{x_0}(z)=(\psi'(x_0))^{-1}z+x_0.
\end{equation*}
With this, given a set $\Lambda$, we make the following definitions,
\begin{align*}
	\L_{x_0}&=\{\tau^{-1}_{x_0}(\l):\l\in\L\}, \\
 \psi_{x_0}&=\psi\circ \tau_{x_0}-\psi({x_0}), \\
 	\mu_{x_0}(I) &=\mu( \tau_{x_0}(I)).
 \end{align*}
Note that evaluating  a function $f$ on the sequence $\L$ is the same as evaluating
$f\circ \tau_{x_0}$ on   $\L_{x_0}$.  Also, $\psi_{x_0}(0)=0$ and $\psi'_{x_0}(0)=1$, and the
Radon-Nikodym derivative of   $\mu_{x_0}$ is $\psi'_{x_0}$.

To translate  $\Psi$, we consider the operator
\[
K[f](z)=  \frac{1}{2\pi} \int_{\R} \left[\log
\Abs{1-\frac{z}{t} } -\textrm{Re}\big(P_q({z}/{t})\big)1\!\!1_{\R\setminus
(-1,1)}(t)\right]  f(t) \dif t,
\]
where $P_q$ is the Taylor polynomial of degree $q$ of $\log(1-x)$
around $x=0$. When it exists, the image $K[f]$ is a subharmonic function which
satisfies $\Delta K[f]= f(x)\dif x$.
To make the integral   converge for $f = \psi'$, observe that
since $\psi$ gives a doubling measure,  by Remark \ref{technicallemma},  there exists some $\gamma>0$ such that
$\psi'(x)\lesssim 1+ |x|^{\gamma}$ for all $x\in \R$ . A basic estimate ensures that the choice $q>\gamma+1$  is sufficient. (The number $\gamma$
depends only on the doubling constant.) 
Given $x_0\in \R$, we now define
\begin{equation*}
	\Psi_{x_0}:=K[\psi'_{x_0}]. 
\end{equation*}

Recall, that the Fr\'{e}chet distance between two closed sets
$E,F\subseteq \R$ is defined by $[E,F]:=\max\{\sup_{x\in E}
d(x,F),\sup_{y\in F} d(y,E)\}$. We use Beurling's definition of
strong and compactwise limits of sets. A sequence $\{Q_j\}$ of
closed sets in $\R$, converges strongly to a set $Q\subseteq \R$,
denoted $Q_j\rightarrow Q$, if $[Q,Qj]\rightarrow 0$. The sequence
$Q_j$ converges compactwise to $Q$, denoted $Q_j \rightharpoonup
Q$, if for every compact set $K$ in $\R$ we have $(K\cap Q_j)
\cup\partial K\rightarrow (K\cap Q)\cup\partial K$.

 Given a set $\L \subseteq \R$, we say that $\L^*\subseteq \R$ is a
weak limit of $\L$, if there exists a sequence $\{x_k\}\subseteq
\R$ such that $\L_{x_k}\rightharpoonup\L^*$.


\begin{lemma}\label{w-l of L}
Let $\psi$ be as above, and suppose that the sequence $\Lambda$ is  $\psi$-separated. If  $\{x_k\}\subseteq \R$ is a sequence, then there
exists a subsequence $\{x_{k_n}\}$ such that the following holds.
\begin{itemize}
\item[(i)]For some sequence $\Lambda^\ast$, we have  $\L_{x_{k_n}}\rightharpoonup\L^*$.
\item[(ii)]\label{w-l of eta} For  a function $\psi^*$, it holds that
$\psi_{x_{k_n}}\rightarrow \psi^*$ and $\psi'_{x_{k_n}}\rightarrow (\psi^*)'$ uniformly on compacts.
	\item[(iii)] Let $\psi^\ast$ be as above. Then it gives a doubling measure with doubling   constant smaller than the one of $\mu = \psi'(x)\dif x$. Moreover,    $(\psi^*)'$ satisfies \eqref{sec 4 phi prime}, with the same constants or better.
\item[(iv)]Let $\psi^\ast$ be as above and set $\Psi^\ast =K[(\psi^*)']$, then $\Psi_{x_{k_n}}\rightarrow \Psi^*$ uniformly on compacts in $\C$.
\end{itemize}
\end{lemma}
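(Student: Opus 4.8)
The plan is to extract the subsequence by a standard diagonal/compactness argument and then verify that all four conclusions survive the limit. First I would address (i): since $\Lambda$ is $\psi$-separated, each translated sequence $\Lambda_{x_k}$ is separated in the ordinary Euclidean metric with a uniform constant (because $\psi'_{x_k}(0)=1$ and, by the local doubling property \eqref{sec 4 phi prime}, $\psi'_{x_k}$ stays comparable to $1$ on any fixed compact set, uniformly in $k$). A uniformly separated family of sets in $\R$ is precompact for compactwise convergence — one chooses, for each $m\in\N$, a subsequence along which $\Lambda_{x_k}\cap[-m,m]$ converges (finitely many points in a compact set), and then diagonalizes over $m$. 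This gives the subsequence $\{x_{k_n}\}$ and the limit set $\Lambda^\ast$.

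Next, for (ii) and (iii), I would pass to a further subsequence so that the functions $\psi_{x_k}-\psi(x_k)$ converge. The key point is an equicontinuity estimate: on a fixed interval $[-R,R]$, $\psi_{x_k}$ is increasing with $\psi_{x_k}(0)=0$, $\psi'_{x_k}(0)=1$, and by \eqref{sec 4 phi prime} together with \eqref{sec 4 phi double prime} (which translates to $\abs{\psi''_{x_k}}\lesssim(\psi'_{x_k})^2$ with the \emph{same} constant, since this scales correctly), one controls $\psi'_{x_k}$ and $\psi''_{x_k}$ uniformly on $[-R,R]$. Hence $\{\psi_{x_k}\}$ and $\{\psi'_{x_k}\}$ are uniformly bounded and equicontinuous on compacts, so by Arzelà–Ascoli a subsequence converges uniformly on compacts to a limit; a further diagonalization over $R$ handles all compacts at once. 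The limit $\psi^\ast$ inherits \eqref{sec 4 phi prime}: if $\abs{\psi^\ast(x)-\psi^\ast(y)}\le 1$, then for large $n$ one has $\abs{\psi_{x_{k_n}}(x)-\psi_{x_{k_n}}(y)}\le 1+o(1)$, so $\psi'_{x_{k_n}}(x)\simeq\psi'_{x_{k_n}}(y)$ with the original constants, and passing to the limit gives $\psi^\ast{}'(x)\simeq\psi^\ast{}'(y)$. The doubling constant of $\mu^\ast=(\psi^\ast)'(x)\dif x$ is no worse than that of $\mu$: for a fixed interval $I$, $\mu_{x_{k_n}}(2I)\le C\,\mu_{x_{k_n}}(I)$ for every $n$ (doubling is scale-invariant, so $\mu_{x_k}$ has the same doubling constant as $\mu$), and since $\psi'_{x_{k_n}}\to(\psi^\ast)'$ uniformly on $2I$ we may pass to the limit in the integrals; the liminf of the constants can only decrease.

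Finally, for (iv) I would show $\Psi_{x_{k_n}}=K[\psi'_{x_{k_n}}]\to K[(\psi^\ast)']=\Psi^\ast$ uniformly on compacts in $\C$. Here the main obstacle is the non-uniform tail behavior of the kernel defining $K$: the integrand involves $\log\abs{1-z/t}$ minus a Taylor correction, integrated against $\psi'_{x_{k_n}}(t)$, and one must control this for $\abs{t}$ large. The point is that, as noted in the text, $\psi'(x)\lesssim 1+\abs{x}^\gamma$ with $\gamma$ depending only on the doubling constant; the same bound, with the same $\gamma$, holds for every $\psi'_{x_{k_n}}$ because translating and rescaling $\mu$ does not change its doubling constant. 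Hence the choice $q>\gamma+1$ makes the kernel integrable against all $\psi'_{x_{k_n}}$ simultaneously, with a uniform tail bound: for $z$ in a fixed compact set and $\abs{t}>T$, the bracketed kernel is $O(\abs{z/t}^{q})=O(\abs{t}^{-q})$, so its integral against $1+\abs{t}^\gamma$ over $\abs{t}>T$ is $O(T^{\gamma+1-q})$, uniformly in $n$ and in $z$. On the complementary region $\abs{t}\le T$, $\psi'_{x_{k_n}}\to(\psi^\ast)'$ uniformly, and the kernel is a fixed locally integrable function of $t$ (with an integrable logarithmic singularity at $t=z$ handled by a standard uniform-integrability argument), so that part of the integral converges. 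Splitting into $\abs{t}\le T$ and $\abs{t}>T$ and letting first $n\to\infty$ then $T\to\infty$ gives the claimed uniform convergence on compacts. The tail estimate in (iv) is the step requiring the most care; everything else is Arzelà–Ascoli plus the scale-invariance of the doubling condition.
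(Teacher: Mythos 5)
Your overall route is the same as the paper's: compactness plus a diagonal argument for (i), uniform bounds on $\psi'_{x_k}$, $\psi''_{x_k}$ and Arzel\`a--Ascoli for (ii), scale-invariance of the doubling constant and passage to the limit for (iii), and for (iv) a split of the integral defining $K$ into $\abs{t}\le T$ and $\abs{t}>T$, with a tail bound coming from the uniform polynomial growth $\psi'_{x_k}(t)\lesssim 1+\abs{t}^{\gamma}$ (indeed you give more detail on (iv) than the paper does, and that part is fine).

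There is, however, one step whose stated justification would fail: the claim, used both for the uniform Euclidean separation in (i) and for the equicontinuity in (ii), that $\psi'_{x_k}\simeq 1$ on any fixed compact set, uniformly in $k$, ``by the local doubling property \eqref{sec 4 phi prime}'' together with the scaled version of \eqref{sec 4 phi double prime}. The scaled inequality $\abs{\psi''_{x_k}}\le C(\psi'_{x_k})^2$ with $\psi'_{x_k}(0)=1$ only controls $\psi'_{x_k}$ on $\abs{t}<1/C$ (the comparison function $1/(1-Ct)$ blows up at $t=1/C$), and \eqref{sec 4 phi prime} only transfers comparability across sets of bounded $d_{\psi}$-diameter; neither gives, by itself, a bound on $[-R,R]$ for $R\ge 1/C$. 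Indeed, a weight whose rescaled profile follows $1/(1-Ct)$ up to height $M$ and then plateaus satisfies \eqref{sec 4 phi prime}--\eqref{sec 4 phi double prime} with uniform constants while $\psi'_{x_k}(t)$ is as large as you like at a fixed $t>1/C$; such a measure is not globally doubling, and that is precisely what rescues the statement. The correct argument (the one the paper uses implicitly, and which you yourself use in part (iv)) invokes the standing hypothesis that $\mu=\psi'(x)\dif x$ is doubling: by Lemma \ref{technicallemma-2}(b) (or Remark \ref{technicallemma}), $d_{\psi}\bigl(x_k,\tau_{x_k}(t)\bigr)\lesssim R^{\alpha}$ for $\abs{t}\le R$, uniformly in $k$, and iterating \eqref{sec 4 phi prime} over boundedly many unit $d_\psi$-steps then gives $\psi'(\tau_{x_k}(t))\simeq\psi'(x_k)$, i.e.\ $\psi'_{x_k}\simeq 1$ on $[-R,R]$ with constants depending only on $R$ and the doubling constant. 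With that substitution the rest of your argument goes through and coincides with the paper's proof.
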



\begin{remark}
We write
$(\L_{x_{k_n}},\Psi_{x_{k_n}},\psi_{x_{k_n}})\rightharpoonup
(\L^*,\Psi^*,\psi^*)$, and denote the set of all such week
limits by $W(\L,\Psi,\psi)$.
\end{remark}


\begin{proof}
$(i)$: Fix some $a>0$. For $\tau_{x_k}^{-1}(\lambda) \in \Lambda_{x_k} \cap [-a,a]$, by Remark \ref{miamadre}, it holds that $d_\psi(x_k, \lambda) \lesssim a^\alpha$ for some $\alpha>0$. Since in addition $\Lambda$ is $\psi$-separated, we get
$\sup_k \abs{(\L_{x_{k}}\cap [-a,a])}<\infty$. Passing to a subsequence we can assume that the number of points in each such set
is constant. By a compactness argument we now find a set $\Lambda_a$ such that $\L_{x_{k_n}}\cap
[-a,a]\rightarrow \L_a$. Letting $a$ tend to infinity, we obtain $\Lambda^\ast$ from a usual diagonal argument.

$(ii)$:
For $t$ in an interval, there are constants, only depending on the euclidean length, such that  $\psi'(\tau_{x_k}(t))\simeq \psi'(x_k)$.
From this it follows that  the functions  $\psi'_{x_{k}}(t)$ and
 $\psi''_{x_{k}}(t)$ are  uniformly bounded on the interval. 
By the Arzel\`a-Ascoli theorem and Cantor diagonalization, there exists a subsequence $\{x_{k_n}\}$ and a function $\psi^*$ such
that $\psi'_{x_{k_n}}\rightarrow (\psi^*)'$ uniformly on compacts.

$(iii)$: By definition, the measure  $\mu_{x_k}$ is   doubling   with a
constant not bigger than the constant of $\mu$, and taking limits the statement about $\mu^\ast = (\psi^\ast)'(t) \dif t$ follows.
Similarly, to show that  \eqref{sec 4 phi prime} holds for $\psi^\ast$, it is sufficient to establish it for the $\psi_{x_k}$.

$(iv)$: To show that $\abs{K[\psi'_{x_k}]-K[(\psi^*)']}$ tends to zero on a given compact, split
the domain of integration into a large interval $(-r,r)$ and its complement.
The latter part is made small by choosing $r$ and the degree of the Taylor polynomial large
enough, depending on the doubling constant of $\psi'(x) \dif x$. The integral over $(-r,r)$ is then handled by
observing that the kernel of the   operator is integrable and letting $k$ tend to infinity.
\end{proof}


In the next lemma, we list the properties preserved by weak limits.
 

\begin{lemma} \label{w-l of L2}
Let $\psi$ be as above, and suppose that the sequence $\Lambda$ is $\psi$-separated. If  $\{x_k\}\subseteq \R$ is a sequence such that   $(\psi_{x_k})' \rightarrow (\psi^\ast)'$ uniformly on compacts  for some function $\psi^\ast$, and $\Lambda_{x_k} \rightharpoonup \Lambda^\ast$  for some sequence $\Lambda^\ast$, then the following holds:
\begin{itemize}
	\item[(i)] $\Lambda^\ast$ is $\psi^\ast$-separated.
	\item[(ii)] $D^{-}_{\psi}(\L)\leq D^{-}_{\psi^*}(\L^*)$.
\end{itemize}
\end{lemma}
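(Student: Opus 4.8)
\textbf{Proof plan for Lemma \ref{w-l of L2}.}

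The plan is to deduce both conclusions directly from the definition of compactwise convergence $\Lambda_{x_k}\rightharpoonup \Lambda^\ast$ together with the uniform-on-compacts convergence $(\psi_{x_k})'\to(\psi^\ast)'$, translating the hypotheses on $\Lambda$ through the scaled translations $\tau_{x_k}$. The first thing I would record is the elementary bookkeeping identity: a point $\lambda\in\Lambda$ corresponds to $\tau_{x_k}^{-1}(\lambda)=\psi'(x_k)(\lambda-x_k)\in\Lambda_{x_k}$, and since $\psi_{x_k}=\psi\circ\tau_{x_k}-\psi(x_k)$, one has $d_{\psi_{x_k}}(\tau_{x_k}^{-1}(a),\tau_{x_k}^{-1}(b))=d_\psi(a,b)$ for all $a,b$. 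Thus $\Lambda_{x_k}$ is $\psi_{x_k}$-separated with exactly the same separation constant $\epsilon$ as $\Lambda$ is $\psi$-separated, uniformly in $k$; and likewise $\abs{\Lambda_{x_k}\cap I}$ equals $\abs{\Lambda\cap \tau_{x_k}(I)}$ with $\mu_{x_k}(I)=\mu(\tau_{x_k}(I))$.

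\emph{Part (i).} Suppose $\lambda^\ast,\mu^\ast\in\Lambda^\ast$ are distinct; I want $d_{\psi^\ast}(\lambda^\ast,\mu^\ast)\geq \epsilon$. Fix a compact interval $K$ containing both points in its interior. By compactwise convergence, for all large $n$ there are points $p_n,q_n\in\Lambda_{x_{k_n}}\cap K$ with $p_n\to\lambda^\ast$ and $q_n\to\mu^\ast$ (here I would note $p_n\ne q_n$ for large $n$ since the limits are distinct). Then $d_{\psi_{x_{k_n}}}(p_n,q_n)=\abs{\psi_{x_{k_n}}(p_n)-\psi_{x_{k_n}}(q_n)}\geq\epsilon$. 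Since $\psi_{x_{k_n}}\to\psi^\ast$ uniformly on $K$ (which follows from $(\psi_{x_{k_n}})'\to(\psi^\ast)'$ uniformly on compacts together with $\psi_{x_{k_n}}(0)=0$, integrating) and $p_n,q_n$ converge, letting $n\to\infty$ gives $d_{\psi^\ast}(\lambda^\ast,\mu^\ast)\geq\epsilon$, which is the desired $\psi^\ast$-separation.

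\emph{Part (ii).} Write $D:=D^-_\psi(\Lambda)$; if $D=0$ there is nothing to prove, so assume $D>0$ and fix $0<\beta<D$. I must show $D^-_{\psi^\ast}(\Lambda^\ast)\geq\beta$, i.e. that for every large $r$ and every interval $I$ with $\mu^\ast(I)=r$ one has $\abs{\Lambda^\ast\cap I}\geq\beta r$. Fix such an $I=(c-s,c+s)$. Using $(\psi_{x_{k_n}})'\to(\psi^\ast)'$ uniformly on a compact containing $I$ together with $\psi_{x_{k_n}}(0)=0$, I get $\psi_{x_{k_n}}\to\psi^\ast$ uniformly on that compact, hence $\mu_{x_{k_n}}(I)\to\mu^\ast(I)=r$; enlarging $I$ by an arbitrarily small amount I may assume $\mu_{x_{k_n}}(I)\geq r-o(1)$ (or pass to a slightly larger interval $I'\supset I$ of $\mu^\ast$-measure $r+1$, say, so that $\mu_{x_{k_n}}(I')\geq r$ eventually). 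By the definition of $D^-_\psi$ and $\mu_{x_{k_n}}(\cdot)=\mu(\tau_{x_{k_n}}(\cdot))$, for $r$ large enough and $n$ large we have $\abs{\Lambda_{x_{k_n}}\cap I'}=\abs{\Lambda\cap\tau_{x_{k_n}}(I')}\geq\beta\cdot\mu(\tau_{x_{k_n}}(I'))=\beta\,\mu_{x_{k_n}}(I')\geq\beta r$. Now the $\psi_{x_{k_n}}$-separation (with uniform constant $\epsilon$) bounds the number of points of $\Lambda_{x_{k_n}}$ in the small collar $I'\setminus I$ by a constant independent of $n$; more importantly, compactwise convergence on the compact $\overline{I'}$ says $(\overline{I'}\cap\Lambda_{x_{k_n}})\cup\partial I'\to(\overline{I'}\cap\Lambda^\ast)\cup\partial I'$ strongly, so the limiting configuration $\Lambda^\ast\cap I'$ receives at least $\liminf_n\abs{\Lambda_{x_{k_n}}\cap I}$ points (points can only disappear into $\partial I'$ in the limit, not be created in the interior); hence $\abs{\Lambda^\ast\cap I'}\geq\beta r$, and letting $I'\downarrow I$ and $r\to\infty$ yields $D^-_{\psi^\ast}(\Lambda^\ast)\geq\beta$. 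Since $\beta<D$ was arbitrary, $(ii)$ follows.

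The one delicate point — the step I expect to be the main obstacle — is the last one: making rigorous the assertion that compactwise (Fr\'echet) convergence does not \emph{lose} mass in the interior of an interval, i.e. controlling the passage from a lower bound on $\abs{\Lambda_{x_{k_n}}\cap I}$ to a lower bound on $\abs{\Lambda^\ast\cap I}$. This requires care with boundary points (a point of $\Lambda_{x_{k_n}}$ sitting near $\partial I$ may escape $I$ in the limit), which is exactly why one works with a slightly larger interval $I'$, uses the uniform separation to ensure distinct approximating points stay distinct and to bound the collar contribution, and exploits that the $\mu^\ast$-measure of the collar $I'\setminus I$ can be made negligible relative to $r$ by choosing $r$ large. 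The convergence $\mu_{x_{k_n}}(I)\to\mu^\ast(I)$ is routine from the uniform convergence of the $\psi_{x_{k_n}}$, and the separation statement in $(i)$ is a clean limiting argument; it is only the density inequality that needs this combinatorial attention to the interval endpoints.
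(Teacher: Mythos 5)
Your proposal is correct and follows essentially the same route as the paper: transfer the separation and the point counts through $\tau_{x_k}$ (so that $\Lambda_{x_k}$ is $\psi_{x_k}$-separated with the same constant and $\mu_{x_k}(I)=\mu(\tau_{x_k}(I))$), integrate the derivative convergence to get $\psi_{x_k}\to\psi^\ast$ uniformly on compacts and hence $\mu_{x_k}(I)\approx\mu^\ast(I)$, and control the count discrepancy between $\Lambda_{x_k}\cap I$ and $\Lambda^\ast\cap I$ by a bounded boundary term which vanishes after dividing by $r$. The "delicate point" you flag is exactly what the paper handles with its observation that the Fr\'echet limit can change the count by at most $2$ (one point per endpoint, thanks to the uniform separation), so your collar argument is the same bookkeeping in slightly different clothing.
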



\begin{proof}  
$(i)$: First, we note that, if $\L$ is $\psi$-separated by some constant,
 then each
$\L_{x_k}$ is $\psi_{x_k}$-separated by the same constant.
Using the convergence of $(\psi_{x_k})'$ and $\Lambda_{x_k}$ to $(\psi^\ast)'$ and $\Lambda^\ast$, respectively, the conclusion follows.

$(ii)$: By the definitions of $\mu_{x_k}$  and the density, it follows that for any $\epsilon> 0$ we can choose $r>0$ sufficiently large to get
\begin{equation*}
\frac{\min_{\mu_{x_k}
(I)=r} \abs{I\cap\L_{x_k}}}{r}
=\frac{\min_{\mu
(J)=r}\abs{J\cap\L}}{r} >D^{-}_{\psi}(\L)-\epsilon.
\end{equation*}
Observe that for big enough $k$, we have $|\abs{(I\cap
\L^*)}-\abs{(I\cap \L_{x_k})}|\leq 2$ (this is because Fr\'echet limits allows an extra element on each endpoint of $I$, and the sequences are uniformly separated).
In addition, if $\mu^*(I) = r$ then $\mu_{x_k}(I)>(1-\epsilon)r$ since
  $\psi_{x_k}\rightarrow\psi^*$ uniformly on compacts, and so we obtain for   big enough $r$ 
  that
 \[
\frac{\min_{\mu^*(I)=r}\abs{I\cap \L^*}+2}{r}\geq
(1-\epsilon)(D^{-}_{\psi}(\L)-\epsilon).
\]
By taking limits we get the result.
\end{proof}


Given $x_0\in \R$, 
let $H_{x_0}$ be an entire
function with  $\Psi_{x_0}-\Psi\circ
\tau_{x_0}$ as its real part, 
and  define an operator
$T_{x_0}:\mathcal{F}^{\infty}_{\Psi}\rightarrow
\mathcal{F}^{\infty}_{\Psi_{x_0}}$
by
\[
T_{x_0} :  f \longmapsto f\circ \tau_{x_0} e^{H_{x_0} }.
\]
The proof of the following lemma is   straight-forward and omitted.


\begin{lemma} \label{w-l of f} Let $\Psi$ and $\psi$ be   as above.
\begin{itemize}
\item[(i)]$T_{x_0}$ is an isometry between $ \mathcal{F}^{\infty}_{\Psi}$
and $ \mathcal{F}^{\infty}_{\Psi_{x_0}}$. Moreover
\begin{equation}
\label{isometry} |T_{x_0}f(z)|e^{-\Psi_{x_0}(z)}=
|f \circ \tau_{x_0}(z)|e^{-\Psi \circ \tau_{x_0}(z)}.
\end{equation}
\item[(ii)] Suppose that $\{ f_k \}$
 is a sequence of functions in $\mathcal{F}^{\infty}_{\Psi}$, uniformly bounded
in norm, and let $\{x_k\}\subseteq \R$. If
$\psi'_{x_k}\rightarrow(\psi^*)'$ uniformly on compacts,  then there exists a subsequence
$\{x_{k_n}\}$ for which $T_{x_{k_n}}f_{k_n}$ converges uniformly
on compacts to a function in $ \mathcal{F}^{\infty}_{\Psi^*}$.
\end{itemize}
\end{lemma}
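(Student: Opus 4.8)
The plan is to prove the two parts independently; each reduces to a change of variables together with a normal-families argument, so no ideas beyond those already used in the construction of the operator $K$ and in Lemma \ref{w-l of L} are required.

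For $(i)$, I would first check that the entire function $H_{x_0}$ exists, i.e.\ that $u:=\Psi_{x_0}-\Psi\circ\tau_{x_0}$ is harmonic on $\C$. Since $\tau_{x_0}(z)=(\psi'(x_0))^{-1}z+x_0$ is a complex affine map with real positive leading coefficient, a distributional change of variables in $\int_{\C}\Psi(\tau_{x_0}(z))\,\Delta\varphi(z)\,\dif m(z)$ shows that $\Delta(\Psi\circ\tau_{x_0})$ is supported on $\R$ and equals $\psi_{x_0}'(t)\,\dif t$ there; this coincides with $\Delta\Psi_{x_0}=\Delta K[\psi'_{x_0}]$, so $\Delta u=0$ and, by Weyl's lemma, $u$ has an entire primitive $H_{x_0}$. (One also records here that $\Psi_{x_0}$ is a finite subharmonic function, since the degree $q$ fixed for $P_q$ already dominates the polynomial growth of $\psi'_{x_0}$.) The identity \eqref{isometry} is then immediate, because $|T_{x_0}f(z)|e^{-\Psi_{x_0}(z)}=|f\circ\tau_{x_0}(z)|\,e^{\Re H_{x_0}(z)-\Psi_{x_0}(z)}=|f\circ\tau_{x_0}(z)|\,e^{-\Psi\circ\tau_{x_0}(z)}$. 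Taking the supremum over $z\in\C$ and substituting $w=\tau_{x_0}(z)$, which is a bijection of $\C$, gives $\|T_{x_0}f\|_{\mathcal{F}^{\infty}_{\Psi_{x_0}}}=\|f\|_{\mathcal{F}^{\infty}_{\Psi}}$. Finally $T_{x_0}f$ is entire, being the product of the nonvanishing entire function $e^{H_{x_0}}$ with $f\circ\tau_{x_0}$, and $T_{x_0}$ is onto with inverse $g\mapsto(g\,e^{-H_{x_0}})\circ\tau_{x_0}^{-1}$; hence it is a surjective isometry.

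For $(ii)$, I would set $g_k:=T_{x_k}f_k\in\mathcal{F}^{\infty}_{\Psi_{x_k}}$. By $(i)$ the norms $\|g_k\|_{\mathcal{F}^{\infty}_{\Psi_{x_k}}}=\|f_k\|_{\mathcal{F}^{\infty}_{\Psi}}$ are bounded by a constant $C$ independent of $k$, so $|g_k(z)|\le C\,e^{\Psi_{x_k}(z)}$ for every $z\in\C$. From the hypothesis $\psi'_{x_k}\to(\psi^*)'$ uniformly on compacts and the normalisation $\psi_{x_k}(0)=0$ one gets $\psi_{x_k}\to\psi^*$ uniformly on compacts, and then, arguing as in Lemma \ref{w-l of L}$(iv)$, that $\Psi_{x_k}=K[\psi'_{x_k}]\to\Psi^*$ uniformly on compacts in $\C$. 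In particular $\{\Psi_{x_k}\}$ is uniformly bounded on each compact set, so $\{g_k\}$ is a locally uniformly bounded family of entire functions; Montel's theorem then yields a subsequence $g_{k_n}$ converging uniformly on compacts to an entire function $g$. Letting $n\to\infty$ in $|g_{k_n}(z)|\le C\,e^{\Psi_{x_{k_n}}(z)}$ gives $|g(z)|\le C\,e^{\Psi^*(z)}$, i.e.\ $g\in\mathcal{F}^{\infty}_{\Psi^*}$, which is the assertion.

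I expect the only step requiring real care to be the uniform-in-$k$ control of the tail of the integral defining $K[\psi'_{x_k}]$ that makes the convergence $\Psi_{x_k}\to\Psi^*$ go through; but this is handled exactly as in Lemma \ref{w-l of L}, using that by part $(iii)$ of that lemma the doubling constants of $\mu_{x_k}=\psi'_{x_k}(t)\,\dif t$ do not exceed that of $\mu$, so a single growth exponent $\gamma$, hence a single degree $q>\gamma+1$, works simultaneously for all $k$ and for $\psi^*$.
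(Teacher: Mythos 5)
Your proposal is correct; the paper itself omits the proof of Lemma \ref{w-l of f} as ``straight-forward'', and your argument is precisely the intended one: for $(i)$, the distributional change of variables showing $\Delta(\Psi\circ\tau_{x_0})=\psi_{x_0}'(t)\,\dif t$ so that $\Psi_{x_0}-\Psi\circ\tau_{x_0}$ is harmonic and admits the entire completion $H_{x_0}$, from which \eqref{isometry} and surjectivity are immediate; for $(ii)$, the identity \eqref{isometry} plus the uniform-on-compacts convergence $\Psi_{x_k}=K[\psi'_{x_k}]\to\Psi^*$ (argued as in Lemma \ref{w-l of L}, with the uniform doubling/growth control giving a single admissible degree $q$) followed by Montel's theorem. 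No gaps.
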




It is not difficult to see that if $\L$ is a $\psi$-separated
sampling sequence for $\mathcal{F}_{\Psi}^{\infty}$ and
$(\L^*,\Psi^*,\psi^*)\in W(\L,\Psi,\psi)$, then $\L^*$ is
a sampling sequence for $\mathcal{F}_{\Psi^*}^{\infty}$ and, in
particular, a uniqueness set for the space. We will need the
opposite direction of this claim, which is given in the following
lemma.


\begin{lemma}\label{finaly using w-l thing}
Let $\Psi$ and $\psi$ be   as above, and suppose that the sequence $\L$ is $\psi$-separated.
If the sequence
$\L^*$ is a uniqueness set for $\mathcal{F}_{\Psi^*}^{\infty}$ whenever
$(\L^*,\Psi^*,\psi^*)\in W(\L,\Psi,\psi)$, then
$\L$ is a sampling sequence for $\mathcal{F}_{\Psi}^{\infty}$.
\end{lemma}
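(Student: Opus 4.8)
The plan is to argue by contradiction, following the classical Beurling compactness scheme adapted to our non-translation-invariant setting. Suppose $\L$ is not a sampling sequence for $\mathcal{F}_\Psi^\infty$. Then there exists a sequence of functions $f_k \in \mathcal{F}_\Psi^\infty$ with $\|f_k\|_{\mathcal{F}_\Psi^\infty} = 1$ but $\sup_{\l \in \L} |f_k(\l)| e^{-\Psi(\l)} \to 0$. Since the supremum defining the norm is $1$ (or can be taken arbitrarily close to $1$), for each $k$ choose a point $x_k \in \R$ where $|f_k(x_k)| e^{-\Psi(x_k)} \geq 1/2$. The idea is to recenter everything at $x_k$: apply the scaled translation $\tau_{x_k}$ and the isometry $T_{x_k}$ from Lemma \ref{w-l of f}, setting $g_k := T_{x_k} f_k \in \mathcal{F}^\infty_{\Psi_{x_k}}$. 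By \eqref{isometry}, the $g_k$ have uniformly bounded norm and satisfy $|g_k(0)| e^{-\Psi_{x_k}(0)} \geq 1/2$, while $\sup_{\l^\ast \in \L_{x_k}} |g_k(\l^\ast)| e^{-\Psi_{x_k}(\l^\ast)} \to 0$.

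The next step is to extract weak limits. By Lemma \ref{w-l of L}, pass to a subsequence so that $(\L_{x_{k_n}}, \Psi_{x_{k_n}}, \psi_{x_{k_n}}) \rightharpoonup (\L^\ast, \Psi^\ast, \psi^\ast) \in W(\L, \Psi, \psi)$, and by part (ii) of Lemma \ref{w-l of f}, passing to a further subsequence, $g_{k_n} \to g$ uniformly on compacts for some $g \in \mathcal{F}^\infty_{\Psi^\ast}$. Taking limits in the pointwise normalization at $0$ gives $|g(0)| e^{-\Psi^\ast(0)} \geq 1/2$, so in particular $g \not\equiv 0$. On the other hand, I claim $g$ vanishes on $\L^\ast$: indeed, fix $\l^\ast \in \L^\ast$; by the definition of compactwise convergence there are points $\l^\ast_{k_n} \in \L_{x_{k_n}}$ with $\l^\ast_{k_n} \to \l^\ast$, and then $|g(\l^\ast)| e^{-\Psi^\ast(\l^\ast)} = \lim_n |g_{k_n}(\l^\ast_{k_n})| e^{-\Psi_{x_{k_n}}(\l^\ast_{k_n})}$, using the uniform convergence of $g_{k_n}$ on a neighborhood of $\l^\ast$ together with the uniform convergence $\Psi_{x_{k_n}} \to \Psi^\ast$ (Lemma \ref{w-l of L}(iv)); this limit is $0$ since $\sup_{\l^\ast} |g_{k_n}(\l^\ast)| e^{-\Psi_{x_{k_n}}(\l^\ast)} \to 0$. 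Hence $g$ is a nonzero function in $\mathcal{F}^\infty_{\Psi^\ast}$ vanishing on $\L^\ast$, contradicting the hypothesis that $\L^\ast$ is a uniqueness set for $\mathcal{F}^\infty_{\Psi^\ast}$.

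One technical point deserving care is the precise meaning of the pointwise evaluation along the compactwise-converging sequences: compactwise convergence $\L_{x_{k_n}} \rightharpoonup \L^\ast$ guarantees that the points of $\L^\ast$ in any compact set are limits of points of $\L_{x_{k_n}}$ (possibly with the usual boundary ambiguity, which is harmless since $\L$ is $\psi$-separated and hence, by Lemma \ref{w-l of L2}(i), $\L^\ast$ is $\psi^\ast$-separated, so one may always choose interior points). Combining this with the local uniform control — uniform convergence of $g_{k_n}$ on compacts and of $\Psi_{x_{k_n}}$ on compacts — makes the passage to the limit rigorous. The main obstacle, and the part requiring the most care, is ensuring that all the limiting objects are well-defined and that the normalization survives the limit: we need $\Psi^\ast$ to be a genuine subharmonic function with $\Delta \Psi^\ast$ a doubling measure (guaranteed by Lemma \ref{w-l of L}(iii)-(iv)), and we need the weak-limit function $g$ to stay in $\mathcal{F}^\infty_{\Psi^\ast}$ with norm controlled (guaranteed by Lemma \ref{w-l of f}(ii)). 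Once these pieces are in place, the contradiction is immediate and the lemma follows.
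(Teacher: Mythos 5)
Your argument is correct and follows the paper's proof essentially verbatim: assume $\L$ is not sampling, pick near-extremal functions $f_k$ and real points $x_k$, transplant via $T_{x_k}$, extract a weak limit $(\L^*,\Psi^*,\psi^*)$ and a locally uniform limit function in $\mathcal{F}^\infty_{\Psi^*}$ by Lemmas \ref{w-l of L} and \ref{w-l of f}, and contradict the uniqueness hypothesis. The only difference is that you spell out the two claims ($f^*\not\equiv 0$ and $f^*|\L^*=0$) whose details the paper explicitly omits, and you do so correctly.
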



\begin{proof}
Assume that $\L$ is not a sampling sequence for
$\mathcal{F}_{\Psi}^{\infty}$. This means that for every $k\in \N$ ,
there exists a function $f_k$  of
unit norm and   $x_k\in \R$ such that 
\[
\sup_{\l\in \L}|f_k(\l)|e^{-\Psi (\l)}\leq \frac{1}{k} \qquad \text{and} \qquad |f_k(x_k)|e^{-\Psi(x_k)}> 1/2.
\]
By lemma \ref{w-l of L},  there exists a subsequence
$\{x_{k_n}\}$ such that
$(\L_{x_{k_n}},\Psi_{x_{k_n}},\psi_{x_{k_n}})\rightharpoonup
(\L^*,\Psi^*,\psi^*)$. By Lemma \ref{w-l of f} (taking another
subsequence if needed), $T_{x_{k_n}}f_{k_n}$ converges uniformly
on compacts to a function $f^*\in \mathcal{F}^{\infty}_{\Psi^*}$. We
claim that on the one hand, $f^*\not \equiv 0$ and on the other hand,
$f^*|\L^*=0$. This contradicts our assumption and therefore
completes the proof.  We omit the details.
\end{proof}


We can now use the previous lemma to connect between the density of a
sequence and its sampling properties.


\begin{lemma}\label{finaly close to the end}
Let $\Psi$ and $\psi$ be as above, and suppose that the sequence $\L$ is $\psi$-separated.
If $D^{-}_{\psi}(\L)>1/2\pi$, then
$\L$ is a sampling sequence for $\mathcal{F}^{\infty}_{\Psi}$.
\end{lemma}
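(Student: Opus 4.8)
The plan is to reduce, via Lemma \ref{finaly using w-l thing}, to a uniqueness statement for the weak-limit spaces. That lemma asserts that $\L$ is sampling for $\mathcal{F}^\infty_\Psi$ provided every sequence $\L^\ast$ occurring in a weak limit $(\L^\ast,\Psi^\ast,\psi^\ast)\in W(\L,\Psi,\psi)$ is a uniqueness set for $\mathcal{F}^\infty_{\Psi^\ast}$. So fix such a triple. By Lemma \ref{w-l of L}, $\psi^\ast$ still satisfies all the standing hypotheses: $\mu^\ast=(\psi^\ast)'(t)\,\dif t$ is doubling with constant no worse than that of $\mu$, it satisfies the local doubling property \eqref{sec 4 phi prime}, and $\Psi^\ast=K[(\psi^\ast)']$ is subharmonic with $\Delta\Psi^\ast=\mu^\ast$, hence harmonic off $\R$. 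By Lemma \ref{w-l of L2}, $\L^\ast$ is $\psi^\ast$-separated and $D^-_{\psi^\ast}(\L^\ast)\ge D^-_\psi(\L)>1/2\pi$. Hence the lemma follows once we establish the self-contained claim: \emph{if $\psi^\ast$ is as above and $\L^\ast$ is any real sequence with $D^-_{\psi^\ast}(\L^\ast)>1/2\pi$, then $\L^\ast$ is a uniqueness set for $\mathcal{F}^\infty_{\Psi^\ast}$.}

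To prove the claim, suppose it fails: there is $f\in\mathcal{F}^\infty_{\Psi^\ast}$, $f\not\equiv 0$, with $f|_{\L^\ast}=0$. Since $\L^\ast\subset Z(f)$ we have $\#(\L^\ast\cap I)\le\#(Z(f)\cap I)$ for every interval $I\subset\R$, so it suffices to prove the zero-counting bound
\[
\#\bigl(Z(f)\cap I\bigr)\ \le\ \frac{1}{2\pi}\,\mu^\ast(I')+\littleoh\bigl(\mu^\ast(I)\bigr)\qquad\text{as }\mu^\ast(I)\to\infty,
\]
where $I\subset I'$ with $\mu^\ast(I')\le(1+\epsilon)\mu^\ast(I)$ (possible for each fixed $\epsilon>0$, and by the doubling of $\mu^\ast$ the passage $I\mapsto I'$ opens up a fixed amount of Euclidean room). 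Granting this, we divide by $\mu^\ast(I)$, take the infimum over intervals of a given $\mu^\ast$-measure, let $\mu^\ast(I)\to\infty$, and finally let $\epsilon\to 0$, obtaining $D^-_{\psi^\ast}(\L^\ast)\le 1/2\pi$, which contradicts the hypothesis.

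The counting bound is exactly the Jensen-type estimate underlying the Paley--Wiener density theorems and their Fock-space analogue, and I would obtain it by following \cite{marco_massaneda_ortega-cerda2003}, rewriting the argument in the metric $d_{\psi^\ast}$. The inputs are: $\log|f|\le\Psi^\ast$ on all of $\C$; $\Psi^\ast$ is harmonic off $\R$ with Riesz measure exactly $\mu^\ast$, and of finite order since the doubling of $\mu^\ast$ gives $(\psi^\ast)'(x)\lesssim 1+|x|^\gamma$; and Lemma \ref{technicallemma-2} together with \eqref{sec 4 phi prime}, which makes the Euclidean and $d_{\psi^\ast}$ geometries comparable on unit scales and attaches a Euclidean disc to $I'$. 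One applies Jensen's formula (the Riesz representation of $\log|f|$) on that disc, bounds $\log|f|$ above by $\Psi^\ast$ on the boundary circle, and uses a fixed lower bound for $\log|f|$ at a suitably chosen interior point; the crucial point is that, $\Psi^\ast$ being harmonic across the curved part of the boundary, the mean-value property of $\log|z-t|$ lets one compute its circular average exactly, so that the only surviving term with leading behaviour $\asymp\mu^\ast(I)$ is $\frac{1}{2\pi}$ times the $\mu^\ast$-mass of $I'$, with the \emph{sharp} coefficient. Pinning this constant down to $1/(2\pi)$ — rather than $C/(2\pi)$, which is all a crude Jensen estimate provides — is the main obstacle, and it is precisely where harmonicity of $\Psi^\ast$ off the real line is indispensable. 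Once the lemma is proved, combining it with Lemma \ref{tango} yields the sufficiency direction of Theorem \ref{theorem_sampling}.
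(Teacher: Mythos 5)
Your reduction to the uniqueness claim is exactly the paper's (Lemma \ref{finaly using w-l thing} plus Lemmas \ref{w-l of L} and \ref{w-l of L2}), but the way you propose to prove that claim has a genuine gap. You replace the uniqueness statement by the local zero-counting bound $\#(Z(f)\cap I)\le \frac{1}{2\pi}\mu^\ast(I')+\littleoh(\mu^\ast(I))$, required uniformly over intervals $I$ (you then take an infimum over all intervals of a given mass, so the $\littleoh$ cannot depend on the location of $I$). This is a much stronger assertion than uniqueness: it says that the zero set of \emph{every} nonzero $f\in\mathcal{F}^\infty_{\Psi^\ast}$ has \emph{upper} uniform density at most $1/2\pi$, and that is false in general — zero sets of bounded-type functions may cluster on sparse faraway intervals with excess proportional to $\mu^\ast(I)$, as long as the deficit is made up elsewhere; only lower-density-type information can be extracted. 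The sketch you give also cannot deliver it: (i) for an arbitrary interval far from the origin there is no uniform lower bound on $\log|f|$ at a "suitably chosen interior point" (the function may be exponentially small compared to $e^{\Psi^\ast}$ on regions of size comparable to $I$, which destroys the $(1+\epsilon)$ margin); and (ii) Jensen's formula on a disc of radius $R$ controls the \emph{weighted} sum $\sum_{z_k}\log(R/|z_k-c|)$, so zeros near the boundary count with negligible weight; to catch all zeros of $I$ with weight bounded below you must take $R$ a fixed multiple of $|I|$, and then the main term is $\frac{1}{2\pi}\mu^\ast$ of a dilated interval, which doubling only controls by a constant factor, not by $1+\epsilon$. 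Harmonicity of $\Psi^\ast$ off $\R$ fixes the exact evaluation of the circular mean of $\Psi^\ast$, but the loss just described is on the zero-counting side, which it does not touch.

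The paper avoids all of this by never localizing: after normalizing the weak limit so that $f(0)\neq 0$, it applies Jensen's formula centered at the origin, getting $\int_0^r |\L^\ast\cap[-t,t]|\,\frac{\dif t}{t}\le\frac{1}{2\pi}\int_0^{2\pi}\Psi^\ast(re^{i\theta})\,\dif\theta+\bigoh(1)$, and computes the right-hand side by Green's formula (using $\Delta\Psi^\ast=(\psi^\ast)'(x)\dif x$) as $\frac{1}{2\pi}\int_1^r\mu^\ast([-t,t])\,\frac{\dif t}{t}+\bigoh(1)$. The density hypothesis, via Lemma \ref{w-l of L2}, is then applied only to the symmetric intervals $[-t,t]$, giving $|\L^\ast\cap[-t,t]|\ge\frac{1+\epsilon}{2\pi}\mu^\ast([-t,t])$ for $t$ large; since both sides of Jensen's inequality are logarithmically averaged, the sharp constant $1/2\pi$ survives, and the divergence of $\int^r\mu^\ast([-t,t])\,\frac{\dif t}{t}$ yields the contradiction. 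If you want to salvage your write-up, replace the local counting claim by this log-averaged comparison anchored at a point where the weak-limit function does not vanish; as it stands, the key step of your argument is unproved and, in its uniform form, not true.
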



\begin{proof}
By Lemma \ref{finaly using w-l thing}, it is enough to show that if
$(\L^*,\Psi^*,\psi^*)\in W(\L,\Psi,\psi)$, and
 $f\in \mathcal{F}_{\Psi^*}^{\infty}$ satisfies $f|\L^*=0$, then $f\equiv 0$.
 Assume to the contrary
that there exists such $f$ which is non-zero. We may
assume, without loss of generality, that $f(0)\neq 0$ (otherwise
we translate   the setting as above). Choose $r>0$, by
Jensen's formula applied to $f$, we get
\[
\int_0^r\frac{\abs{\L^*\cap [-t,t]}}{t}dt\leq
 \frac{1}{2\pi}\int_0^{2\pi}\Psi^*(re^{i\theta})d\theta
+\textrm{O}(1).
\]
Since
$\Delta(\Psi^*(rz))=r(\psi^*)'(rx) \dif x$, Greens formula yields that the integral on the right-hand side of
the last expression is equal to
\begin{align*}
\frac{1}{2\pi}r\int_{-1}^{1}\log\frac{1}{\abs{x}}\cdot(\psi^*)'(rx)dx
&=
\frac{1}{2\pi}\int_{1\leq |t|\leq
r}\frac{\psi^*(t)}{\abs{t}}dt+\textrm{O}(1) \\
&=
\frac{1}{2\pi}\int_1^r\frac{\mu^*([-t,t])}{t}dt+\textrm{O}(1).
\end{align*}
This contradicts our assumption about the density of $\L$, which,
by Lemma \ref{w-l of L2}, implies that there exists some
$\epsilon>0$, so that for any $t$ big enough,
\[
\frac{\abs{\L^*\cap [-t,t]}}{\mu^*([-t,t])}\geq
\frac{1+\epsilon}{2\pi}.
\]
This ends the proof.
\end{proof}


We are now ready to prove the main result.

\begin{proof}[Proof of Theorem \ref{theorem_sampling}]
Let $\Phi$ be defined by \eqref{usual psi definition},  and recall that $\Delta\Phi = 2\phi'(x) \dif x$. Since
$D^{-}_{2\phi}(\L)>\frac{1}{2\pi}$, there exists an
$\epsilon>0$  small enough for
$D^{-}_{(1+\epsilon)2\phi}(\L)>\frac{1}{2\pi}$ to hold. Set
$\Psi=(1+\epsilon)\Phi$, and observe that  $\Delta \Psi =
 (1+\epsilon)2\phi'(x) \dif x$. So, by Lemma \ref{finaly
close to the end}, the sequence $\L$ is   sampling   for
$\mathcal{F}^{\infty}_{\Psi}$. The result now follows from Lemma \ref{tango}.
\end{proof}


\section*{Acknowledgement}

    The authors would like to thank Joaquim Ortega-Cerd\`a, Kristian Seip, Alexandru Aleman and Anton Baranov for some helpful conversations.


\bibliographystyle{amsplain}

\begin{thebibliography}{10}

\bibitem{aleksandrov1994simple}
A.~B. Aleksandrov, \emph{A simple proof of the {V}ol\cprime
  berg-{T}reil\cprime\ theorem on the embedding of covariant subspaces of the
  shift operator}, Zap. Nauchn. Sem. S.-Peterburg. Otdel. Mat. Inst. Steklov.
  (POMI) \textbf{217} (1994), no.~Issled. po Linein. Oper. i Teor. Funktsii.
  22, 26--35, 218.

\bibitem{aleksandrov1999}
\bysame, \emph{Embedding theorems for coinvariant subspaces of the shift
  operator. {II}}, Zap. Nauchn. Sem. S.-Peterburg. Otdel. Mat. Inst. Steklov.
  (POMI) \textbf{262} (1999), no.~Issled. po Linein. Oper. i Teor. Funkts. 27,
  5--48, 231.

\bibitem{baranov2005bernstein}
Anton~D. Baranov, \emph{Bernstein-type inequalities for shift-coinvariant
  subspaces and their applications to {C}arleson embeddings}, J. Funct. Anal.
  \textbf{223} (2005), no.~1, 116--146.

\bibitem{baranov2005stability}
\bysame, \emph{Stability of bases and frames of reproducing kernels in model
  spaces}, Ann. Inst. Fourier (Grenoble) \textbf{55} (2005), no.~7, 2399--2422.

\bibitem{baranov_dyakonov2009preprint}
Anton~D. Baranov and Konstantin Dyakonov, \emph{The {F}eichtinger conjecture
  for reproducing kernels in model subspaces}, available on www.arxiv.org,
  2009.

\bibitem{belov_mengestie_seip2010}
Yurii Belov, Tesfa Mengestie, and Kristian Seip, \emph{Discrete hilbert
  transforms on sparse sequences}, arXiv:0912.2899v1, 2009.

\bibitem{beurling1989}
Arne Beurling, \emph{The collected works of {A}rne {B}eurling. {V}ol. 2},
  Contemporary Mathematicians, Birkh\"auser Boston Inc., Boston, MA, 1989,
  Harmonic analysis, Edited by L. Carleson, P. Malliavin, J. Neuberger and J.
  Wermer.

\bibitem{borichev_dhuez_kellay2007}
Alexander Borichev, Remi Dhuez, and Karim Kellay, \emph{Sampling and
  interpolation in large {B}ergman and {F}ock spaces}, J. Funct. Anal.
  \textbf{242} (2007), no.~2, 563--606.

\bibitem{borichev_lyubarskii2010}
Alexander Borichev and Yurii Lyubarskii, \emph{Riesz bases of reproducing
  kernels in {F}ock-type spaces}, J. Inst. Math. Jussieu \textbf{9} (2010),
  no.~3, 449--461.

\bibitem{carleson1962annals}
Lennart Carleson, \emph{Interpolations by bounded analytic functions and the
  {C}orona problem}, Ann. of Math. \textbf{76} (1962), no.~3, 547--559.

\bibitem{cohn1982}
Bill Cohn, \emph{Carleson measures for functions orthogonal to invariant
  subspaces}, Pacific J. Math. \textbf{103} (1982), no.~2, 347--364.

\bibitem{cohn1986}
William~S. Cohn, \emph{Carleson measures and operators on star-invariant
  subspaces}, J. Operator Theory \textbf{15} (1986), no.~1, 181--202.

\bibitem{deBranges1968}
Louis de~Branges, \emph{Hilbert spaces of entire functions}, Prentice-Hall
  Inc., Englewood Cliffs, N.J., 1968.

\bibitem{duren_schuster2004}
Peter Duren and Alexander Schuster, \emph{Bergman spaces}, Mathematical Surveys
  and Monographs, vol. 100, American Mathematical Society, Providence, RI,
  2004.

\bibitem{havin_mashreghi2003}
Victor Havin and Javad Mashreghi, \emph{Admissible majorants for model
  subspaces of {$H^2$}. {I}. {S}low winding of the generating inner function},
  Canad. J. Math. \textbf{55} (2003), no.~6, 1231--1263.

\bibitem{jaffard1991}
S.~Jaffard, \emph{A density criterion for frames of complex exponentials},
  Michigan Math. J. \textbf{38} (1991), no.~3, 339--348.

\bibitem{lyubarskii_seip2002}
Yurii~I. Lyubarskii and Kristian Seip, \emph{Weighted {P}aley-{W}iener spaces},
  J. Amer. Math. Soc. \textbf{15} (2002), no.~4, 979--1006 (electronic).

\bibitem{macdonald1995}
I.~G. Macdonald, \emph{Symmetric functions and {H}all polynomials}, second ed.,
  Oxford Mathematical Monographs, The Clarendon Press Oxford University Press,
  New York, 1995, With contributions by A. Zelevinsky, Oxford Science
  Publications.

\bibitem{marco_massaneda_ortega-cerda2003}
Nicolas Marco, Xavier Massaneda, and Joaquim Ortega-Cerd{\`a},
  \emph{Interpolating and sampling sequences for entire functions}, Geom.
  Funct. Anal. \textbf{13} (2003), no.~4, 862--914.

\bibitem{mastroianni_totik2000}
Giuseppe Mastroianni and Vilmos Totik, \emph{Weighted polynomial inequalities
  with doubling and {$A_\infty$} weights}, Constr. Approx. \textbf{16} (2000),
  no.~1, 37--71.

\bibitem{mengestie_thesis}
Tesfa~Y. Mengestie, Doctoral dissertation, Norwegian University of Science and
  Technology, in preparation.

\bibitem{ortega-cerda2002}
Joaquim Ortega-Cerd{\`a}, \emph{Multipliers and weighted
  {$\overline\partial$}-estimates}, Rev. Mat. Iberoamericana \textbf{18}
  (2002), no.~2, 355--377.

\bibitem{ortega-cerdaPRIVATE}
\bysame, private communication, 2010.

\bibitem{ortega-cerda_seip1998}
Joaquim Ortega-Cerd{\`a} and Kristian Seip, \emph{Beurling-type density
  theorems for weighted {$L^p$} spaces of entire functions}, J. Anal. Math.
  \textbf{75} (1998), 247--266.

\bibitem{ramanathan_steger1995}
Jayakumar Ramanathan and Tim Steger, \emph{Incompleteness of sparse coherent
  states}, Appl. Comput. Harmon. Anal. \textbf{2} (1995), no.~2, 148--153.

\bibitem{seip1992}
Kristian Seip, \emph{Density theorems for sampling and interpolation in the
  {B}argmann-{F}ock space. {I}}, J. Reine Angew. Math. \textbf{429} (1992),
  91--106.

\bibitem{Seip1995}
\bysame, \emph{On the connection between exponential bases and certain related
  sequences in {$L^2(-\pi,\pi)$}}, J. Funct. Anal. \textbf{130} (1995), no.~1,
  131--160.

\bibitem{seip2004book}
Kristian Seip, \emph{Interpolation and {S}ampling in {S}paces of {A}nalytic
  {F}unctions}, University Lecture Series, vol.~33, American Mathematical
  Society, Providence, R. I., 2004.


\bibitem{seip_wallsten1992}
Kristian Seip and Robert Wallst{\'e}n, \emph{Density theorems for sampling and
  interpolation in the {B}argmann-{F}ock space. {II}}, J. Reine Angew. Math.
  \textbf{429} (1992), 107--113.

\bibitem{shapiro_shields1961}
Harold~S. Shapiro and Allen~L. Shields, \emph{On some interpolation problems
  for analytic functions}, Amer. J. Math \textbf{83} (1961), 513--532.

\bibitem{volberg_treil1988}
A.~L. Vol{\cprime}berg and S.~R. Treil{\cprime}, \emph{Embedding theorems for
  invariant subspaces of the inverse shift operator}, Zap. Nauchn. Sem.
  Leningrad. Otdel. Mat. Inst. Steklov. (LOMI) \textbf{149} (1986), no.~Issled.
  Linein. Teor. Funktsii. XV, 38--51, 186--187.

\bibitem{young1980}
Robert~M. Young, \emph{An {I}ntroduction to {N}onharmonic {F}ourier {S}eries},
  first ed., Academic Press Inc., New York, NY, 1980.

\end{thebibliography}
\def\cprime{$'$} \def\cprime{$'$} \def\cprime{$'$}
\providecommand{\bysame}{\leavevmode\hbox to3em{\hrulefill}\thinspace}
\providecommand{\MR}{\relax\ifhmode\unskip\space\fi MR }
\providecommand{\MRhref}[2]{%
  \href{http://www.ams.org/mathscinet-getitem?mr=#1}{#2}
}
\providecommand{\href}[2]{#2}

\end{document}